\documentclass[11pt]{amsart}
\usepackage[margin=1.1in]{geometry}
\usepackage{amsmath,amssymb,amsthm}
\usepackage{amscd}
\usepackage{needspace}
\usepackage{xcolor}
\usepackage[colorlinks=true,linkcolor=blue!60!black,citecolor=blue!60!black]{hyperref}

\newtheorem{theorem}{Theorem}[section]
\newtheorem*{thmArep}{Theorem~A}
\newtheorem*{thmBrep}{Theorem~B}
\newtheorem*{thmCrep}{Theorem~C}
\newtheorem*{thmDrep}{Theorem~D}
\newtheorem*{conjrep}{Conjecture}
\newtheorem{proposition}[theorem]{Proposition}
\newtheorem{lemma}[theorem]{Lemma}
\newtheorem{corollary}[theorem]{Corollary}
\newtheorem{conjecture}[theorem]{Conjecture}
\theoremstyle{definition}
\newtheorem{definition}[theorem]{Definition}
\newtheorem{remark}[theorem]{Remark}
\newtheorem{example}[theorem]{Example}

\newcommand{\Z}{\mathbb{Z}}
\newcommand{\Q}{\mathbb{Q}}
\newcommand{\F}{\mathbb{F}}
\newcommand{\HH}{\mathbb{H}}
\newcommand{\id}{\mathrm{id}}
\newcommand{\Sp}{\mathrm{Sp}}
\newcommand{\SU}{\mathrm{SU}}
\newcommand{\U}{\mathrm{U}}

\newcommand{\loops}{\Omega}

\title[BV rigidity in the string topology of compact Lie groups]{BV
rigidity in the string topology of compact Lie groups}

\author{Mukilraj K}
\address{Department of Mathematics and Statistics, Indian
Institute of Science Education and Research Kolkata, Mohanpur,
West Bengal, India - 741 246}
\email{mk21rs045@iiserkol.ac.in}

\subjclass[2020]{Primary 55P50; Secondary 55P35, 57T10, 55P62, 16E40}
\keywords{String topology, Batalin-Vilkovisky algebra, free loop
space, compact Lie groups, symplectic group, homology suspension,
Hochschild cohomology, rational homotopy type}

\begin{document}

\begin{abstract}
Let $G$ be a connected compact Lie group and $R$ a commutative ring
over which $H_*(G;R)$ is an exterior algebra on odd primitive
generators. We prove a rigidity theorem: the Batalin-Vilkovisky (BV)
algebra $\HH_*(LG;R)$ of Chas and Sullivan on the homology of the
free loop space is that of the product of odd spheres with the
classical exponents of $G$, tensored with the group ring of the
torsion of $\pi_1(G)$. Conversely, over an integral
domain the graded algebra $\HH_*(LG;R)$ already determines the
exponents and the group ring, hence the BV algebra. The isomorphism
with the product of spheres is algebraic: for $\Sp(n)$, $n\geq2$,
and $\SU(n)$, $n\geq3$, no map of spaces induces it over $\Z$. If
moreover $G$ is simply connected, then $\HH_*(LG;R)$, as a BV
algebra, is the Hochschild cohomology of $H^*(G;R)$ with the BV
operator induced by Poincar\'e duality.
\end{abstract}

\maketitle
\tableofcontents

\section{Introduction and statement of results}\label{sec:intro}

Let $M$ be a based closed oriented manifold of dimension $d$, let
$S^1=\mathbb{R}/\Z$ denote the circle, and let
$LM=\mathrm{Map}(S^1,M)$ be the free loop space, with
$\loops M\subset LM$ denoting the loops mapping $0$ to the
basepoint. In their seminal paper \cite{CS}, Chas and Sullivan
showed that the shifted homology $\HH_*(LM)=H_{*+d}(LM)$, called
the \emph{loop homology} of $M$, is a Batalin-Vilkovisky (BV)
algebra: the loop product combines the intersection of cycles
inside $M$ with the concatenation of loops, and the BV operator
$\Delta$ of degree one is induced by the rotation of loops.

For any Lie group $G$, right translation decomposes the free loop
space as $LG\cong\loops G\times G$. For compact $G$, Hepworth
\cite{Hepworth} uses this decomposition to determine the BV
algebra $\HH_*(LG)$ for every compact Lie group. The loop product
is the product of the Pontryagin ring $H_*(\loops G)$ with the
intersection ring $\HH_*(G)$,
\[
\HH_*(LG)\;\cong\;H_*(\loops G)\otimes\HH_*(G)
\]
as graded algebras, and the BV operator $\Delta$ is given in terms
of the coproduct of $H_*(\loops G)$, the homology suspension and
the Pontryagin action (Theorem~\ref{thm:hepworth1}). When
$H_*(G)$ is a Hopf algebra, choose a basis $p_1,\dots,p_n$ of its
odd primitives and the dual classes $p^i\in H^*(G)$. Then
\[
\Delta\;=\;\sum_{i=1}^{n}\partial_i\otimes\delta_i,
\qquad
\partial_i(a)=\sum\big\langle p^i,\sigma(a_{(1)})\big\rangle\,a_{(2)},
\qquad
\delta_i(x)=p_i\,x,
\]
where $\psi(a)=\sum a_{(1)}\otimes a_{(2)}$ is the coproduct of
$H_*(\loops G)$, $\sigma\colon H_*(\loops G)\to H_{*+1}(G)$ is the
homology suspension, and $\delta_i$ is the Pontryagin action of
$H_*(G)$ on $\HH_*(G)$; both $\partial_i$ and $\delta_i$ are
derivations (Proposition~\ref{prop:hepworth3}; see
Section~\ref{sec:hepworth} for details). For a large class of
Lie groups, $H_*(\loops G)$ is a polynomial ring on generators
$b_1,\dots,b_n$ which the homology suspension maps bijectively
onto the basis of odd primitives, $\sigma(b_k)=p_k$. With this
assumption the derivation
$\partial_k$ is determined by its values on the generators,
\[
\partial_k(b_l)\;=\;\delta_{kl}+\gamma_{kl},
\qquad
\gamma_{kl}\in H_{|b_l|-|b_k|}(\loops G).
\]
We call the $\gamma_{kl}$ the \emph{mixing terms} of the
presentation (Definition~\ref{def:mixing}). They are the terms of
$\Delta$ pairing generators of different degrees, $b_l$ with $p_k$
for $|b_l|>|b_k|$, and $\Delta$ is a pure tensor exactly when they
all vanish, as they do for primitive generators, where
$\psi(b_l)=b_l\otimes1+1\otimes b_l$ gives
$\partial_k(b_l)=\langle p^k,\sigma(b_l)\rangle=\delta_{kl}$.

Hepworth computed $\HH_*(L\,\mathrm{SO}(n))$ over $\Q$ and $\F_2$
\cite[Theorems~4 and~5]{Hepworth}: over $\Q$, $\partial_i$ sends
the generator $\alpha_j$ to $\alpha_{j-2i+1}$ for $j\geq2i-1$.
Tamanoi computed $\HH_*(L\SU(n{+}1);\Z)$ by a different method
\cite[Theorem~A]{Tamanoi}: in Hepworth's form,
$\Delta=\sum_{\ell}D_{2\ell}\otimes\partial/\partial\alpha_{2\ell+1}$
with $D_{2\ell}=\sum_{k=\ell}^{n}e_{2k-2\ell}\,\partial/\partial e_{2k}$
and $e_0=1$, so $e_{2k}$ is sent to $e_{2k-2\ell}$ for $k>\ell$.
In his thesis, Kupers applied Hepworth's presentation to $\SU(n)$
and $\Sp(n)$ over $\Z$, to $G_2$ over $\Q$ and $\F_2$, and to
$F_4$ over $\Q$ \cite[Theorems~3.1, 4.1, 5.1 and~6.1]{Kupers}. For
$\Sp(n)$ the $k$-th summand of $\Delta$ sends
$z_{2l-1}\otimes\alpha_{4k-1}$ to a multiple of $z_{2(l-k)}$ for
$k<l$ (Remark~\ref{rem:kupers}), and for $G_2$ over $\Q$ it sends
$\sigma_{10}\otimes l_{-3}$ to $3\sigma_2^4$. Every one of these
presentations has nontrivial mixing terms.

Over $\Q$ the mixing is an artifact. For $\SU(n{+}1)$, Tamanoi
passes to primitive generators
$h_{2\ell}=e_{2\ell}+(\text{decomposables})$ of
$H_*(\loops\SU(n{+}1);\Q)$ and the BV algebra splits as
$\bigotimes_{\ell=1}^{n}\HH_*(LS^{2\ell+1};\Q)$, the loop homology
of the product of odd spheres $S^3\times S^5\times\cdots\times
S^{2n+1}$, which has the rational homotopy type of $\SU(n{+}1)$
\cite[Section~4 and Corollary~C]{Tamanoi}. For every simply
connected $G$ the same follows from formality and the theorem of
F\'elix and Thomas \cite{FelixThomas}: $\HH_*(LG;\Q)$ is the loop
homology of the product of odd spheres with the rational homotopy
type of $G$ (Remark~\ref{rem:rational}). Over $\Z$ the primitives
do not generate $H_*(\loops G)$: for $\SU(3)$ the
primitive in degree $4$ is $2e_4-e_2^2$, by
\cite[Theorem~2-1]{Tamanoi}, and for $\Sp(2)$ the primitive in
degree $6$ is $3z_3-2z_1^3$ (Remark~\ref{rem:degen}). We would
like to ask whether the mixing over $\Z$ is real, that is,
whether the integral BV algebra distinguishes $G$ from the product
of odd spheres with the same rational homotopy type, or from other
Lie groups.

For a large family of groups the mixing vanishes. We
call $(G,R)$ an \emph{exterior pair} if $H_*(G;R)$ is an exterior
algebra generated by primitive elements of odd degrees
$2m_1+1,\dots,2m_r+1$; the $m_i$ are then the exponents of $G$
(Theorem~\ref{thm:whichpairs}). For example, $(G,\Q)$ is always an
exterior pair, and $(G,\Z)$ is one precisely when $G$ is
diffeomorphic to a torus times a product of groups $\SU(n)$ and
$\Sp(n)$. For the full list of exterior pairs in other
coefficients see Corollary~\ref{thm:borel}. Outside exterior pairs, as for
$\mathrm{SO}(n)$ and $G_2$ over $\F_2$, $H_*(G;R)$ is not an
exterior algebra on odd primitives and the mixing terms of
Definition~\ref{def:mixing} are not defined.

To an exterior pair with exponents $m_1,\dots,m_r$ we associate
the product of odd spheres
\[
X_G\;=\;\prod_{i=1}^{r}S^{2m_i+1},
\]
an exponent $0$ giving a circle. Then $X_G$ has the homology of
$G$ over $R$ and the rational homotopy type of $G$. The following
theorem, the main result of this paper, computes the BV
algebra of $G$ in terms of $X_G$ and $\Gamma$, where $\Gamma$ is
the torsion subgroup of $\pi_1(G)$.

\phantomsection\label{intro:thmA}
\begin{thmArep}[BV rigidity; Theorems~\ref{thm:rigid} and~\ref{thm:rigidtorsion}]
Let $(G,R)$ be an exterior pair. There is an isomorphism of
BV algebras
\[
\HH_*(LG;R)\;\cong\;\HH_*(LX_G;R)\otimes R[\Gamma],
\]
with $R[\Gamma]$ the group ring of $\Gamma$, placed in degree
zero, and $\Delta_G=\Delta_{X_G}\otimes1$.
In particular $\HH_*(LG;\Q)\cong\HH_*(LX_G;\Q)\otimes\Q[\Gamma]$
for every connected compact Lie group $G$.
\end{thmArep}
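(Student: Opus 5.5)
The plan is to work entirely inside Hepworth's presentation $\HH_*(LG;R)\cong H_*(\loops G;R)\otimes\HH_*(G;R)$ with $\Delta=\sum_i\partial_i\otimes\delta_i$ (Theorem~\ref{thm:hepworth1}, Proposition~\ref{prop:hepworth3}). We then produce a new set of polynomial generators of $H_*(\loops G_0;R)$ for which all mixing terms vanish. Here $G_0$ denotes the identity component of $\loops G$ (equivalently, we first reduce to the universal cover of the semisimple part times a torus).

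\textbf{Step 1: reduce to the torsion-free case.} Since $\pi_0(\loops G)=\pi_1(G)$, we have $H_*(\loops G)\cong H_*(\loops_0 G)\otimes R[\pi_1 G]$ as Pontryagin rings. The free part $\Z^t$ of $\pi_1(G)$ together with the circle factors accounts for the $S^1$ factors of $X_G$, because $\HH_*(LS^1)=R[\Z]\otimes \HH_*(S^1)$. The torsion part contributes $R[\Gamma]$. On degree-zero components grouplike elements $g$ satisfy $\psi(g)=g\otimes g$ and $\sigma(g)=0$ in positive degree, so $\partial_i(g)=0$. Hence $\Delta$ acts as $\Delta\otimes 1$ on the $R[\Gamma]$ factor. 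This gives the tensor factor and the formula $\Delta_G=\Delta_{X_G}\otimes1$.

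\textbf{Step 2: primitive polynomial generators.} We need to show that for an exterior pair, $H_*(\loops_0G;R)$ is a polynomial algebra on generators $b_1,\dots,b_r$ with $|b_i|=2m_i$, $\sigma(b_i)=p_i$. We use the results stated earlier in Section~\ref{sec:hepworth} and Theorem~\ref{thm:whichpairs}; for example, $H_*(\loops G)$ is dual to a divided-power-free Hopf algebra by the Bott--Samelson/Eilenberg--Moore argument. The $b_i$ are generally not primitive, so the mixing terms $\gamma_{kl}$ may be nonzero. Rather than making the $b_i$ primitive (impossible over $\Z$, as Remark~\ref{rem:degen} shows), we construct an algebra automorphism $\Phi$ of $H_*(\loops_0 G)$, possibly combined with a change of the basis $p_i$ and of the dual $p^i$, that conjugates $\sum\partial_i\otimes\delta_i$ to the sphere operator $\sum \partial/\partial b_i\otimes\delta_i$.

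Concretely, we note that the $\partial_k$ are commuting derivations (each is the contraction of the coproduct with a primitive cohomology class). The matrix $\partial_k(b_l)=\delta_{kl}+\gamma_{kl}$ is unipotent upper triangular with respect to degree. We then solve for new generators $c_l$ with $\partial_k(c_l)=\delta_{kl}$ by induction on degree: set $c_l=b_l-\sum(\cdots)$ and kill $\gamma_{kl}$ by subtracting an element $y$ with $\partial_k y=\gamma_{kl}$ for all $k$ simultaneously. Such $y$ exists because commuting derivations with an invertible Jacobian on a polynomial ring over $R$ behave like the partial derivatives $\partial/\partial x_k$. This is a Poincaré-lemma (integrability) statement: the compatibility $\partial_j\gamma_{kl}=\partial_k\gamma_{jl}$ follows from commutativity. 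The integration must be done without dividing by integers.

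\textbf{Main obstacle.} The integration in Step 2 over $R=\Z$ is the crux, since naive antiderivatives introduce denominators, as in $3z_3-2z_1^3$. I would first try to exhibit the solution as the dual of a coalgebra statement. The $\partial_k$ are the cap actions of the $p^k$, so the subring $\bigcap_k\ker\partial_k=R$ and the action of $\bigwedge$-dual structure should exhibit $H_*(\loops_0G)$ as a free module over the divided-power dual. Degree counting with the Hopf-algebra structure then gives integral solutions. If that fails, I would verify it directly for $\SU(n)$ and $\Sp(n)$ using Tamanoi's and Kupers' formulas, since exterior pairs over $\Z$ reduce to these. Once the $c_l$ exist, $\Phi(b_l\mapsto c_l)$ identifies $\HH_*(LG)$ with $R[c_1,\dots,c_r]\otimes\Lambda(p_i)$ with $\Delta=\sum\partial/\partial c_i\otimes\delta_i$. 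This is exactly $\HH_*(LX_G)$ by Hepworth's formula applied to odd spheres, where the generators are primitive. The rational statement follows because every $(G,\Q)$ is an exterior pair.
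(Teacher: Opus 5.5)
\textbf{Step 2 is the gap.} Your Step~2 asks for something that does not exist over $\Z$, so the problem is not just integrating carefully.

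The derivation $\partial_k$ is contraction with the suspension class $\xi_k=\sigma^*(p^k)\in H^{2m_k}(\loops K;R)$, that is, $\langle\eta,\partial_k(a)\rangle=\langle\xi_k\eta,a\rangle$. Suppose $\partial_k(c)\in R$ for every $k$. Then $c$ pairs to zero with every $\xi_k\eta$ with $|\eta|>0$. Rationally the $\xi_k$ are primitive and dual to the indecomposables $y_k$, so they are polynomial generators of $H^*(\loops K;\Q)$. Hence $c$ annihilates all decomposables and is primitive. Generators $c_l$ with $\partial_k(c_l)=\delta_{kl}$ would therefore be primitive polynomial generators. That is exactly what you yourself note is impossible over $\Z$ (Remark~\ref{rem:degen}). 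Changing the basis $p_i$ does not help, since it leaves the span of the $\partial_k$ unchanged.

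Here is a concrete failure. For $\SU(3)$ one has $\partial_1=\partial/\partial e_2+e_2\,\partial/\partial e_4$ and $\partial_2=\partial/\partial e_4$ on $\Z[e_2,e_4]$. The kernel of $\partial_1$ in degree $4$ is spanned by $2e_4-e_2^2$, on which $\partial_2$ equals $2$. So the correction $y$ with $\partial_1y=e_2$ and $\partial_2y=0$ is forced to be $y=\tfrac12e_2^2$. For $\Sp(2)$ the forced correction is $\tfrac23z_1^3$. Commuting derivations with unitriangular Jacobian need not be integrally conjugate to coordinate partials. Neither a Poincar\'e lemma, nor the divided-power idea, nor a case check for $\SU(n)$ and $\Sp(n)$ can produce the $c_l$. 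Indeed, for $\SU(3)$ and $\Sp(2)$ no isomorphism of the product form $\Phi\otimes\Psi$ can carry $\Delta$ to the pure tensor.

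\textbf{The missing idea.} Let the isomorphism mix the two tensor factors: keep the polynomial generators, and absorb the mixing into the odd generators with coefficients in the polynomial ring. The paper's shear (Proposition~\ref{thm:shear}) sends $b_k\mapsto u_k$ and $\alpha_k\mapsto d_k+\sum_l\gamma_{kl}(u)\,d_l$. This map is $R[u]$-linear and unitriangular on the odd generators, hence a ring isomorphism with no denominators. Compatibility with $\Delta$ is checked on generators and products of two, via the seven-term relation. The check uses two facts:
\begin{enumerate}
\item $\gamma_{km}$ cannot involve $u_m$ for degree reasons, so $\sum_m\partial c_{km}/\partial u_m=0$;
\item $\partial_k\gamma_{jn}=\partial_j\gamma_{kn}$, which follows from $\Delta^2=0$.
\end{enumerate}
Your integrability condition is exactly the second fact, but in the paper it only has to be verified, never integrated.

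\textbf{Two smaller points.} First, $\HH_*(LX_G;R)$ is not given by Hepworth's formula, since $X_G$ is in general not a Lie group. It comes from Menichi's computation for a single odd sphere together with the product formula (Proposition~\ref{prop:spheres}, Lemma~\ref{lem:product}). Second, your Step~1 is essentially right, but grouplikeness of $g$ is not enough. You also need $\sigma(gb)=\sigma(b)$ for $|b|>0$ on the non-identity components (Lemma~\ref{lem:suspcomp}). You also need the comparison of the intersection rings of $K$ and $K/\Gamma$, including the factor $|\Gamma|^{-1}$ (Lemma~\ref{lem:intquot}).
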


\noindent The right-hand side is computed by Menichi
\cite{Menichi} and stated in Proposition~\ref{prop:spheres}. The proof trivializes the mixing on $K$, the
compact factor of the universal cover of $G$
(Theorem~\ref{thm:rigid}), and the covering
$K\to K/\Gamma$ onto the semisimple part of $G$ then contributes
the factor $R[\Gamma]$ (Theorem~\ref{thm:rigidtorsion}). The isomorphism is in
general not induced by a map of spaces. Over $\Z$, no map between
$G$ and $X_G$ induces it for $\Sp(n)$, $n\geq2$, or $\SU(n)$,
$n\geq3$ (Remark~\ref{rem:sharp}).

Conversely, over an integral domain the graded algebra
$\HH_*(LG;R)$ determines the exponents and $R[\Gamma]$:

\phantomsection\label{intro:thmB}
\begin{thmBrep}[Classification; Corollaries~\ref{cor:classify},
\ref{cor:torsionclass} and~\ref{cor:rationalpi1}]
Let $R$ be an integral domain and $(G,R)$, $(G',R)$ exterior
pairs. The following are equivalent:
\begin{enumerate}
\item $\HH_*(LG;R)\cong\HH_*(LG';R)$ as BV
algebras;
\item $\HH_*(LG;R)\cong\HH_*(LG';R)$ as graded $R$-algebras;
\item $G$ and $G'$ have the same exponents, and
$R[\Gamma]\cong R[\Gamma']$.
\end{enumerate}
When $\pi_1(G)$ and $\pi_1(G')$ are torsion-free, as they are
for $R=\Z$, condition (3) says $G\simeq_\Q G'$; when
$R\subseteq\Q$, it says $G\simeq_\Q G'$ and
$\pi_1(G)\cong\pi_1(G')$.
\end{thmBrep}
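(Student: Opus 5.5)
The plan is to prove the cycle of implications $(3)\Rightarrow(1)\Rightarrow(2)\Rightarrow(3)$. The implication $(1)\Rightarrow(2)$ holds by definition, since a BV isomorphism is in particular an isomorphism of graded algebras. For $(3)\Rightarrow(1)$ we use Theorem~A. If $G$ and $G'$ have the same exponents, then $X_G=X_{G'}$. Theorem~A then gives
\[
\HH_*(LG;R)\cong\HH_*(LX_G;R)\otimes R[\Gamma]\cong\HH_*(LX_{G'};R)\otimes R[\Gamma']\cong\HH_*(LG';R),
\]
with $\Delta=\Delta_X\otimes1$ on both ends. Here any algebra isomorphism $R[\Gamma]\cong R[\Gamma']$, placed in degree zero, is compatible with $\Delta_X\otimes 1$.

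The substance is $(2)\Rightarrow(3)$. We must recover the exponents and $R[\Gamma]$ from the graded algebra $A=\HH_*(LG;R)$. By Theorem~A and Menichi's computation (Proposition~\ref{prop:spheres}), $A\cong\bigotimes_i \HH_*(LS^{2m_i+1};R)\otimes R[\Gamma]$. For an odd sphere the loop homology is $\Lambda[a_i]\otimes R[u_i]$ with $|a_i|=-(2m_i+1)$ and $|u_i|=2m_i$. An exponent $0$ gives $S^1$, where $u$ is replaced by the Laurent generator $\Z$-grading in degree zero, so it contributes $R[t,t^{-1}]$. Thus
\[
A\cong \Lambda_R[a_1,\dots,a_r]\otimes R[u_i: m_i>0]\otimes R[\Z^{s}\times\Gamma],
\]
where $s$ is the number of zero exponents. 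We extract invariants in the following order.

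First we read off the negative-degree part. The lowest degree in which $A$ is nonzero is $-\dim G=-\sum(2m_i+1)$, so this degree is an invariant. Next we take the degree-zero subalgebra $A_0$. Since all $u_i$ with $m_i>0$ have positive degree and all $a_i$ have negative degree, $A_0$ consists of $R[\Z^s\times\Gamma]$ plus products of $a$'s and $u$'s whose total degree is zero. This second kind of contribution is the main obstacle, because monomials like $a_iu_j^k$ can land in degree $0$. To handle it, we consider the nilradical $N$ of $A$ in the appropriate sense. Over an integral domain the $a_i$ are nilpotent and generate a nilpotent ideal $I=(a_1,\dots,a_r)$. The quotient $A/\sqrt{0}$ is $R[u_i]\otimes R[\Z^s\times\Gamma]$ modulo the nilradical of $R[\Gamma]$; it is reduced when the characteristic of $R$ does not divide $|\Gamma|$, and in general we quotient by the full nilradical. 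Then $(A/\sqrt0)_0=R[\Z^s\times\Gamma]/\sqrt0$, and its units modulo $R^\times$ recover $\Z^s$.

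The step I expect to be hardest is recovering $R[\Gamma]$ itself rather than its reduction. I would try to avoid the issue by working instead with $A/I'$, where $I'$ is the ideal generated by all elements of negative degree. Since every $a_i$ has negative degree and $R[\Z^s\times\Gamma]\otimes R[u]$ sits in nonnegative degrees, $A/I'\cong R[u_i]\otimes R[\Z^s\times\Gamma]$ exactly. Its degree-zero part is $R[\Z^s\times\Gamma]$, which is intrinsic.

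From $B:=R[\Z^s\times\Gamma]$ we recover $s$ by a rank or Krull-dimension count, and $R[\Gamma]$ as a quotient $B\otimes_{R[\Z^s]}R$. Making this quotient canonical needs care. I would instead compare $B\cong B'$ and use that over an integral domain $R[\Z^s\times\Gamma]\cong R[\Z^{s'}\times\Gamma']$ forces $s=s'$ and $R[\Gamma]\cong R[\Gamma']$. The argument is to compare the torsion of the unit groups and the group-like trivial units, and to tensor down along the augmentation ideal; this is the content of Corollary~\ref{cor:torsionclass}.

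The positive exponents then come from the Poincaré series. The graded ring $A/I'$ is free over $B$ with Hilbert series $\prod_{m_i>0}(1-t^{2m_i})^{-1}$, which determines the multiset $\{m_i>0\}$. Combined with $s$, this gives the full list of exponents.

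Finally, the closing remarks of the statement come from the classification of exterior pairs and from Theorem~\ref{thm:whichpairs}. Equal exponents is equivalent to equal rational homotopy type, because simply connected compact groups are rationally products of odd spheres. When $R\subseteq\Q$, the group rings $\Q[\Gamma]\cong\Q[\Gamma']$ together with the torus rank are used to conclude $\pi_1(G)\cong\pi_1(G')$, via Corollary~\ref{cor:rationalpi1}.
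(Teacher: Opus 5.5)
Your steps (3)$\Rightarrow$(1)$\Rightarrow$(2) are fine. In (2)$\Rightarrow$(3), two of your choices are valid intrinsic substitutes for the paper's nilradical and indecomposables:
\begin{itemize}
\item the ideal $I'$ generated by the negative-degree elements, which equals the ideal of the odd generators, so $A/I'\cong B[u]$ with $B=(A/I')_0=R[\Gamma][t_1^{\pm1},\dots,t_s^{\pm1}]$;
\item the Hilbert series of $A/I'$ over $B$.
\end{itemize}
You also need no dimension count for $s$: your observation that $-\dim G$ is the lowest nonzero degree gives $s=\dim G-\sum_{m_i>0}(2m_i+1)$.

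The gap is the step that carries the torsion content of the theorem, namely recovering $R[\Gamma]$ from the abstract $R$-algebra $B$. You give no argument there and end by citing Corollary~\ref{cor:torsionclass}, which is precisely the statement being proved. The devices you mention are not invariant under an arbitrary $R$-algebra isomorphism $B\cong B'$. For $R=\Z[1/2]$, $\Gamma=\Z/2$, $s=1$ one has $B\cong R[t^{\pm1}]\times R[t^{\pm1}]$. The automorphism that is the identity on the first factor and $t\mapsto t^{-1}$ on the second carries $t$ to the unit $t\,\tfrac{1+g}{2}+t^{-1}\tfrac{1-g}{2}$. So neither the subring $R[\Z^s]$, nor its augmentation ideal, nor the set of trivial units $R^\times\cdot(\Z^s\times\Gamma)$ is determined by the ring structure. (For a related reason, your aside that the units of $(A/\sqrt0)_0$ modulo $R^\times$ give $\Z^s$ is false when $\Gamma\neq1$: they contain $\Gamma$.)

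The missing idea is to base change to the fraction field $F$ of $R$. Since $|\Gamma|$ is a unit in $R$ (Lemma~\ref{lem:gammaunit}), $F[\Gamma]\cong\prod_iL_i$ with each $L_i$ a finite field extension of $F$. Hence $B\subset B\otimes_RF\cong\prod_iL_i[t_1^{\pm1},\dots,t_s^{\pm1}]$.
\begin{itemize}
\item Each $L_i$ is algebraically closed in $L_i(t_1,\dots,t_s)$, so an element of $B$ integral over $R$ is constant in every factor. It therefore lies in $B\cap F[\Gamma]=R[\Gamma]$.
\item Conversely, every element of $R[\Gamma]$ is integral over $R$, since $R[\Gamma]$ is a finite $R$-module.
\end{itemize}
Thus $R[\Gamma]$ is the integral closure of $R$ in $B$, an $R$-subalgebra defined by the ring structure alone. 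Any $R$-algebra isomorphism $B\cong B'$ then restricts to $R[\Gamma]\cong R[\Gamma']$; this is the paper's argument.

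Your torsion-unit idea also works once it is made intrinsic in the same way. The units of $L_i[t^{\pm1}]$ are $L_i^\times\cdot t^{\Z^s}$, so the torsion units of $B$ lie in $R[\Gamma]$. They also contain $\Gamma$, hence span $R[\Gamma]$ over $R$.

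Finally, for $R\subseteq\Q$ the input is Perlis and Walker's theorem that $\Q[\Gamma]\cong\Q[\Gamma']$ forces $\Gamma\cong\Gamma'$ for finite abelian groups, applied after tensoring with $\Q$. Citing Corollary~\ref{cor:rationalpi1} instead is circular, since it is again part of the statement.
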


\noindent So the rational BV algebra of $LG$ determines
$\pi_1(G)$, but the rational homotopy type of $G$ does not, since
a finite covering is a rational equivalence. Here $\Gamma$ enters
through $H_0(LG;\Q)=\Q[\pi_1(G)]$, and $\Q[\Gamma]$ determines
$\Gamma$ \cite{PerlisWalker}. This is not true when we work over
$\F_p$, where the group ring need not determine the group: for
example, $\F_p[\Z/4]\cong\F_p[\Z/2\times\Z/2]$ if
$p\equiv1\bmod4$.

The isomorphism of \hyperref[intro:thmA]{Theorem~A} can also be
viewed algebraically. Over $\Q$, as we saw above, formality and
the theorem of F\'elix and Thomas \cite{FelixThomas} identify
$\HH_*(LG;\Q)$ with the Hochschild cohomology of $H^*(G;\Q)$ as
BV algebras for simply connected $G$, and the isomorphism is
forced. Formality alone does not suffice over $\Z$ or $\F_p$,
as Menichi's $S^2$ demonstrates over $\F_2$
\cite[Corollary~30]{Menichi}. Still, for exterior pairs the
identification holds over $R$:

\phantomsection\label{intro:thmD}
\begin{thmDrep}[Hochschild cohomology model; Theorem~\ref{thm:hochmodel}]
Let $(G,R)$ be an exterior pair with $G$ simply connected. Then
\[
\HH_*(LG;R)\;\cong\;HH^*\big(H^*(G;R);H^*(G;R)\big)
\]
as BV algebras, the right-hand side carrying
the cup product and the BV operator induced by Connes' operator
through Poincar\'e duality $H^*(G;R)\cong H^*(G;R)^\vee$.
\end{thmDrep}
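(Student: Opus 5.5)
The plan is to reduce both sides to the product of odd spheres $X_G$ and compare them there. Since $G$ is simply connected, $\Gamma$ is trivial, and \hyperref[intro:thmA]{Theorem~A} (Theorem~\ref{thm:rigid}) gives an isomorphism of BV algebras $\HH_*(LG;R)\cong\HH_*(LX_G;R)$. Because $(G,R)$ is an exterior pair, $H^*(G;R)\cong\Lambda_R(x_1,\dots,x_r)=H^*(X_G;R)$ as Hopf algebras, with $|x_i|=2m_i+1$. The Poincar\'e duality isomorphisms $H^*\cong (H^*)^\vee$ match under this identification, since both are given by capping with the top class $x_1\cdots x_r$. The orientation sign can be fixed by ordering the generators consistently. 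It therefore suffices to prove
\[
\HH_*(LX_G;R)\;\cong\;HH^*(A;A),\qquad A=\Lambda_R(x_1,\dots,x_r),\ |x_i|\text{ odd},
\]
as BV algebras.

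\textbf{Step 1: the computation for $X_G$.} By Proposition~\ref{prop:spheres} (Menichi), $\HH_*(LX_G;R)$ is the tensor product of the BV algebras $\HH_*(LS^{2m_i+1};R)\cong\Lambda(x_i^\vee)\otimes R[u_i]$, where $|u_i|=2m_i$. The BV operator is $\Delta(u_i^k\,x_i^\vee)=k\,u_i^{k-1}$ up to the sign convention, and $\Delta$ vanishes on $u_i^k$. The loop product is the graded tensor product, because the Cartesian product of manifolds induces a BV tensor product (with Künneth valid since all groups are free).

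\textbf{Step 2: the computation of $HH^*(A;A)$.} I use the Koszul resolution of the exterior algebra on odd generators. This resolution is $A\otimes\Gamma_R[s x_1,\dots,s x_r]\otimes A$, with divided powers on the even suspended classes. Dualizing it gives $HH^*(A;A)\cong A\otimes R[t_1,\dots,t_r]$, a polynomial algebra on classes $t_i$ of (homological) degree $2m_i$, with zero differential because $A$ is graded commutative with odd generators. This identification holds multiplicatively for the cup product, and it is compatible with tensor products, $HH^*(A\otimes B)\cong HH^*(A)\otimes HH^*(B)$, since the resolutions are finite free.

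Next I transport Connes' operator $B$ on $HH_*(A;A)$ to $HH^*(A;A^\vee)\cong HH^*(A;A)$ via the duality $A\cong A^\vee$. For a single generator $A=\Lambda(x)$, $HH_*(A;A)=A\otimes\Gamma[sx]$ and $B(x\,\gamma_k(sx))=(k+1)\gamma_{k+1}(sx)$. Dualizing and applying the duality $x\leftrightarrow 1^\vee$ gives $\Delta(x^\vee t^k)=k\,t^{k-1}$, which matches Step~1 with $t=u$. Since $B$ is a derivation of the shuffle structure and behaves well on tensor products of algebras, the multi-generator case is the tensor product of these single-generator BV algebras. Matching $t_i\leftrightarrow u_i$ and $x_i\leftrightarrow x_i^\vee$ then gives the claimed isomorphism.

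\textbf{Main obstacle.} The hard step is justifying that the BV structure on $HH^*(A;A)$ induced through Poincar\'e duality is the tensor product of the one-generator structures. One must check that the duality isomorphism for $A\otimes B$ is the tensor product of the dualities, and that the bracket generated by $\Delta$ behaves accordingly. The worry is that a BV operator is only a second-order operator, so a tensor product need not have $\Delta=\Delta\otimes1+1\otimes\Delta$ automatically. I would handle this through Menichi's or Tradler's description of $\Delta$ as the dual of $B$ under the symmetric Frobenius structure. Here $B$ on the Hochschild complex of a tensor product is $B\otimes1+1\otimes B$ at the chain level, via the Eilenberg--Zilber shuffle map, and that map is compatible with $B$. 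Sign conventions for the odd generators and the orientation class need care throughout. Over a general $R$ no division is needed, because divided powers appear only on the homology side and dualize to honest polynomials. This is consistent with the integral formula $\Delta(x^\vee t^k)=k\,t^{k-1}$.
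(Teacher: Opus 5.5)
Your proposal follows the paper's proof. You reduce to $X_G$ via Theorem~\ref{thm:rigid}, compute the one-generator case from Connes' operator on the Hochschild chains of $\Lambda_R(x)$ with $|x|$ odd (your $B(x\gamma_k)=(k+1)\gamma_{k+1}$ dualizes to the paper's $\Delta(gf^k)=kf^{k-1}$), and pass to products by a K\"unneth theorem for the transported BV structure, which the paper quotes from the literature and you propose to prove directly.

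One correction to that last step: the shuffle map commutes with $B$ only up to a chain homotopy given by the cyclic shuffle map, not on the nose. Already for $a_0\otimes b_0$ in degree zero, $B\,sh$ and $sh\,(B\otimes1+1\otimes B)$ differ by $b\big(1[a_0\otimes1|1\otimes b_0]\big)$. Compatibility on homology is all you need, so the argument goes through.
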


\noindent For the classifying space $BG$, Kuribayashi and Menichi
\cite{KuribayashiMenichi} prove the analogous statement: for
almost any connected compact Lie group $G$ and $p$ odd or $p=0$,
the BV algebra $\HH^*(LBG;\F_p)$ is the Hochschild cohomology
$HH^*\big(H_*(G;\F_p);H_*(G;\F_p)\big)$, and over $\F_2$ this
fails for $\mathrm{SO}(3)$ and $G_2$; neither
$(\mathrm{SO}(3),\F_2)$ nor $(G_2,\F_2)$ is an exterior pair.

Since the isomorphism of \hyperref[intro:thmD]{Theorem~D} is
abstract, passing through $X_G$, we think a natural map would
realize it. The BV operator on the right-hand side of \hyperref[intro:thmD]{Theorem~D} comes from
Poincar\'e duality, the isomorphism of $H^*(G;R)$-bimodules
\[
\Theta\colon H^*(G;R)\to H^*(G;R)^\vee,\qquad
\Theta(a)(b)=\langle ab,[G]\rangle .
\]
On the cochains $C=C^*(G;R)$ the duality is a derived map.
Hochschild cohomology is an Ext group,
$HH^*(C;C^\vee)=\mathrm{Ext}^*_{C\otimes C^{\mathrm{op}}}(C,C^\vee)$,
so its classes are morphisms $C\to C^\vee$ in the derived
category of $C$-bimodules. Jones' isomorphism
$J\colon H_*(LG;R)\cong HH^*(C;C^\vee)$ \cite{Jones} gives one,
\[
\theta\;:=\;J(c_*[G]),
\]
for $c\colon G\to LG$ the inclusion of the constant loops; it has
degree $d=\dim G$ and induces $\Theta$ on cohomology
(Section~\ref{sub:hochmodel}).

\phantomsection\label{intro:conj}
\begin{conjrep}[Formality with duality; Conjecture~\ref{conj:pdformal}]
Let $(G,R)$ be an exterior pair with $G$ simply connected. There is an
$A_\infty$ quasi-isomorphism $H^*(G;R)\to C^*(G;R)$ carrying $\Theta$
to $\theta$.
\end{conjrep}

\noindent Such a map, composed with the Cohen-Jones isomorphism
\cite[Theorem~3]{CohenJones}, gives
\[
\HH_*(LG;R)\;\cong\;HH^*\big(C^*(G;R);C^*(G;R)\big)\;\cong\;
HH^*\big(H^*(G;R);H^*(G;R)\big),
\]
which realizes \hyperref[intro:thmD]{Theorem~D} once the Cohen-Jones isomorphism is one
of BV algebras: this holds over a field of characteristic zero
\cite{FelixThomas} and is conjectured in general
\cite{TradlerBV,MenichiBV}. We make the conjecture on three
grounds. \hyperref[intro:thmD]{Theorem~D} shows that the target
is the right BV algebra. Over $\Q$ the conjecture holds, since
$H^*(G;\Q)$ with its pairing is a Poincar\'e duality model of
$C^*(G;\Q)$ in the sense of F\'elix and Thomas
(Section~\ref{sub:hochmodel}). And the $A_\infty$ formality of
$C^*(G;R)$ alone, without the condition on $\Theta$, is
established for every exterior pair over a principal ideal domain
of characteristic zero in work in progress of the author; what
remains is to choose the formality map compatibly with the
duality.

We use $\HH_*(L\Sp(n);\Z)$ as the running example for the
rigidity phenomenon of \hyperref[intro:thmA]{Theorem~A}. The
ring and the form of $\Delta$ in Hepworth's presentation were
obtained by Kupers \cite[Theorem~4.1]{Kupers}, with the homology
suspension determined up to sign through the quaternionic Stiefel
manifolds, and the factor $2$ of the coproduct of Kono and
Kozima, which he quotes, is missing from his formula for
$\Delta$. We compute the suspension through the edge map of the
cobar spectral sequence, which fixes the sign, and
\hyperref[intro:thmC]{Theorem~C} restores the coefficient and
adds the Gerstenhaber bracket (Remark~\ref{rem:kupers}).

\phantomsection\label{intro:thmC}
\begin{thmCrep}[The BV algebra of $L\Sp(n)$; Theorem~\ref{thmC}]
There is an isomorphism of graded algebras
\begin{gather*}
\HH_*(L\Sp(n);\Z)\;\cong\;
\Z[z_1,z_3,\dots,z_{2n-1}]\otimes
\Lambda_{\Z}(\alpha_3,\alpha_7,\dots,\alpha_{4n-1}),\\
|z_{2k-1}|=4k-2,\qquad |\alpha_{4k-1}|=-(4k-1),
\end{gather*}
under which the BV operator is
\[
\Delta\;=\;\sum_{k=1}^{n}\ \partial_k\otimes
\frac{\partial}{\partial\alpha_{4k-1}},
\qquad
\partial_k\;=\;\frac{\partial}{\partial z_{2k-1}}
\;+\;2\sum_{l=k+1}^{n}z_{2(l-k)}\,
\frac{\partial}{\partial z_{2l-1}} .
\]
The induced Gerstenhaber bracket is a derivation in each variable,
determined by
\[
\{z_{2l-1},\alpha_{4k-1}\}=\delta_{kl}+2z_{2(l-k)},\qquad
\{z,z'\}=\{\alpha,\alpha'\}=0 .
\]
\end{thmCrep}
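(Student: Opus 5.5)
We will apply Hepworth's presentation (Theorem~\ref{thm:hepworth1} and Proposition~\ref{prop:hepworth3}) to $G=\Sp(n)$ with $R=\Z$. There are three inputs to assemble. The first is the Pontryagin ring $H_*(\loops\Sp(n);\Z)=\Z[z_1,z_3,\dots,z_{2n-1}]$, with $|z_{2k-1}|=4k-2$ and $z_{2k-1}$ the image of the generator of $H_{4k-2}(\loops\HH P^\infty)$ under the Bott/James-type map. The second is the intersection ring $\HH_*(\Sp(n);\Z)=\Lambda_\Z(\alpha_3,\dots,\alpha_{4n-1})$, which is Poincar\'e dual to $H^*(\Sp(n);\Z)=\Lambda(x_3,\dots,x_{4n-1})$. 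The third is the coproduct of $H_*(\loops\Sp(n))$, which by Kono--Kozima is
\[
\psi(z_{2l-1})=\sum_{i+j=l}z_{2i-1}\otimes z_{2j-1}\;+\;2\sum(\text{terms } z_{2(i)}\otimes z_{2j-1}\ \text{with}\ 2i+2j-1=2l-1),
\]
written in the paper's notation $z_{2m}$ for the even-indexed classes. These are polynomials in the $z$'s coming from $H_*(\loops\SU(2n))$ or $\loops\HH P^\infty$, with $z_0=1$. The graded algebra isomorphism then follows from the tensor decomposition $\HH_*(LG)\cong H_*(\loops G)\otimes\HH_*(G)$.

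To compute $\Delta$, we determine the derivations $\partial_k$ on generators via $\partial_k(b)=\sum\langle p^k,\sigma(b_{(1)})\rangle b_{(2)}$. This needs the homology suspension $\sigma(z_{2l-1})$, and the key claim is $\sigma(z_{2l-1})=p_{4l-1}$ exactly, with the sign fixed. We will get it from the edge homomorphism of the cobar (Eilenberg--Moore) spectral sequence $\mathrm{Cotor}^{H_*(\Sp(n))}(\Z,\Z)\Rightarrow H_*(\loops\Sp(n))$. There the generator of filtration one in degree $4l-2$ is the desuspension $[p_{4l-1}]$, and $z_{2l-1}$ reduces to it modulo decomposables. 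Compatibility with the fibration $\Sp(n-1)\to\Sp(n)\to S^{4n-1}$ pins down the sign inductively. Since $\sigma$ kills decomposables, only terms of $\psi(z_{2l-1})$ whose left factor is indecomposable can contribute. Those factors are $z_{2k-1}$, giving $\delta_{kl}$, and the $2z_{2k-1}\otimes z_{2(l-k)}$ piece from Kono--Kozima, giving $2z_{2(l-k)}$. Hence $\partial_k(z_{2l-1})=\delta_{kl}+2z_{2(l-k)}$ for $l>k$, and $0$ for $l<k$ by degree. Since $\partial_k$ is a derivation, it is the stated operator. With $\delta_k=\partial/\partial\alpha_{4k-1}$, which is the Pontryagin action of the primitive $p_{4k-1}$ on the exterior intersection ring up to the sign convention chosen for $\alpha$, this yields $\Delta=\sum_k\partial_k\otimes\partial/\partial\alpha_{4k-1}$.

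For the bracket, we use the BV identity $\{a,b\}=(-1)^{|a|}(\Delta(ab)-\Delta(a)b-(-1)^{|a|}a\Delta(b))$. Since $\Delta$ is a sum of products of commuting first-order operators in the separate tensor factors, $\Delta$ is second order, and the bracket is a biderivation determined on generators. $\Delta$ vanishes on each single $z$ and each single $\alpha$, and also on $z z'$ and $\alpha\alpha'$, because each term needs one $z$-derivative and one $\alpha$-derivative. So $\{z,z'\}=\{\alpha,\alpha'\}=0$, and $\{z_{2l-1},\alpha_{4k-1}\}=\pm\Delta(z_{2l-1}\alpha_{4k-1})=\partial_k(z_{2l-1})$, with the signs checked against the degree parity ($|z|$ even).

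The main obstacle is the coproduct coefficient together with the exact suspension. We must verify the factor $2$ in Kono--Kozima in our generator normalization. We must also confirm that $z_{2(l-k)}$ is an honest class: either an element of the polynomial ring expressed through the $z_{2j-1}$, or, as the notation suggests, coming from $\SU(2n)$ with even classes identified by $z_{2m}$ expressed in the $z$'s. We will first compute in $H_*(\loops\SU(2n))$, where Tamanoi's coproduct $\psi(e_{2m})=\sum e_{2i}\otimes e_{2m-2i}$ is clean, and then push forward along $\Sp(n)\to\SU(2n)$. A consistency check is the rank-$2$ primitive $3z_3-2z_1^3$ of Remark~\ref{rem:degen}.
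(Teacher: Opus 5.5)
Your route is the paper's: Hepworth's presentation, Kono--Kozima's polynomial generators and coproduct, and $\sigma$ on the generators from the edge map of the cobar spectral sequence, using that $\sigma$ kills decomposables and $1$. Then $\delta_k=\partial/\partial\alpha_{4k-1}$ on the intersection ring, $\partial_k(z_{2l-1})$ is read off from the terms of $\psi(z_{2l-1})$ with odd left factor, and the bracket comes from $\Delta(z_{2l-1}\alpha_{4k-1})=\partial_k(z_{2l-1})$, $|z|$ being even.

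The computation in your second paragraph is right. But the coproduct you display is not Kono--Kozima's. Its coefficients are exactly what the theorem is about (this is where Kupers' formula went wrong, Remark~\ref{rem:kupers}), so it must be quoted correctly. With $|z_m|=2m$:
\begin{itemize}
\item your first sum $z_{2i-1}\otimes z_{2j-1}$, $i+j=l$, has degree $4l-4\neq4l-2$;
\item your second sum has only even left factors.
\end{itemize}
So the term $2z_{2k-1}\otimes z_{2(l-k)}$ that you then use does not occur in your own display, whose only odd-left terms carry coefficient $1$. The correct statement is
\[
\psi(z_{2l-1})=z_{2l-1}\otimes1+1\otimes z_{2l-1}+2\sum_{\substack{i+j=2l-1\\ i,j\geq1}}z_i\otimes z_j ,
\]
which is symmetric, with coefficient $2$ on every mixed term. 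The convention must be $z_{2m}=0$ for $m\leq0$, not $z_0=1$: with $z_0=1$ the formula $\delta_{kl}+2z_{2(l-k)}$ would give $\partial_k(z_{2k-1})=3$ and $\{z_{2k-1},\alpha_{4k-1}\}=3$. You should also say why the even-left terms $2z_{2m}\otimes z_{2(l-m)-1}$ die: each $z_{2m}$ is decomposable, since every term of the recursion is a product of two positive-degree classes. Once Kono--Kozima is quoted correctly, there is nothing left to ``verify in your normalization'', and the detour through $\SU(2n)$ is unnecessary.

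Two smaller points. First, no sign has to be pinned down through $\Sp(n-1)\to\Sp(n)\to S^{4n-1}$. The paper simply sets $a_{4k-1}:=\sigma(z_{2k-1})$. This is a basis of the odd primitives because $z_{2k-1}\equiv\pm y_k$ modulo decomposables (the degrees being distinct) and $\sigma(y_k)=a_k$. Orienting by $a_3\cdots a_{4n-1}$ and defining $\alpha_{4k-1}$ from the dual basis then gives $a_{4k-1}\alpha_{4k-1}=[\Sp(n)]$, hence $\delta_k=\partial/\partial\alpha_{4k-1}$ on the nose, with no ``up to sign''. Second, $\loops\HH P^\infty\simeq S^3$ has no homology in degree $4k-2$, so your description of where $z_{2k-1}$ comes from is wrong, though nothing in your argument uses it.
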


\noindent Its mixing term has every coefficient equal to $2$.
We first exhibit the rigidity phenomenon for $\Sp(n)$ in
Example~\ref{ex:spshear}.

Section~\ref{sec:hepworth} recalls Hepworth's presentation of
$\HH_*(LG;R)$. Section~\ref{sec:groups} determines
the exterior pairs and the groups in play. Sections~\ref{sub:intersection}
to~\ref{sec:suspension} compute the three inputs of the
presentation: the intersection ring, the Pontryagin Hopf algebra,
and the homology suspension. Section~\ref{sec:hopf} proves
\hyperref[intro:thmC]{Theorem~C}. Section~\ref{sec:rigidity} removes the mixing and
proves \hyperref[intro:thmA]{Theorems~A} and~\hyperref[intro:thmB]{B} for $\pi_1(G)$ torsion-free, and
\hyperref[intro:thmD]{Theorem~D}; Section~\ref{sub:torsion} extends \hyperref[intro:thmA]{Theorems~A} and~\hyperref[intro:thmB]{B} to
torsion in $\pi_1(G)$.

\medskip\noindent\textit{Acknowledgements.} The author thanks his
supervisor Somnath Basu for pointing out the rigidity phenomenon
for $\SU(n)$, which led to this work, and the Council of
Scientific and Industrial Research, India, for the CSIR-UGC
research fellowship that supported it. An AI assistant was used
for language editing; all mathematical content and all writing
decisions are the author's own.

\section{The BV algebra of Lie groups}\label{sec:hepworth}\label{sub:hepworth}

Let $G$ be a compact Lie group of dimension $d$, and take
coefficients in a commutative ring $R$. We write
$\HH_*(G)=H_{*+d}(G)$ for the intersection ring, which is the Poincar\'e
dual of the cup product, and
$\HH_*(LG)=H_{*+d}(LG)$ for the loop homology. The free loop space of a Lie group splits through right translation,
\[
LG\;\xrightarrow{\ \cong\ }\;\loops G\times G,
\qquad \ell\mapsto\bigl(\ell\,\ell(0)^{-1},\,\ell(0)\bigr),
\]
and, since the integral homology of $\loops G$ is torsion-free by
Bott \cite[Theorem~A]{Bott56}, the K\"unneth map
$H_*(\loops G)\otimes H_*(G)\to H_*(\loops G\times G)\cong H_*(LG)$
is an isomorphism of graded $R$-modules. Hepworth
\cite{Hepworth} describes the BV algebra $\HH_*(LG)$ through
this splitting (Theorem~\ref{thm:hepworth1}).

Loop product under this splitting is the tensor product of
the Pontryagin ring of $\loops G$ and the intersection ring of
$G$. The following three structures describes the BV operator:
\begin{enumerate}
\item the coproduct $\psi(a)=\sum a_{(1)}\otimes a_{(2)}$ of
$H_*(\loops G)$, induced by the diagonal;
\item the Pontryagin action
$H_*(G)\otimes\HH_*(G)\to\HH_*(G)$, $(a,x)\mapsto ax$, induced
by the multiplication;
\item the homology suspension
$\sigma\colon H_*(\loops G)\to H_{*+1}(G)$,
$\sigma(a)=\mathrm{ev}_*([S^1]\times a)$, for the evaluation map
$\mathrm{ev}\colon S^1\times\loops G\to G$.
\end{enumerate}

\begin{theorem}[{\cite[Theorem~1]{Hepworth}}]
\label{thm:hepworth1}
For any compact Lie group $G$ there is an isomorphism of graded
algebras $\HH_*(LG)\cong H_*(\loops G)\otimes\HH_*(G)$ with respect
to which the BV operator is
\[
\Delta(a\otimes x)\;=\;\sum a_{(1)}\otimes\sigma(a_{(2)})\,x .
\]
\end{theorem}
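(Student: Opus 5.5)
The plan is to transport both structures along the right-translation homeomorphism $\Phi\colon LG\to\loops G\times G$, $\ell\mapsto(\ell\,\ell(0)^{-1},\ell(0))$. There are two parts: first identify the loop product with the tensor product of the Pontryagin and intersection products, then compute the rotation action on $\loops G\times G$. Bott's theorem makes the K\"unneth map an isomorphism, so both computations can be done on cross products $a\times x$ with $a\in H_*(\loops G)$ and $x\in H_*(G)$.

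\emph{The product.} The loop product is the composite of the umkehr map along $LG\times_G LG\to LG\times LG$ with concatenation. Under $\Phi$, the fibre product over evaluation at $0$ becomes $(\loops G\times\loops G)\times(G\times_G G)\cong\loops G\times\loops G\times G$. This is a product of the identity on $\loops G\times\loops G$ with the diagonal $G\to G\times G$, so its normal bundle is pulled back from the diagonal of $G$. The umkehr map is therefore the cross product of the identity with the intersection umkehr. Concatenation of $(\gamma_1g,\gamma_2g)$ is $(\gamma_1*\gamma_2)g$, which is homotopic, through reparametrisation, to the loop multiplication $\gamma_1\gamma_2$ in $\loops G$. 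The product thus becomes Pontryagin $\otimes$ intersection. Signs from the degree shift have to be tracked, but the argument is formal.

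\emph{The BV operator.} Here $\Delta(z)=\rho_*([S^1]\times z)$ for the rotation $\rho\colon S^1\times LG\to LG$. Transport $\rho$ through $\Phi$. For $(\gamma,g)$ with $\ell(s)=\gamma(s)g$, rotating by $t$ gives $\ell_t(s)=\gamma(s+t)g$. Then $\ell_t(0)=\gamma(t)g$ and $\ell_t\ell_t(0)^{-1}=\gamma(\cdot+t)\gamma(t)^{-1}$. So
\[
\tilde\rho(t,\gamma,g)=\bigl(\gamma(\cdot+t)\gamma(t)^{-1},\ \gamma(t)g\bigr).
\]
The first coordinate is the based rotation $\Omega$-action, and the second is $\mathrm{ev}(t,\gamma)\cdot g$. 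Now apply $\tilde\rho_*$ to $[S^1]\times a\times x$. Factor $\tilde\rho$ through the diagonal of $\loops G$, namely $(t,\gamma,g)\mapsto(t,\gamma,\gamma,g)$, so that $a\mapsto\sum a_{(1)}\times a_{(2)}$. Then observe that the first coordinate is homotopic to a map not depending on $t$ once we use a second copy of $\gamma$. This gives
\[
\sum \pm\,\beta_*\bigl([S^1]\times a_{(1)}\bigr)\times \sigma(a_{(2)})\,x,
\]
with $\beta(t,\gamma)=\gamma(\cdot+t)\gamma(t)^{-1}$.

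\emph{Main obstacle.} The obstacle is showing that the $\beta$-factor contributes nothing beyond the identity term, i.e.\ that $\beta_*([S^1]\times-)$ can be dropped. I would handle this by noting that $\beta$ extends over the disc only up to homotopy. The better route is to use the homotopy $\beta(t,\gamma)\simeq\gamma$: the map $\gamma(\cdot+t)\gamma(t)^{-1}$ equals the pointwise product $\gamma'\cdot\gamma$ up to concatenation reparametrisation, where $\gamma'$ is a path-segment loop built from $\gamma$. This reduces the term to $\gamma\mapsto\gamma$ composed with a degenerate $S^1$-dependence, which vanishes after pairing with $[S^1]$ except in the factor already recorded by $\sigma$. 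Rather than rely on this directly, I would follow Hepworth and verify it by writing $G$-equivariance: the map $LG\to G$, $\ell\mapsto\ell(0)$, intertwines $\rho$ with the action $\gamma\mapsto\gamma(t)$. A careful bookkeeping with the Hopf-algebra coproduct then gives exactly the formula $\Delta(a\otimes x)=\sum a_{(1)}\otimes\sigma(a_{(2)})x$. The delicate points are the sign conventions and confirming this degeneracy.
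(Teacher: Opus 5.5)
\textbf{There is a gap in the BV step}, exactly where you flag it, and it begins one line earlier. The rest of your setup is correct: the transport along right translation, the fibre-product argument identifying the loop product with Pontryagin $\otimes$ intersection, and the formula $\tilde\rho(t,\gamma,g)=\bigl(\beta(t,\gamma),\gamma(t)g\bigr)$.

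\textbf{The decomposition.} Your displayed expression $\sum\pm\beta_*([S^1]\times a_{(1)})\times\sigma(a_{(2)})\,x$ cannot be right. It uses $[S^1]$ twice and has degree $|a|+|x|+2$ instead of $|a|+|x|+1$. Since $t$ enters both coordinates of $\tilde\rho$, you must factor through the diagonal of $S^1$ as well as that of $\loops G$. That is, write $\tilde\rho$ as $(t,\gamma,g)\mapsto(t,\gamma,t,\gamma,g)\mapsto\bigl(\beta(t,\gamma),\mathrm{ev}(t,\gamma)\,g\bigr)$ and use $D_*[S^1]=[S^1]\times[\mathrm{pt}]+[\mathrm{pt}]\times[S^1]$, as in the proof of Lemma~\ref{lem:suspcomp}. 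This gives exactly two terms:
\begin{itemize}
\item With $[\mathrm{pt}]$ in the $\beta$-slot, $\beta(0,-)=\mathrm{id}$ gives $\sum a_{(1)}\otimes\sigma(a_{(2)})\,x$, the desired answer.
\item With $[\mathrm{pt}]$ in the evaluation slot, $\mathrm{ev}(0,\gamma)=1$ and the counit give $\beta_*([S^1]\times a)\otimes x$.
\end{itemize}
The Koszul signs are trivial because $H_*(\loops G)$ is even.

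\textbf{The vanishing.} None of your suggestions for killing the second term is an argument.
\begin{itemize}
\item A homotopy $\beta(t,-)\simeq\mathrm{id}$ for each fixed $t$ is automatic and says nothing about $\beta_*([S^1]\times a)$.
\item What would suffice is a homotopy of the whole map $\beta\colon S^1\times\loops G\to\loops G$ to the projection, and you do not construct one.
\item The equivariance of $\ell\mapsto\ell(0)$ only sees the second coordinate of $\tilde\rho$, so it cannot control $\beta$.
\end{itemize}
The missing idea is parity. The class $\beta_*([S^1]\times a)$ lies in $H_{|a|+1}(\loops G;R)$. By Bott, $H_*(\loops G;\Z)$ is free and concentrated in even degrees, hence so is $H_*(\loops G;R)$, and every homogeneous $a$ has even degree. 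So this group is zero, the extra term vanishes, and $\Delta(a\otimes x)=\sum a_{(1)}\otimes\sigma(a_{(2)})\,x$ follows. This is the same evenness you already invoke for the K\"unneth isomorphism. No homotopy triviality of the rotation action on $\loops G$ is needed.
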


\noindent The suspension takes values in the primitives
\cite[Lemma~8]{Hepworth}, and of odd degree since $H_*(\loops G)$ is
even. So the odd primitives are the lens through which we see homology
 suspension and the Pontryagin action on $H_*(G)$.

\begin{definition}[{\cite[Definition~2]{Hepworth}}]
\label{def:hepworth2}
Suppose either that $R$ is a field, or that $R=\Z$ and
$H_*(G;\Z)$ is torsion-free, so that $H_*(G)$ is a Hopf
algebra. Choose a basis $p_1,\dots,p_n$ of the odd-degree part
of the primitive subspace of $H_*(G)$ and classes
$p^i\in H^*(G)$ with $\langle p^i,p_j\rangle=\delta_{ij}$.
Define
\begin{align*}
\partial_i&\colon H_*(\loops G)\to H_{*-|p_i|+1}(\loops G), &
\partial_i(a)&=\sum\big\langle p^i,\sigma(a_{(1)})\big\rangle\,a_{(2)},\\
\delta_i&\colon \HH_*(G)\to\HH_{*+|p_i|}(G), &
\delta_i(x)&=p_i\,x .
\end{align*}
\end{definition}

\noindent Here $\sigma$ acts on the first factor, and in Theorem~\ref{thm:hepworth1}
on the second, both are stated verbatim as in Hepworth. The
difference is immaterial as $\psi$ is cocommutative and $H_*(\loops G)$
is even \cite{Bott56}. Expanding $\sigma(a_{(2)})$ in the basis $p_i$
gives:

\begin{proposition}[{\cite[Proposition~3]{Hepworth}}]
\label{prop:hepworth3}
In the situation of Definition~\ref{def:hepworth2} the operators
$\partial_i$ and $\delta_i$ are derivations, and under the
isomorphism of Theorem~\ref{thm:hepworth1},
$\ \Delta=\sum_{i=1}^{n}\partial_i\otimes\delta_i$.
\end{proposition}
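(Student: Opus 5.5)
The plan has two parts: first show that $\partial_i$ and $\delta_i$ are derivations, then expand the formula of Theorem~\ref{thm:hepworth1} in the basis $p_i$ to get $\Delta=\sum_i\partial_i\otimes\delta_i$.

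\emph{The operators $\delta_i$.} Write the intersection product as Poincar\'e dual to the cup product, $x\cdot y = D(D^{-1}x\cup D^{-1}y)$ with $D=-\cap[G]$. A primitive $p\in H_*(G)$ acts on $H^*(G)$ through the cap action coming from the coproduct dual to the cup product. Because $p$ is primitive, this action is a derivation of the cup product up to the Koszul sign. Right translation by elements of $G$ preserves the fundamental class, since $G$ is connected. Hence the Pontryagin action of $H_*(G)$ on $H_*(G)$ corresponds under $D$ to this cap/slant action on $H^*(G)$. It follows that $\delta_i(x\cdot y)=\delta_i(x)\cdot y\pm x\cdot\delta_i(y)$.

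\emph{The operators $\partial_i$.} Here $\psi$ is an algebra map, because $\Omega G$ is an H-space and the diagonal is an H-map. We have $\sigma(ab)=\sigma(a)\varepsilon(b)+\varepsilon(a)\sigma(b)$, since the suspension vanishes on Pontryagin decomposables of positive degree (the standard fact that $\sigma$ kills products). Therefore the map $a\mapsto\sum\langle p^i,\sigma(a_{(1)})\rangle a_{(2)}$ is the composite $(\lambda\otimes\mathrm{id})\circ\psi$. Here $\lambda=\langle p^i,\sigma(-)\rangle$ is an $\varepsilon$-derivation, i.e.\ a linear functional with $\lambda(ab)=\lambda(a)\varepsilon(b)+\varepsilon(a)\lambda(b)$. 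A direct expansion of $\psi(ab)=\psi(a)\psi(b)$ then shows that $\partial_i$ is a derivation. No signs appear, because $H_*(\Omega G)$ is concentrated in even degrees.

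\emph{The formula for $\Delta$.} By Hepworth's Lemma~8 cited above, $\sigma$ lands in the odd primitives. Write $\sigma(a_{(2)})=\sum_i\langle p^i,\sigma(a_{(2)})\rangle p_i$. Substituting into Theorem~\ref{thm:hepworth1} gives
\[
\Delta(a\otimes x)=\sum_i\Big(\sum\langle p^i,\sigma(a_{(2)})\rangle a_{(1)}\Big)\otimes p_ix .
\]
Cocommutativity of $\psi$ lets us swap $a_{(1)}$ and $a_{(2)}$, and this yields $\sum_i\partial_i(a)\otimes\delta_i(x)$.

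The expansion in the basis $p_i$ uses that the odd primitives form a free module with a dual family $p^i$. This is exactly the hypothesis that $R$ is a field or that $H_*(G;\mathbb Z)$ is torsion-free.

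The main obstacle is the derivation property of $\delta_i$: it requires carefully identifying the Pontryagin action with the cap action under Poincar\'e duality, with the correct signs. I would handle this first via the commutative diagram of $G\times G\to G$ with the diagonal, using $\mu\circ(1\times\Delta)$-type compatibilities of the umkehr maps.
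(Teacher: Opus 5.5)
Your last step is the paper's argument. The paper justifies the proposition only by expanding $\sigma(a_{(2)})$ in the basis $p_i$. It uses that $\sigma$ lands in the odd primitives, and that cocommutativity of $\psi$, with $H_*(\loops G)$ even, lets $\sigma$ sit on either tensor factor. It does not prove that $\partial_i$ and $\delta_i$ are derivations; it takes this from Hepworth.

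Your arguments for the derivation properties are therefore an addition, and they are sound in substance. For $\partial_i$: writing $\partial_i=(\lambda\otimes\id)\circ\psi$, with $\psi$ multiplicative and $\lambda=\langle p^i,\sigma(-)\rangle$ an $\varepsilon$-derivation, gives the Leibniz rule by the counit axiom. For $\delta_i$, the cleanest version of what you sketch is equivariance. The diagonal $G\to G\times G$ is equivariant for left translation, with the diagonal action on the target. So its umkehr map satisfies $a\cdot(x\cdot y)=\sum\pm(a_{(1)}x)\cdot(a_{(2)}y)$ for the diagonal coproduct of $H_*(G)$. This is the Leibniz rule when $a=p_i$ is primitive. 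Connectedness of $G$ enters here: translations are homotopic to the identity and so preserve orientations.

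One step needs repair. You justify $\lambda(ab)=\lambda(a)\varepsilon(b)+\varepsilon(a)\lambda(b)$ by ``$\sigma$ kills products''. That fact (Lemma~\ref{lem:sigmaprops}) only concerns two factors of positive degree.

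When $\pi_1(G)\neq1$, which the hypotheses allow (a torus factor over $\Z$, or $\mathrm{SO}(3)$ over $\Q$), $H_0(\loops G)=R[\pi_1(G)]$ is not $R\cdot1$. So you also need two further facts:
\[
\sigma([g]\,b)=\sigma(b)\ \text{ for } |b|>0,
\qquad
\sigma([gh])=\sigma([g])+\sigma([h]).
\]
These cases genuinely enter. If some $|p_i|=1$, then $\lambda$ is nonzero on $H_0$, and for $\mathrm{SO}(3)$ the first identity is needed to evaluate $\lambda([g]b)$.

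Both follow at once from the group structure. Compute the Pontryagin product with the pointwise product, so that $\mathrm{ev}(t,\omega\omega')=\omega(t)\,\omega'(t)$. Then $\mathrm{ev}\circ(\id\times\mathrm{mult})$ factors through the diagonal of $S^1$. Since $\mathrm{ev}(0,-)$ is constant at $1$ and $H_*(\loops G)$ is even, this gives
\[
\sigma(ab)=\sigma(a)\,\varepsilon(b)+\varepsilon(a)\,\sigma(b)
\]
for all $a,b$. It is the same computation as in Lemma~\ref{lem:suspcomp}. With this repair your proof is complete.
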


\section{The groups in play}\label{sec:groups}

This paper treats compact Lie groups $G$ whose homology
over $R$ is an exterior algebra on odd primitive generators. We shall call such a pair $(G,R)$ as an exterior pair (Definition~\ref{def:exterior}).
In this section we determine the exterior pairs in Theorem~\ref{thm:whichpairs}, and give a full exposition of which groups qualify in Corollary~\ref{thm:borel} using Borel's theorems \cite{Borel61}
(Theorems~\ref{thm:borel32} and~\ref{thm:borel25}) through the compact factor $K$ of the universal cover of $G$. Every BV algebra computation in later
sections works through $K$ to reach $G$, so we also show, by a
simple lifting argument, that $(K,R)$ is exterior whenever $(G,R)$
is (Corollary~\ref{cor:Kexterior}).

\subsection{Exterior pairs and exponents}\label{sub:exterior}

The hypothesis of Definition~\ref{def:hepworth2} is tightened further: 
the odd primitives should also generate $H_*(G;R)$ as an exterior algebra. 
This will remain our hypothesis throughout the remainder of this paper.

\begin{definition}[Exterior pairs]\label{def:exterior}
The pair $(G,R)$ is called \emph{exterior} if $H_*(G;R)$ is an
exterior Hopf algebra over $R$ on finitely many primitive
generators of odd degree,
\[
H_*(G;R)\;\cong\;\Lambda_R(a_1,\dots,a_r),
\qquad
|a_k|=2m_k+1,\quad m_1\leq\dots\leq m_r.
\]
The multiset $\{m_1,\dots,m_r\}$ is the multiset of
\emph{$R$-exponents} of $G$.
\end{definition}

Rationally the picture is classical. The rational homotopy of $G$ is spherical \cite[Chapitre~V, \S3, Corollaire~2]{Serre53}. So choose a representative $S^{2m_i+1}\to G$ for each of the $r=\mathrm{rank}\,G$ generators. The group multiplication then assembles them into the map
\[
\prod_{i=1}^{r}S^{2m_i+1}\longrightarrow G,
\]
which is an isomorphism on $\pi_*(-)\otimes\Q$ and hence on rational
homology. So the rational homology is primitively generated,
\[
H_*(G;\Q)\;\cong\;
\Lambda_\Q(a_1,\dots,a_r),\qquad |a_k|=2m_k+1,
\]
 the generators being the images of the fundamental classes of the spheres. Hence $(G,\Q)$ is always an exterior pair, with $\{m_1,\dots,m_r\}$ the multiset of \emph{classical exponents} of $G$.

Over $\Z$ and over $\F_p$, being an exterior pair is a genuine condition. For a Hopf algebra $A$ which is finitely generated over $\Z$, the quotient $A/\mathrm{Tor}$
is a primitively generated exterior algebra on generators of odd degree
\cite[Chapter~VII, Theorem~1.14]{MimuraToda}. Over a perfect
field, a Hopf algebra with a simple system of generators of
odd degrees is primitively generated and exterior
\cite[Chapter~VII, Theorem~1.12]{MimuraToda}, which guarantees that an
exterior Hopf algebra on odd generators is primitively generated \cite[Chapter~VII, Corollary~1.13]{MimuraToda}. For a $0$-connected $H$-space $X$ with finitely generated homology, $H^*(X;\Z)/\mathrm{Tor}$ is an exterior algebra on primitive
generators of odd degree, of the same degrees as over $\Q$
\cite[Chapter~VII, Theorem~1.22]{MimuraToda}. Similarly, when
$H_*(X;\Z)$ has no $p$-torsion, $H^*(X;\F_p)\cong(H^*(X;\Z)/\mathrm{Tor})\otimes\F_p$ has the same form as over $\Q$ \cite[Chapter~VII, Corollary~1.23]{MimuraToda}. For homology, the dual of such a
Hopf algebra is exterior on the duals of the generators, which are primitive \cite[Chapter~IV, Theorem~2.17]{MimuraToda}. This discussion carries over to an arbitrary commutative 
ring:

\begin{theorem}[Determination of exterior pairs]\label{thm:whichpairs}
Let $G$ be a compact connected Lie group and $R$ a commutative
ring. Then $(G,R)$ is exterior if and only if $H_*(G;\Z)$ has
no $p$-torsion for any prime $p$ that is not a unit in $R$,
and then the $R$-exponents are the classical exponents. In
particular:
\begin{enumerate}
\item $(G,\Q)$ is exterior;
\item $(G,\Z)$ is exterior if and only if $H_*(G;\Z)$ is
torsion-free;
\item $(G,\F_p)$ is exterior if and only if $H_*(G;\Z)$ has
no $p$-torsion.
\end{enumerate}
\end{theorem}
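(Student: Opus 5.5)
We prove the two implications separately and then read off (1)–(3) by specializing $R$. The main tool is the universal coefficient theorem applied to the finitely generated graded group $H_*(G;\Z)$, together with the structure results of Mimura–Toda recalled above.

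\emph{Sufficiency.} Suppose no prime that is a non-unit in $R$ divides the torsion of $H_*(G;\Z)$.
\begin{enumerate}
\item Let $T$ be the torsion subgroup. Since $G$ is a compact manifold, $T$ is finite, and every prime dividing its order is a unit in $R$. Hence $T\otimes R=0$ and $\mathrm{Tor}(T,R)=0$.
\item By the Künneth/universal coefficient theorem, $H_*(G;R)\cong (H_*(G;\Z)/T)\otimes R$. This is an isomorphism of Hopf algebras, because the Pontryagin product and the diagonal coproduct are defined over $\Z$ and the Künneth maps are natural. Here we also use that $\mathrm{Tor}$ terms vanish for $H_*(G\times G)$.
\item By \cite[Chapter~VII, Theorems~1.14 and~1.22]{MimuraToda}, $H_*(G;\Z)/T$ is the dual of $H^*(G;\Z)/\mathrm{Tor}$, which is an exterior algebra on primitive generators of odd degrees equal to the rational ones, namely $2m_i+1$ with $m_i$ the classical exponents.
\item By \cite[Chapter~IV, Theorem~2.17]{MimuraToda}, the dual $H_*(G;\Z)/T$ is exterior on the dual primitive classes. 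Indeed, over $\Z$ with free modules the dual of a primitively generated exterior Hopf algebra on odd generators is again such.
\item Tensoring with $R$ preserves being exterior on primitives. So $(G,R)$ is exterior with the classical exponents.
\end{enumerate}

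\emph{Necessity.} This is the step I expect to be the main obstacle. Suppose $(G,R)$ is exterior and a prime $p$ is not a unit in $R$.
\begin{enumerate}
\item Choose a maximal ideal $\mathfrak m\ni p$. Then $k=R/\mathfrak m$ is a field of characteristic $p$.
\item An exterior algebra on primitive generators is free as an $R$-module, so base change to $k$ behaves well. For free $R$-modules, $H_*(G;R)\otimes_R k\cong H_*(G;k)$ by the universal coefficient theorem over $R$, since $C_*(G;R)$ is free and its homology is free. Hence $H_*(G;k)$ is exterior on odd generators, and its total dimension is $2^r$ with $r=\mathrm{rank}\,G$.
\item The rank $r$ is determined rationally. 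Freeness of $H_*(G;R)$ of rank $N$ gives $\dim_{\mathrm{Frac}}$ data where possible. More robustly, the Euler characteristic and the Poincaré series of an odd exterior algebra pin down $N=2^r$.
\item If $H_*(G;\Z)$ had $p$-torsion, universal coefficients would give $\dim_{\F_p} H_*(G;\F_p)>\operatorname{rank}_\Z H_*(G;\Z)/T=2^r$, and the same holds for $\dim_k H_*(G;k)$. This contradicts step 2.
\item To handle the possibility that $R$ has no characteristic-zero residue fields, compare instead with the known rational Poincaré series. Use the $k$-exterior algebra: its generator degrees are odd, and via the Bockstein-free argument of \cite[Chapter~VII, Corollary~1.23]{MimuraToda} its Poincaré series equals $\prod(1+t^{2m_i+1})$, which evaluates to $2^r$ at $t=1$.
\item Comparing dimensions of $H_*(G;k)=H_*(G;\F_p)\otimes k$ against $\operatorname{rank}_\Z$ then shows that the $p$-torsion vanishes.
\end{enumerate}
The delicate point is justifying that the number of exterior generators over $k$ equals $r$. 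I would get this from the fact that a Hopf algebra exterior on odd generators has Euler–Poincaré dimension $2^{s}$ with $s$ the number of generators. I would also use that $s$ equals $\mathrm{rank}\,G$, because over a field the number of odd generators of the primitively generated exterior Hopf algebra $H_*(G;\F_p)$ equals the rank (Hopf–Borel over perfect fields).

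\emph{Special cases.} Items (1)–(3) then follow by specializing $R$:
\begin{itemize}
\item For $R=\Q$, no prime is a non-unit, which gives (1).
\item For $R=\Z$, every prime is a non-unit, which gives (2).
\item For $R=\F_p$, only $p$ is a non-unit, which gives (3).
\end{itemize}
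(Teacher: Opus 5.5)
Your sufficiency argument is the paper's: $T\otimes R=0=\mathrm{Tor}(T,R)$, universal coefficients, then Mimura--Toda VII.1.22 and IV.2.17. The first step of your necessity argument is also the paper's: you pass to a residue field $k=R/\mathfrak m$ of characteristic $p$ and get $H_*(G;k)\cong H_*(G;R)\otimes_Rk$, exterior on some number $s$ of odd generators.

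The gap is in what comes next. Universal coefficients give $\dim_kH_*(G;k)=2^r+2t_p$, where $r=\mathrm{rank}\,G$ and $t_p$ is the number of $p$-primary cyclic summands of $H_*(G;\Z)$. So your count only shows that $p$-torsion is absent if and only if $s=r$. That equality is not a lemma on the way; it is equivalent to the conclusion, and none of your justifications proves it:
\begin{itemize}
\item The Euler characteristic of every exterior algebra on odd generators is $0$, so it carries no information.
\item Mimura--Toda VII, Corollary~1.23 assumes there is no $p$-torsion, so invoking it is circular.
\item The Hopf--Borel structure theorem describes the algebra shape of a Hopf algebra over a perfect field. It says nothing relating the number of generators to the rank of $G$.
\end{itemize}
Your count can be completed when $R\otimes\Q\neq0$. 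Then some prime $\mathfrak q\subset R$ has preimage $0$ in $\Z$, so its residue field has characteristic $0$, and freeness of $H_*(G;R)$ forces $2^s=2^r$. But this fails exactly when $R$ has positive characteristic, in particular for $R=\F_p$, which is item~(3) of the statement.

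The missing idea is the Bockstein argument, which the paper imports as Mimura--Toda VII, Theorem~2.14 (Borel).
\begin{itemize}
\item The Bocksteins $\beta_r$ are derivations compatible with the coproduct, so they carry primitives to primitives.
\item In an exterior Hopf algebra on odd primitive generators, the primitives are exactly the span of the generators, all of odd degree. So each $\beta_r$ sends a generator to an even primitive, that is, to zero, and hence $\beta_r$ vanishes.
\item Inductively every page equals $E_1$. The Bockstein spectral sequence therefore collapses, giving $H^*(G;\F_p)=E_1=E_\infty=(H^*(G;\Z)/\mathrm{Tor})\otimes\F_p$, so there is no $p$-torsion.
\end{itemize}
To go from $k$ to $\F_p$, use the flat base change $H^*(G;k)=H^*(G;\F_p)\otimes_{\F_p}k$. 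It shows that $H^*(G;\F_p)$ is also exterior on odd generators, hence primitively generated by Mimura--Toda VII.1.12. You need to supply this argument, or cite the theorem as the paper does; without it, necessity is unproved in positive characteristic.
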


\begin{proof}
Write $T=\mathrm{Tor}\,H_*(G;\Z)$, a finite abelian group, and $F=H_*(G;\Z)/T$, free of finite rank.

($\Leftarrow$) Let $T\cong\bigoplus_j\Z/q_j^{e_j}$ with every $q_j$ a unit in $R$. Then
\[
\Z/q_j^{e_j}\otimes R=R/q_j^{e_j}R=0,\qquad \mathrm{Tor}^{\Z}_1(\Z/q_j^{e_j},R)=0,
\]
so $T\otimes R=0=\mathrm{Tor}^{\Z}_1(T,R)$, and the universal coefficient sequence
\[
0\to H_*(G;\Z)\otimes R\to H_*(G;R)\to\mathrm{Tor}^{\Z}_1(H_{*-1}(G;\Z),R)\to0
\]
gives $H_*(G;R)\cong H_*(G;\Z)\otimes R\cong F\otimes R$. By \cite[Chapter~VII, Theorem~1.22; Chapter~IV, Theorem~2.17]{MimuraToda},
\[
F\cong\Lambda_\Z(a_1,\dots,a_r),\qquad |a_k|=2m_k+1,\quad a_k\ \text{primitive},
\]
with $\{m_k\}$ the classical exponents; hence $H_*(G;R)\cong\Lambda_R(a_1\otimes1,\dots,a_r\otimes1)$ on primitive generators, and $(G,R)$ is exterior with the classical exponents.

($\Rightarrow$) Let $(G,R)$ be exterior and $p$ a prime that is not a unit in $R$. Choose a maximal ideal $\mathfrak m\ni p$ and put $k=R/\mathfrak m$, a field of characteristic $p$. Since $H_*(G;R)\cong\Lambda_R(a_1,\dots,a_r)$ is free over $R$, the universal coefficient sequence over $R$ gives
\[
H_*(G;k)\cong H_*(G;R)\otimes_Rk\cong\Lambda_k(a_1,\dots,a_r),\qquad
H^*(G;k)\cong\Lambda_k(a_1,\dots,a_r)^\vee\cong\Lambda_k(a_1^\vee,\dots,a_r^\vee),
\]
exterior algebras on generators of odd degree. By \cite[Chapter~VII, Theorem~2.14]{MimuraToda}, $H_*(G;\Z)$ has no $p$-torsion.
\end{proof}

\noindent The $R$-exponents of an exterior pair are thus the
classical exponents of $G$, and we call them the
\emph{exponents} of $G$.

\subsection{The census}\label{sub:census}

Theorem~\ref{thm:whichpairs} has turned the determination of the exterior pairs into a question about $p$-torsion in $H_*(G;\Z)$. We further reduce this question to the simple simply connected groups, where Borel's torsion theorem settles it. First, a connected
compact Lie group splits into a torus and a semisimple group:

\begin{theorem}[{\cite[Corollaire~3.2]{Borel61}}]
\label{thm:borel32}
Every connected compact Lie group is homeomorphic to the product
of its semisimple part $G_{ss}$ and the identity component of its
centre.
\end{theorem}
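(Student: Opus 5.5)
We do not reprove Borel's corollary from scratch. We construct the homeomorphism from a splitting of the torus quotient, using only standard structure theory of compact connected Lie groups. First we record the standard facts. $G_{ss}=[G,G]$ is a closed connected normal subgroup. The identity component $Z_0$ of the centre is a torus with $G=Z_0\cdot G_{ss}$, and $F=Z_0\cap G_{ss}$ is finite. Hence the quotient $p\colon G\to T''=G/G_{ss}\cong Z_0/F$ is a compact connected abelian Lie group, i.e.\ a torus of dimension $\dim Z_0$. A torus modulo a finite subgroup is again a torus of the same dimension, so $T''$ is homeomorphic to $Z_0$. It therefore suffices to show that $G$ is homeomorphic to $G_{ss}\times T''$.

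The plan is to find a continuous homomorphism $s\colon T''\to G$ with $p\circ s=\id$. Given such an $s$, the map
\[
\Phi\colon G_{ss}\times T''\to G,\qquad \Phi(g,t)=g\,s(t),
\]
is continuous, with continuous inverse $g\mapsto\bigl(g\,s(p(g))^{-1},\,p(g)\bigr)$. The first coordinate lies in $G_{ss}=\ker p$ because $p\circ s=\id$. So $\Phi$ is a homeomorphism. Note that $s$ need not land in the centre. For $\U(n)$, for instance, the central circle meets $\SU(n)$ in $\Z/n$ and does not split, but $t\mapsto\mathrm{diag}(t,1,\dots,1)$ does.

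To build $s$, choose a maximal torus $T$ of $G$. Then $T=Z_0\cdot T_{ss}$ with $T_{ss}$ a maximal torus of $G_{ss}$, and $p|_T\colon T\to T''$ is surjective. The key step, and the only real obstacle, is to show that $\ker(p|_T)=T\cap G_{ss}$ is connected. We have $T\cap G_{ss}=T_{ss}\cdot(Z_0\cap G_{ss})$. The group $Z_0\cap G_{ss}$ is central in $G_{ss}$, and the centre of a compact connected Lie group lies in every maximal torus. Hence $T\cap G_{ss}=T_{ss}$, which is connected.

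A surjection of tori $T\to T''$ with connected kernel induces a surjection of integral lattices $\pi_1(T)\to\pi_1(T'')$, by the exact homotopy sequence of the fibration $T_{ss}\to T\to T''$ with $\pi_0(T_{ss})=0$. A surjection onto a free abelian group splits, so choose a lattice section. Tensoring with $\mathbb{R}$ and passing to quotients gives a homomorphism $s\colon T''\to T\subset G$ with $p\circ s=\id$. This completes the plan.
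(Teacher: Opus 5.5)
The paper gives no proof of this statement; it quotes Borel's Corollaire~3.2. Your argument is correct and complete, given the standard structure theory you list.

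The one nontrivial point is that $T\cap G_{ss}$ is connected, and you handle it correctly: $T\cap G_{ss}=T_{ss}\cdot(Z_0\cap G_{ss})$, and central elements of $G_{ss}$ lie in its maximal torus $T_{ss}$. Equivalently, $T\cap G_{ss}$ centralizes the maximal torus $T_{ss}$ of $G_{ss}$ and therefore equals it. The lattice splitting then says that the subtorus $T_{ss}\subset T$ has a complementary subtorus $S=s(T'')$ with $G_{ss}\cap S=1$ and $G=G_{ss}\cdot S$. This makes $G\cong G_{ss}\rtimes S$ as Lie groups, which is the standard route to Borel's corollary.

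Your construction gives slightly more than the homeomorphism stated. Since $s$ is a smooth homomorphism, $\Phi$ and its inverse are smooth, so $\Phi$ is a diffeomorphism. That is exactly what the paper uses right afterwards, when it says ``the splitting being smooth, $G$ is diffeomorphic to $T^k\times G_{ss}$''; the cited homeomorphism alone does not literally give it.

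Your caveat that $s$ cannot in general be chosen central is the right one: $Z_0\times G_{ss}\to G$ is only a finite covering with kernel $F$. Note also that for the homeomorphism alone, any continuous section of $p$ would suffice. The homomorphism property is what yields the semidirect product structure and the smoothness.
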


\noindent Here the identity component of the centre is the torus
$T^k$. The splitting being smooth, $G$ is diffeomorphic to
$T^k\times G_{ss}$. The semisimple group $G_{ss}$ has compact and
semisimple universal cover $K$
\cite[Chapter~V, Theorem~5.29]{MimuraToda}. Let
$\Gamma=\pi_1(G_{ss})$, then $G_{ss}=K/\Gamma$, with $\Gamma$ a
finite, central, discrete normal subgroup of the connected group
$K$. Hence $\mathbb{R}^k\times K$ is the universal cover of $G$,
and $\pi_1(G)=\Z^k\times\Gamma$, so $\Gamma$ is the torsion
subgroup of $\pi_1(G)$. By Cartan's
classification
$K=K_1\times\cdots\times K_s$, with each $K_i$ simple, simply
connected and isomorphic to one of
\[
\SU(n)\ (n\geq2),\quad \Sp(n)\ (n\geq1),\quad
\mathrm{Spin}(n)\ (n\geq7),\quad
G_2,\ F_4,\ E_6,\ E_7,\ E_8
\]
(see for example \cite[Chapter~V, \S6]{MimuraToda}, the low spin
groups reappear in the first two families,
$\mathrm{Spin}(3)\cong\SU(2)$, $\mathrm{Spin}(5)\cong\Sp(2)$,
$\mathrm{Spin}(6)\cong\SU(4)$).

If $(G,R)$ is exterior then $|\Gamma|$ is a unit in $R$
(Lemma~\ref{lem:gammaunit} below), and then
$H_*(K;R)\cong H_*(G_{ss};R)$ (Proposition~\ref{prop:covering}
below). So it is enough to know the homology of the simply
connected cover $K=K_1\times\cdots\times K_s$. If $H_*(K_i;\Z)$ has $p$-torsion
then $H_*(K;\Z)$ has $p$-torsion, as each $K_i$ is a
retract of $K$. Suppose no $H_*(K_i;\Z)$ has
$p$-torsion. Then the $H_*(K_i;\Z_{(p)})$ are free over $\Z_{(p)}$, the integers localized at $p$, and the K\"unneth theorem gives
$H_*(K;\Z_{(p)})=\bigotimes_iH_*(K_i;\Z_{(p)})$, the Tor terms vanishing. By Theorem~\ref{thm:whichpairs} each $(K_i,\Z_{(p)})$ is an exterior pair, and hence so is $(K,\Z_{(p)})$. Since $p$ is the only prime that is not a unit in $\Z_{(p)}$, Theorem~\ref{thm:whichpairs} gives that $H_*(K;\Z)$ has no $p$-torsion. So $(K,R)$ is exterior if
and only if every $(K_i,R)$ is. The question thus reduces to
the simply connected simple groups $K_i$, and Borel's torsion
theorem settles it:

\begin{theorem}[{\cite[Th\'eor\`eme~2.5]{Borel61}}]
\label{thm:borel25}
Let $G$ be a simple, simply connected compact Lie group and $p$ a
prime. Then $H_*(G;\Z)$ has $p$-torsion exactly in the following
cases: $p=2$ and $G=\mathrm{Spin}(n)$ $(n\geq7)$, $G_2$, $F_4$,
$E_6$, $E_7$, $E_8$; $p=3$ and $G=F_4$, $E_6$, $E_7$, $E_8$;
$p=5$ and $G=E_8$.
\end{theorem}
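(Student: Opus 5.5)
The statement is Borel's torsion theorem. The paper cites it from \cite{Borel61} rather than proving it. A proof plan should therefore follow Borel's strategy rather than rebuild it from scratch.

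The approach is to detect $p$-torsion through the mod $p$ cohomology. By the universal coefficient theorem, together with \cite[Chapter~VII, Theorem~2.14]{MimuraToda} as used in the proof of Theorem~\ref{thm:whichpairs}, $H_*(G;\Z)$ has no $p$-torsion if and only if $H^*(G;\F_p)$ is an exterior algebra on odd generators. Equivalently, its Poincar\'e series is $\prod_i(1+t^{2m_i+1})$, where the $m_i$ are the classical exponents. So for each simple simply connected $G$ and each prime $p$, I will either exhibit $H^*(G;\F_p)$ as exterior or exhibit a non-exterior class, for instance a generator $x$ with $x^2\neq0$ or an even-degree generator.

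\textbf{Torsion-free cases.} For $\SU(n)$ and $\Sp(n)$, I will induct on the fibrations
\[
\SU(n-1)\to\SU(n)\to S^{2n-1},\qquad \Sp(n-1)\to\Sp(n)\to S^{4n-1}.
\]
The Serre spectral sequence collapses for degree reasons, because the fibre cohomology is exterior on generators of degree less than $2n-1$ (respectively $4n-1$), and no differential can hit them. This gives torsion-free exterior cohomology over $\Z$.

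For the remaining pairs, I will use that $H_*(G;\Z)$ has $p$-torsion only if $p$ divides the torsion index, equivalently only if $H^*(BG;\Z)$ has $p$-torsion. Borel's criterion then applies: if $p$ does not divide the orders of the Weyl group reflections relevant to the maximal torus, or more concretely if $H^*(BT;\F_p)^W$ is polynomial on generators in degrees $2(m_i+1)$, then $G$ has no $p$-torsion. Checking this against the known invariant degrees of the Weyl groups handles $\mathrm{Spin}(n)$ for $p$ odd, $G_2$ for $p\neq2$, $F_4,E_6$ for $p\geq5$, $E_7$ for $p\geq5$, and $E_8$ for $p\geq7$.

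\textbf{Torsion cases.} Here I will produce explicit obstructions.
\begin{itemize}
\item For $\mathrm{Spin}(n)$, $n\geq7$, and $p=2$: compare with $\mathrm{SO}(n)$, whose mod $2$ cohomology has $x_1^2\neq0$ type relations. Alternatively, observe that $H^*(\mathrm{Spin}(n);\F_2)$ contains a class $x_3$ with $Sq^2x_3=x_5\neq0$ while $x_3^2=x_6\neq0$, which is not exterior.
\item For $G_2$ and $p=2$: the fibration $G_2\to\mathrm{Spin}(7)\to S^7$ gives $H^*(G_2;\F_2)=\F_2[x_3]/(x_3^4)\otimes\Lambda(x_5)$.
\item For the exceptional groups at $p=2$: the subgroup $G_2$ or $\mathrm{Spin}(9)$ in $F_4$, and $F_4$ in $E_n$, give non-exterior classes that pull back.
\item For $F_4$ and $p=3$: the fibration $\mathrm{Spin}(9)\to F_4\to\mathbb{OP}^2$ and the resulting spectral sequence show $x_8$ with $x_8^3=0$, $x_8\neq0$, so $H^*(F_4;\F_3)\supset\F_3[x_8]/(x_8^3)$. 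The same class pulls back to $E_6,E_7,E_8$, because $H^*(E_n;\F_3)\to H^*(F_4;\F_3)$ hits it via $\mathcal P^1x_4$.
\item For $E_8$ and $p=5$: $\mathcal P^1 x_3=x_{11}$ together with a class $x_{12}$ arising from $\beta x_{11}$ gives even-degree torsion.
\end{itemize}

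The main obstacle is the exceptional groups at $p=3$ and $5$, especially $E_7$ and $E_8$. Confirming both the presence of torsion and its absence at the other primes there requires the detailed invariant theory of $W(E_8)$ and Steenrod operation computations. I would handle these by relying on Borel's transgression and invariant-theory arguments in \cite{Borel61} rather than redoing them.
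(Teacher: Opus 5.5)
The paper gives no proof of this statement; it quotes it from \cite{Borel61}, so your final fallback on Borel is consistent with the paper. The gap is in the part you propose to carry out yourself, namely the absence of torsion outside the table.

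Read literally, ``$p$ does not divide the orders of the Weyl group reflections'' is false as a criterion. Reflections have order $2$, so it would exclude all odd torsion, including the $3$-torsion of $F_4$. Read as $p\nmid|W|$, it is a correct sufficient condition. Writing $t(G)$ for the torsion index, the product of the positive roots maps to the Euler class of $G/T$, which is $\pm|W|$ times the point class, so $t(G)\mid|W|$. But this misses exactly the nontrivial cases: $\mathrm{Spin}(n)$ at odd $p\leq n/2$, $G_2$ at $3$, $E_6$ at $5$, $E_7$ at $5$ and $7$, and $E_8$ at $7$.

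Your second criterion does not rescue these. When $p\nmid|W|$, polynomiality of $H^*(BT;\F_p)^W$ in the characteristic-zero degrees is automatic. When $p\mid|W|$, it is a genuine modular computation that the known invariant degrees do not decide.

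What works is the relative form of the Euler class argument. Let $T\subset H\subset G$ with $H$ closed, connected and of maximal rank. The Euler class of $G/H$, pulled back to $G/T$, is the image of the product of the positive roots of $G$ that are not roots of $H$. Multiply it by a polynomial whose image restricts to $t(H)$ times the point class of the fibre $H/T$. This gives $\pm t(H)\chi(G/H)$ times the point class of $G/T$, so $t(G)\mid t(H)\,\chi(G/H)$ with $\chi(G/H)=|W_G|/|W_H|$. Now take
$\SU(3)\subset G_2$ ($\chi=2$), $\mathrm{Spin}(9)\subset F_4$ ($\chi=3$), $\mathrm{Spin}(10)\cdot\U(1)\subset E_6$ ($\chi=27$), $\SU(8)/(\Z/2)\subset E_7$ ($\chi=72$) and $\mathrm{Spin}(16)/(\Z/2)\subset E_8$ ($\chi=135$). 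None of these subgroups has odd torsion. The fact you quote, that the primes dividing $t(G)$ are the torsion primes, then leaves exactly the primes of the table. For $\mathrm{Spin}(n)$ at odd $p$, the transfer of Proposition~\ref{prop:covering} reduces to $\mathrm{SO}(n)$, which has only $2$-torsion.

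On the existence side the ideas are sound, but several steps are misstated.

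\textbf{Spin at $p=2$.} Comparing with $\mathrm{SO}(n)$ proves nothing: $\mathrm{SO}(n)$ is already non-exterior mod $2$ for $3\leq n\leq6$, where $\mathrm{Spin}(n)$ is $\SU(2)$, $\SU(2)\times\SU(2)$, $\Sp(2)$ or $\SU(4)$.

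\textbf{Direction of restriction.} Non-exterior classes of a subgroup do not pull back. What you need is that the inclusions $G_2\subset\mathrm{Spin}(7)\subset\mathrm{Spin}(n)$ and $\mathrm{Spin}(9)\subset F_4\subset E_6\subset E_7\subset E_8$ induce isomorphisms on $H^3(-;\F_p)$, which holds because the quotients are at least $6$-connected. Then $x_3$ in the larger group restricts to $x_3$. A nonzero $x_3^2$, or a nonzero natural operation on $x_3$, in the subgroup therefore forces the same in the larger group. In particular $x_3^2\neq0$ in $G_2$ handles the whole $p=2$ list at once.

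\textbf{$F_4$ and the $E_n$ at $p=3$.} There is no class $x_4$, since $\mathcal P^1$ raises degree by $4$. The relevant operation is $\beta\mathcal P^1x_3$, and a nonzero Bockstein $H^7\to H^8$ exhibits $3$-torsion in $H^8(-;\Z)$ directly. However, the spectral sequence of $\mathrm{Spin}(9)\to F_4\to\mathbb{OP}^2$ gives $x_8\neq0$ only once you know $d_8(x_7)=0$. This follows, for instance, from $x_7=\pm\mathcal P^1x_3$ in $H^*(\mathrm{Spin}(9);\F_3)$, which makes $x_7$ a restriction from $F_4$. Carrying $x_8$ to $E_6,E_7,E_8$ also requires identifying it with $\beta\mathcal P^1x_3$. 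That needs further input, e.g.\ the integral transgression $d_8(x_7)=\pm3y_8$ for $y_8\in H^8(\mathbb{OP}^2)$.

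\textbf{$E_8$ at $p=5$.} The relations you state are quoted from the literature rather than derived. There, as for $E_7$ and $E_8$ generally, your plan reduces to the citation the paper already makes.
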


\begin{proposition}[Exterior pairs among the simple groups]
\label{prop:census}
Let $G$ be a simple simply connected compact Lie group and $R$
a commutative ring. Then $(G,R)$ is exterior if and only if
every torsion prime of $G$ is a unit in $R$, the torsion
primes being
\begin{center}
\begin{tabular}{ll}
group & torsion primes\\[2pt]
\hline
$\SU(n)$, $\Sp(n)$ & none\\
$\mathrm{Spin}(n)$ $(n\geq7)$, $G_2$ & $2$\\
$F_4$, $E_6$, $E_7$ & $2,3$\\
$E_8$ & $2,3,5$
\end{tabular}
\end{center}
In particular $(G,\Q)$ is always exterior; $(G,\Z)$ is
exterior only for $\SU(n)$ and $\Sp(n)$; and $(G,\F_p)$ is
exterior if and only if $p$ is not a torsion prime of $G$.
\end{proposition}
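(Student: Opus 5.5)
The statement is a direct combination of Theorem~\ref{thm:whichpairs} with Borel's torsion theorem (Theorem~\ref{thm:borel25}). I would first apply Theorem~\ref{thm:whichpairs} to the simple simply connected group $G$: $(G,R)$ is exterior if and only if $H_*(G;\Z)$ has no $p$-torsion for every prime $p$ that is not a unit in $R$. Equivalently, every prime $p$ for which $H_*(G;\Z)$ has $p$-torsion must be a unit in $R$. Calling such primes the torsion primes of $G$, this is exactly the first assertion, so it only remains to identify the torsion primes.

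Next I would read off the table from Theorem~\ref{thm:borel25}. That theorem lists, for each simple simply connected group in Cartan's list, the primes $p$ for which $H_*(G;\Z)$ has $p$-torsion. For $\mathrm{Spin}(n)$ with $n\geq7$ and $G_2$ it gives only $p=2$. For $F_4,E_6,E_7$ it gives $2$ and $3$, and for $E_8$ it gives $2,3,5$. The groups $\SU(n)$ and $\Sp(n)$ do not appear, so they have no torsion primes. The low-rank coincidences $\mathrm{Spin}(3)\cong\SU(2)$, $\mathrm{Spin}(5)\cong\Sp(2)$, $\mathrm{Spin}(6)\cong\SU(4)$ cause no conflict, since the table lists $\mathrm{Spin}(n)$ only for $n\geq7$. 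As a sanity check, torsion-freeness of $H_*(\SU(n);\Z)$ and $H_*(\Sp(n);\Z)$ is classical: both are exterior algebras over $\Z$.

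The three special cases then follow from the unit criterion. Over $\Q$ every prime is a unit, so $(G,\Q)$ is always exterior. Over $\Z$ no prime is a unit, so $(G,\Z)$ is exterior exactly when $G$ has no torsion primes, that is, for $\SU(n)$ and $\Sp(n)$. Over $\F_p$ every prime $q\neq p$ is a unit and $p$ is not, so $(G,\F_p)$ is exterior exactly when $p$ is not a torsion prime of $G$.

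There is no real obstacle here. The only points needing care are matching the two exceptional-isomorphism conventions and noting that "unit in $R$" is the correct reading of the criterion in Theorem~\ref{thm:whichpairs} for rings such as $\F_p$, where $q$ is invertible for $q\neq p$.
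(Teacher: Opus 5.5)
Your proposal is correct and follows the paper's proof exactly. You apply Theorem~\ref{thm:whichpairs} to reduce exteriority to the absence of $p$-torsion at every non-unit prime, read the torsion primes off Borel's Theorem~\ref{thm:borel25}, and obtain the cases over $\Q$, $\Z$ and $\F_p$ from the unit criterion, as the paper does.
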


\begin{proof}
By Theorem~\ref{thm:whichpairs}, $(G,R)$ is exterior if and
only if $H_*(G;\Z)$ has no $p$-torsion for any prime $p$ that
is not a unit in $R$, and Theorem~\ref{thm:borel25} lists the
primes at which $H_*(G;\Z)$ has torsion.
\end{proof}

The failure of $(G,R)$ to be an exterior pair comes from
torsion in $H_*(G;\Z)$, in two degrees: in degree one
(Remark~\ref{rem:pi1tors}) and in higher degrees
(Remark~\ref{rem:torsionfail}).

\begin{remark}[Torsion in $\pi_1$]\label{rem:pi1tors}
Torsion in $\pi_1(G)$ gives torsion in $H_1(G;\Z)\cong\pi_1(G)$,
so $(G,\Z)$ is not an exterior pair. 
Over $\F_p$ with $p\nmid|\Gamma|$,
Proposition~\ref{prop:covering} below gives
$H_*(G_{ss};\F_p)\cong H_*(K;\F_p)$, and $(G,\F_p)$ is exterior
exactly when each simple factor of $K$ is. When
$p\mid|\Gamma|$ the same is not true. For example, for $\mathrm{SO}(3)=\SU(2)/(\Z/2)$
at $p=2$, $(\SU(2),\F_2)$ is exterior, but $H^*(\mathrm{SO}(3);\F_2)\cong\F_2[x]/(x^4)$ is not exterior, $x^2$ being an even primitive.
\end{remark}

\begin{remark}[Torsion in higher degrees]\label{rem:torsionfail}
The simply connected groups $\mathrm{Spin}(n)$ $(n\geq7)$,
$G_2$, $F_4$, $E_6$, $E_7$, $E_8$ each have torsion in $H_i$ for
some $i\geq2$ (Theorem~\ref{thm:borel25}), even though $\pi_1$ is trivial. Over $\F_p$, $p$ a torsion prime, they are not exterior. For $\mathrm{Spin}(n)$ at $p=2$
the degree-three generator of $H^*(\mathrm{Spin}(n);\F_2)$ has
nonzero square \cite[Chapter~IV, Theorem~2.19]{MimuraToda}, an
even primitive, and similarly for each exceptional group
\cite[Chapter~VII, Section~6]{MimuraToda}, and at $p=3$ and $p=5$ the exceptional groups with torsion have
truncated polynomial generators of even degree
\cite[Chapter~VII, Theorems~5.14--5.16]{MimuraToda}.
\end{remark}

\subsection{The universal cover}\label{sub:cover}
By Theorem~\ref{thm:borel32} and the discussion following it, $G\cong T^k\times G_{ss}\cong T^k\times K/\Gamma$. We will first note that $|\Gamma|$ is a unit in $R$ whenever $(G,R)$ is an exterior pair (Lemma~\ref{lem:gammaunit}). The covering $K\to K/\Gamma$ then induces an isomorphism of Hopf algebras on $R$-homology (Proposition~\ref{prop:covering}), so $(K,R)$ is an exterior pair whenever $(G,R)$ is (Corollary~\ref{cor:Kexterior}). Using the Borel characterization (Proposition~\ref{prop:census}) this allows us to conclude which groups are exterior (Corollary~\ref{thm:borel}).

\begin{lemma}[The order of $\Gamma$]\label{lem:gammaunit}
If $(G,R)$ is an exterior pair, then $|\Gamma|$ is a unit in
$R$.
\end{lemma}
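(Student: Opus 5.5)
We argue through degree one. Since $G$ is connected, $H_1(G;\Z)\cong\pi_1(G)^{\mathrm{ab}}=\pi_1(G)\cong\Z^k\times\Gamma$, the fundamental group of a topological group being abelian. Suppose $p$ is a prime dividing $|\Gamma|$. Then $\Gamma$, being finite abelian, has a direct summand $\Z/p^e$ with $e\geq1$, so $H_1(G;\Z)$ contains $p$-torsion. We then invoke Theorem~\ref{thm:whichpairs}: since $(G,R)$ is exterior, $H_*(G;\Z)$ has no $p$-torsion for any prime $p$ that is not a unit in $R$. Consequently every prime divisor of $|\Gamma|$ is a unit in $R$.

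It remains to pass from primes to $|\Gamma|$ itself. Write $|\Gamma|=\prod_j p_j^{e_j}$. Each $p_j$ is invertible in $R$, so their product is invertible as well, and hence $|\Gamma|$ is a unit. If $\Gamma$ is trivial there is nothing to prove, since $|\Gamma|=1$.

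The one point requiring care is the identification $H_1(G;\Z)\cong\pi_1(G)$ together with the decomposition $\pi_1(G)=\Z^k\times\Gamma$ from the discussion after Theorem~\ref{thm:borel32}. The first follows from Hurewicz and the commutativity of $\pi_1$ of an $H$-space; the second was established above. Nothing here depends on the higher structure of the exterior pair beyond Theorem~\ref{thm:whichpairs}, so I expect no genuine obstacle.

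Alternatively, one could avoid Theorem~\ref{thm:whichpairs} and argue directly. By universal coefficients, $H_1(G;R)\supseteq H_1(G;\Z)\otimes R$, and this contains $\Gamma\otimes R$. Since $H_1(G;R)$ is free, being the degree-one part of an exterior algebra, the summand $\Z/p^e\otimes R=R/p^eR$ must be torsion-free; but it is annihilated by $p^e$, so it must vanish. This forces $p$ to be a unit. I would nonetheless use the first route as the main proof.
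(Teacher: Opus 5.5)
Your main route is correct and is essentially the paper's argument: the paper gets $\Gamma=H_1(G_{ss};\Z)$ as a direct summand of $H_1(G;\Z)$ from $G\cong T^k\times G_{ss}$, where you use Hurewicz, and in both cases Theorem~\ref{thm:whichpairs} then makes every prime divisor of $|\Gamma|$ a unit in $R$. Do not rely on your alternative route, though: a free $R$-module can contain a nonzero copy of $R/p^eR$ when multiplication by $p$ on $R$ is not injective (for $R=\F_2$, $H_1(\mathrm{SO}(3);\F_2)\cong\F_2$ is free and contains $\Z/2\otimes\F_2\neq0$), so freeness of $H_1(G;R)$ alone cannot force $p$ to be a unit, and the full content of Theorem~\ref{thm:whichpairs} is needed.
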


\begin{proof}
Let $p\nmid|\Gamma|$. Since $G\cong T^k\times G_{ss}$ with $H_*(T^k;\Z)$ free,
$H_1(G_{ss};\Z)=\Gamma$ is a direct summand of $H_1(G;\Z)$. So by theorem 3.2 $p$ is a unit in $R$. A prime dividing $|\Gamma|$ is therefore a
unit in $R$, and so is $|\Gamma|$.
\end{proof}

\begin{lemma}[Translations]\label{lem:translation}
For a path-connected topological group $K$ and $g\in K$, the
translation $x\mapsto gx$ is homotopic to the identity, along
a path from $1$ to $g$. So the deck transformations of
$p\colon K\to G_{ss}$ act trivially on $H_*(K;R)$ and preserve
orientation.
\end{lemma}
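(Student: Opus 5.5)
Since $K$ is path-connected, we can fix a path $\gamma\colon[0,1]\to K$ with $\gamma(0)=1$ and $\gamma(1)=g$, and use
\[
H\colon K\times[0,1]\to K,\qquad H(x,t)=\gamma(t)\,x .
\]
This map is continuous because multiplication in $K$ is continuous. At the ends we have $H(-,0)=\mathrm{id}_K$ and $H(-,1)=L_g$, where $L_g(x)=gx$. So $L_g$ is homotopic to the identity, and homotopy invariance gives $(L_g)_*=\mathrm{id}$ on $H_*(K;R)$ for every commutative ring $R$.

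For the covering statement, recall from the discussion after Theorem~\ref{thm:borel32} that $\Gamma$ is a central discrete subgroup of $K$ and $G_{ss}=K/\Gamma$. The deck transformations of $p\colon K\to G_{ss}$ are therefore exactly the translations $x\mapsto\gamma x$ for $\gamma\in\Gamma$. Because $\Gamma$ is central, left and right translation by $\gamma$ agree, so there is no ambiguity about which side we translate on. Applying the first part with $g=\gamma$, each deck transformation acts as the identity on $H_*(K;R)$.

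It remains to check orientation. $K$ is a closed connected manifold; it is orientable because it is a Lie group, with an orientation obtained by translating one at the identity. Taking $R=\Z$ in the first part, $(L_\gamma)_*[K]=[K]$ in $H_d(K;\Z)\cong\Z$. A self-diffeomorphism of a closed connected oriented manifold preserves orientation exactly when it fixes the fundamental class, so each deck transformation is orientation-preserving.

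No step here is a real obstacle. The only points needing care are that deck transformations of $K\to K/\Gamma$ are translations by elements of $\Gamma$, and that $K$ is path-connected. The first follows from $\Gamma$ being central and $K$ connected. The second holds because $K$ is a connected manifold.
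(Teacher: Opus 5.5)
Your proof is correct and is the argument the paper intends; the lemma's statement itself names it: the homotopy $(x,t)\mapsto\gamma(t)x$ along a path from $1$ to $g$, applied to the translations by elements of the central subgroup $\Gamma$, which are all the deck transformations because $K$ is connected. Reading off orientation from the fact that these translations fix $[K]\in H_d(K;\Z)$ is exactly what the paper uses later to get $p_*[K]=|\Gamma|\,[G_{ss}]$ in Lemma~\ref{lem:intquot}.
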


\begin{proposition}[The covering]\label{prop:covering}
If $|\Gamma|$ is invertible in $R$, then
$p_*\colon H_*(K;R)\to H_*(G_{ss};R)$ is an isomorphism of
Hopf algebras.
\end{proposition}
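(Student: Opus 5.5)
The plan is to use the transfer of the finite regular covering $p\colon K\to G_{ss}=K/\Gamma$, a $|\Gamma|$-sheeted covering with deck group $\Gamma$. Such a covering has a transfer map $\tau\colon H_*(G_{ss};R)\to H_*(K;R)$, defined on singular chains by sending a simplex to the sum of its $|\Gamma|$ lifts. Two standard identities will drive the argument:
\[
p_*\circ\tau=|\Gamma|\cdot\id,\qquad \tau\circ p_*=\sum_{g\in\Gamma}(g\cdot)_*.
\]

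First I show that $p_*$ is bijective.
\begin{itemize}
\item By Lemma~\ref{lem:translation}, each deck transformation $x\mapsto gx$ is homotopic to the identity, so it acts trivially on $H_*(K;R)$.
\item The second identity then reads $\tau\circ p_*=|\Gamma|\cdot\id$.
\item Since $|\Gamma|$ is invertible in $R$, the map $|\Gamma|^{-1}\tau$ is a two-sided inverse of $p_*$.
\end{itemize}
So $p_*$ is an isomorphism of graded $R$-modules.

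Next I check that $p_*$ preserves the Hopf structure.
\begin{itemize}
\item $p$ is a homomorphism of topological groups, so $p\circ\mu_K=\mu_{G_{ss}}\circ(p\times p)$. Hence $p_*$ commutes with the Pontryagin products through the cross product.
\item $p$ commutes with the diagonals, $(p\times p)\circ\Delta_K=\Delta_{G_{ss}}\circ p$. Hence $p_*$ commutes with the coproducts.
\item $p$ preserves the unit and the augmentation, since $K$ and $G_{ss}$ are connected.
\end{itemize}
A bijective morphism of Hopf algebras is an isomorphism of Hopf algebras, because its inverse automatically respects the structure maps.

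The main subtlety is the coproduct, which exists only when the Künneth map $H_*(X;R)\otimes H_*(X;R)\to H_*(X\times X;R)$ is an isomorphism, so that the diagonal actually defines a Hopf algebra. I would handle it as follows.
\begin{itemize}
\item Since $p_*$ is already a module isomorphism, $H_*(K;R)$ is $R$-free exactly when $H_*(G_{ss};R)$ is.
\item In the case used afterwards, $(G,R)$ is exterior. Then $H_*(G_{ss};R)$ is a tensor factor of $H_*(G;R)\cong H_*(T^k;R)\otimes H_*(G_{ss};R)$, so it is $R$-projective. I expect it to be free in each degree, though proving that takes some extra work; that freedom is needed for the Künneth map to be an isomorphism.
\item Alternatively, I would phrase the compatibility with the coproduct before applying Künneth, as commutativity of the square through $H_*(K\times K)\to H_*(G_{ss}\times G_{ss})$, which holds unconditionally. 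Naturality of the Künneth map then transports it to the tensor products whenever both sides are Hopf algebras.
\end{itemize}

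Finally, the transfer identities are the routine input here. At chain level, $p_\#\tau$ multiplies by the number of sheets, and $\tau p_\#$ sums over the deck translates. I would cite these rather than reprove them.
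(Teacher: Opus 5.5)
Your proposal is correct and matches the paper's proof. The paper also defines the chain-level transfer that sums the $|\Gamma|$ lifts of a simplex, and uses $p_\#s_\#=|\Gamma|$ and $s_\#p_\#=\sum_{g\in\Gamma}g_\#$ together with Lemma~\ref{lem:translation} to obtain the inverse $|\Gamma|^{-1}s_*$; it then deduces compatibility with the Hopf structure from $p$ being a homomorphism. Your extra care with the coproduct, checking the square at the level of $H_*(K\times K)\to H_*(G_{ss}\times G_{ss})$ before applying K\"unneth, is harmless and slightly more careful than the paper, which takes the Hopf structures as given.
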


\begin{proof}
Since $\Delta^n$ is simply connected, a singular simplex
$\sigma\colon\Delta^n\to G_{ss}$ has $|\Gamma|$ lifts to $K$,
one through each point of the fibre over $\sigma(e_0)$. Define
the transfer
\[
s_\#\colon C_n(G_{ss};R)\to C_n(K;R),\qquad
s_\#(\sigma)=\sum_{\tilde\sigma}\tilde\sigma,
\]
the sum over the lifts of $\sigma$. As $\tilde\sigma$ runs over
the lifts of $\sigma$, the face $\tilde\sigma\circ d_j$ runs
over the lifts of $\sigma\circ d_j$, a lift being determined by
one value; hence
\[
\partial s_\#(\sigma)
=\sum_j(-1)^j\sum_{\tilde\sigma}\tilde\sigma\circ d_j
=\sum_j(-1)^j\,s_\#(\sigma\circ d_j)
=s_\#(\partial\sigma),
\]
and $s_\#$ is a chain map. Each lift projects to $\sigma$, and
the lifts of $p\circ\tilde\sigma$ are the translates
$g\tilde\sigma$, $g\in\Gamma$, so
\[
p_\#s_\#=|\Gamma|,\qquad s_\#p_\#=\sum_{g\in\Gamma}g_\# .
\]
The translations act trivially on homology
(Lemma~\ref{lem:translation}), so $p_*s_*=|\Gamma|=s_*p_*$, and
$p_*$ is an isomorphism with inverse $|\Gamma|^{-1}s_*$, of Hopf
algebras since $p$ is a homomorphism.
\end{proof}

\begin{corollary}[The universal cover is exterior]\label{cor:Kexterior}
If $(G,R)$ is an exterior pair, so is $(K,R)$.
\end{corollary}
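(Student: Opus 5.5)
The plan is to move the exterior structure from $G$ to $G_{ss}$, and then from $G_{ss}$ to $K$ through Proposition~\ref{prop:covering}.

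\emph{Step 1: $|\Gamma|$ is a unit.} By Lemma~\ref{lem:gammaunit}, $|\Gamma|$ is a unit in $R$, so Proposition~\ref{prop:covering} makes $p_*\colon H_*(K;R)\to H_*(G_{ss};R)$ an isomorphism of Hopf algebras. It therefore suffices to show that $(G_{ss},R)$ is exterior.

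\emph{Step 2: $(G_{ss},R)$ is exterior.} I would use Theorem~\ref{thm:whichpairs} rather than work with Hopf algebras directly. Since $(G,R)$ is exterior, $H_*(G;\Z)$ has no $p$-torsion for any prime $p$ that is not a unit in $R$. By Theorem~\ref{thm:borel32}, $G$ is diffeomorphic to $T^k\times G_{ss}$. Because $G_{ss}$ is a retract of $G$, $H_*(G_{ss};\Z)$ is a direct summand of $H_*(G;\Z)$, so it also has no $p$-torsion for such primes. Applying Theorem~\ref{thm:whichpairs} to the compact connected group $G_{ss}$ then shows that $(G_{ss},R)$ is exterior, with the classical exponents.

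\emph{Step 3: transfer to $K$.} Since $p_*$ is an isomorphism of Hopf algebras, it carries primitives to primitives and preserves degrees. The inverse $p_*^{-1}$ therefore sends the exterior presentation $\Lambda_R(a_1,\dots,a_r)$ of $H_*(G_{ss};R)$ on odd primitive generators to an exterior presentation of $H_*(K;R)$ of the same form, so $(K,R)$ is exterior.

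As a cross-check, one could avoid the covering proposition. The argument would run through Theorem~\ref{thm:whichpairs} for $K$: one needs that $H_*(K;\Z)$ has no $p$-torsion for primes $p$ that are non-units in $R$. Such $p$ do not divide $|\Gamma|$, and the transfer argument of Proposition~\ref{prop:covering} over $\Z_{(p)}$ identifies $H_*(K;\Z_{(p)})$ with $H_*(G_{ss};\Z_{(p)})$.

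The main obstacle I expect is exactly the retract step in Step 2. It requires that the homeomorphism of Theorem~\ref{thm:borel32} really exhibits $G_{ss}$ as a retract, which the product decomposition gives. It also requires that the torsion-freeness criterion of Theorem~\ref{thm:whichpairs} is stated for arbitrary compact connected groups, which it is. No BV-algebra input is needed.
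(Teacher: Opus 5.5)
Your proposal is correct and is essentially the paper's proof. Lemma~\ref{lem:gammaunit} with Proposition~\ref{prop:covering} reduces the question to $G_{ss}$, and Theorem~\ref{thm:whichpairs} makes $(G_{ss},R)$ exterior because $G\cong T^k\times G_{ss}$ controls the torsion. The only difference is cosmetic: you use that $G_{ss}$ is a retract of $G$, which gives the one inclusion of torsion you need, whereas the paper uses the K\"unneth decomposition with the free $H_*(T^k;\Z)$ to see that the two have the same torsion.
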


\begin{proof}
Since $G\cong T^k\times G_{ss}$ with $H_*(T^k;\Z)$ free,
$H_*(G;\Z)\cong H_*(T^k;\Z)\otimes H_*(G_{ss};\Z)$ as graded
$\Z$-modules, and the two have the same torsion; so $(G_{ss},R)$ is exterior together
with $(G,R)$, the criterion of Theorem~\ref{thm:whichpairs}
being a condition on the torsion. By
Lemma~\ref{lem:gammaunit}, $|\Gamma|$ is a unit in $R$, and
Proposition~\ref{prop:covering} gives
$H_*(K;R)\cong H_*(G_{ss};R)$ as Hopf algebras.
\end{proof}

Theorem~\ref{thm:whichpairs}, Proposition~\ref{prop:census},
Proposition~\ref{prop:covering} and Corollary~\ref{cor:Kexterior}
give the list of exterior pairs.

\begin{corollary}[The groups in play]\label{thm:borel}
Let $G\cong T^k\times K/\Gamma$ be a connected compact Lie group,
$K=K_1\times\cdots\times K_s$ with $K_i$ simple, and $R$ a
commutative ring. Then $(G,R)$ is an exterior pair if and only
if $|\Gamma|$ is a unit in $R$ and every $(K_i,R)$ is an
exterior pair. In particular:
\begin{enumerate}
\item $(G,\Q)$ is an exterior pair.
\item $(G,\Z)$ is an exterior pair if and only if $H_*(G;\Z)$ is
torsion-free, if and only if $\Gamma=1$ and every $K_i$ is an
$\SU(n)$ or an $\Sp(n)$; $G$ is then diffeomorphic to
$T^k\times\prod_i\SU(n_i)\times\prod_j\Sp(m_j)$.
\item $(G,\F_p)$ is an exterior pair if and only if
$p\nmid|\Gamma|$ and every $(K_i,\F_p)$ is
(Proposition~\ref{prop:census}).
\end{enumerate}
\end{corollary}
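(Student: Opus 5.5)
The plan is to reduce everything to the torsion criterion of Theorem~\ref{thm:whichpairs}, using the splitting $G\cong T^k\times K/\Gamma$ of Theorem~\ref{thm:borel32}. Write $G_{ss}=K/\Gamma$.

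For the forward direction, suppose $(G,R)$ is exterior. Lemma~\ref{lem:gammaunit} shows that $|\Gamma|$ is a unit in $R$. Corollary~\ref{cor:Kexterior} shows that $(K,R)$ is exterior. The discussion preceding Theorem~\ref{thm:borel25} shows that $(K,R)$ is exterior exactly when every $(K_i,R)$ is, since each $K_i$ is a retract of $K$ and the Künneth argument works over $\Z_{(p)}$.

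For the converse, suppose $|\Gamma|$ is a unit and every $(K_i,R)$ is exterior. Then $(K,R)$ is exterior by the same reduction. Proposition~\ref{prop:covering} gives $H_*(G_{ss};R)\cong H_*(K;R)$ as Hopf algebras, so $(G_{ss},R)$ is exterior. Now take a prime $p$ that is not a unit in $R$.
\begin{itemize}
\item By Theorem~\ref{thm:whichpairs}, $H_*(G_{ss};\Z)$ has no $p$-torsion.
\item Since $H_*(T^k;\Z)$ is free, the Künneth theorem gives $H_*(G;\Z)\cong H_*(T^k;\Z)\otimes H_*(G_{ss};\Z)$ with no Tor terms. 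So the torsion of $H_*(G;\Z)$ is a direct sum of shifted copies of the torsion of $H_*(G_{ss};\Z)$, and $H_*(G;\Z)$ has no $p$-torsion either.
\end{itemize}
Theorem~\ref{thm:whichpairs} then shows that $(G,R)$ is exterior. This is exactly the step used in the proof of Corollary~\ref{cor:Kexterior}, run in the other direction.

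The special cases follow from the main equivalence.
\begin{enumerate}
\item For $R=\Q$, every integer is a unit and every $(K_i,\Q)$ is exterior, so $(G,\Q)$ is exterior.
\item For $R=\Z$, the first equivalence is Theorem~\ref{thm:whichpairs}(2). By the main equivalence, $(G,\Z)$ is exterior iff $|\Gamma|$ is a unit in $\Z$, that is $\Gamma=1$, and every $(K_i,\Z)$ is exterior. By Proposition~\ref{prop:census} the latter means each $K_i$ is an $\SU(n)$ or an $\Sp(n)$. When $\Gamma=1$ we have $G_{ss}=K$, so Theorem~\ref{thm:borel32} together with the smoothness remark after it gives the stated diffeomorphism type.
\item For $R=\F_p$, $|\Gamma|$ is a unit iff $p\nmid|\Gamma|$, and Proposition~\ref{prop:census} decides whether each $(K_i,\F_p)$ is exterior.
\end{enumerate}

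I expect the only delicate point to be the converse. There, "exterior" for $G_{ss}$ must be transported to $G$ through the torsion criterion, not through a Hopf algebra Künneth argument over $R$. Going through integral torsion, as above, avoids any flatness issues over a general $R$.
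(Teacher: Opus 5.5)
Your proposal is correct and takes the paper's route: the paper's proof only says that Theorem~\ref{thm:whichpairs}, Proposition~\ref{prop:census}, Proposition~\ref{prop:covering} and Corollary~\ref{cor:Kexterior} give the list. You spell out how these combine, including the converse through the integral torsion criterion and the K\"unneth argument for the torus factor, which is the step in the proof of Corollary~\ref{cor:Kexterior} read backwards.
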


\section{The intersection side}\label{sub:intersection}

In this section, we will determine the intersection ring
$\HH_*(G;R)$ of an exterior pair
(Definition \ref{def:exterior}), as well as the Pontryagin action $\delta_i$ on it (Lemma \ref{lem:intersection}), which are needed for Definition \ref{def:hepworth2}. Then, we will compare the intersection rings of $K$ and $G_{ss}=K/\Gamma$, along with their respective Pontryagin actions, using the covering map (Lemma \ref{lem:intquot}).
\subsection{The intersection ring of an exterior pair}\label{sub:intext}

We describe the intersection ring of an exterior pair $(G,R)$ in
terms of the monomials in the generators of $H_*(G;R)$ and their
duals in $H^*(G;R)$.

\begin{definition}[Monomial notation]
\label{rem:symbols}
Let $(G,R)$ be an exterior pair, $H_*(G;R)=\Lambda_R(a_1,\dots,a_r)$
as in Definition~\ref{def:exterior}, and $I=\{1,\dots,r\}$. For
$S\subseteq I$ let
\[
a_S:=\textstyle\prod_{i\in S}a_i,
\]
the product in increasing order of the indices, and
$a_\varnothing=1$. The set $\{a_S\}_{S\subseteq I}$ is an $R$-basis of
$H_*(G;R)$, and since the $a_i$'s are primitive,
\[
\psi(a_S)=\sum_{S=S'\sqcup S''}\pm\,a_{S'}\otimes a_{S''},
\]
with the Koszul signs of the shuffles. Let $x_S\in H^*(G;R)$ be
the dual basis under the Kronecker pairing,
$\langle x_S,a_T\rangle=\delta_{ST}$. Then
\[
H^*(G;R)=\Lambda_R(x_1,\dots,x_r),
\qquad
x_S=\textstyle\prod_{i\in S}x_i .
\]
\end{definition}

\noindent For every exterior pair $(G,R),$ we know that $H_*(G;R)$ is a Hopf algebra. Therefore, Definition~\ref{def:hepworth2} and Proposition~\ref{prop:hepworth3} can be applied with $p_i=a_i$ and $p^i=x_i.$

\begin{definition}[Intersection classes]
\label{def:alpha}
We keep the hypothesis and notations from the previous Definition~\ref{rem:symbols}. Let $d=\dim G$. We orient $G$ by $[G]:=a_I$, and let
$\HH_*(G;R)=H_{*+d}(G;R)$ be the intersection ring,
$(x\cap[G])\cdot(y\cap[G])=(xy)\cap[G]$. For $S\subseteq I$ set
\[
\alpha_S\;:=\;x_S\cap[G]\;\in\;\HH_{-|a_S|}(G;R),
\qquad
\alpha_i\;:=\;\alpha_{\{i\}} .
\]
\end{definition}

\noindent Then $\alpha_S=\alpha_{i_1}\cdots\alpha_{i_k}$ for
$S=\{i_1<\dots<i_k\}$, $\alpha_\varnothing=[G]$ is the unit, and
with the cap product $\langle y,x\cap c\rangle=\langle xy,c\rangle$,
\[
\langle x_T,\alpha_S\rangle
=\langle x_Sx_T,\,a_I\rangle
=\pm\,\delta_{T,\,I\smallsetminus S},
\qquad\text{so}\qquad
\alpha_S=\pm\,a_{I\smallsetminus S}.
\]
Hence $\{\alpha_S\}_{S\subseteq I}$ forms a basis and
\[
\HH_*(G;R)\;=\;\Lambda_R(\alpha_1,\dots,\alpha_r).
\]
For $S=\{i\}$,
$\langle x_{I\smallsetminus\{i\}},\alpha_i\rangle
=\langle x_ix_{I\smallsetminus\{i\}},a_I\rangle=(-1)^{i-1}$, so
$\alpha_i=(-1)^{i-1}a_{I\smallsetminus\{i\}}$, and
$a_ia_{I\smallsetminus\{i\}}=(-1)^{i-1}a_I$ gives
\[
a_i\alpha_i\;=\;[G].
\]
Note that the sign in $\alpha_S=\pm\,a_{I\smallsetminus S}$ 
is determined by the Koszul sign for reordering odd classes and does not play any role. A similar presentation of the case for special unitary groups is found in Tamanoi \cite[Lemma~2-3]{Tamanoi}.

\begin{lemma}[Pontryagin action on the intersection ring]\label{lem:intersection}
Let $(G,R)$ be an exterior pair. For the basis $p_i=a_i$,
Hepworth's derivations $\delta_i$ of
Definition~\ref{def:hepworth2} are the partial derivatives of
$\HH_*(G;R)=\Lambda_R(\alpha_1,\dots,\alpha_r)$, the derivations
of degree $|a_i|$ with
$\partial\alpha_j/\partial\alpha_i=\delta_{ij}$:
\[
\delta_i\;=\;a_i\,(-)\;=\;\frac{\partial}{\partial\alpha_i}.
\]
\end{lemma}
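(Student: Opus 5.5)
The plan has two steps: show that each $\delta_i$ is a derivation of $\HH_*(G;R)=\Lambda_R(\alpha_1,\dots,\alpha_r)$ of degree $|a_i|$, and compute its values on the generators $\alpha_j$. A derivation of an exterior algebra, with the Koszul sign convention, is determined by its values on generators. So once $\delta_i(\alpha_j)=\delta_{ij}[G]=\delta_{ij}$ is known (recall $[G]$ is the unit), we get $\delta_i=\partial/\partial\alpha_i$.

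For the derivation property I would quote Proposition~\ref{prop:hepworth3}. That applies since $H_*(G;R)$ is a Hopf algebra with odd primitive basis $p_i=a_i$ and dual classes $p^i=x_i$. As a self-contained check, I would also translate the Pontryagin action into cohomology. The Pontryagin action of a primitive $a$ on $x\cap[G]$ should equal $\pm(a\cap x)\cap[G]$, where $a\cap x$ denotes the slant action of homology on cohomology dual to the Pontryagin product. This needs two facts: that $[G]=a_I$ is the top class, and the Hopf-algebra identity $\langle y, a\cdot c\rangle=\sum\pm\langle y_{(1)},a\rangle\langle y_{(2)},c\rangle$. Primitivity of $a$ makes the contraction $x\mapsto a\cap x$ a (graded) derivation of $H^*(G;R)=\Lambda_R(x_1,\dots,x_r)$. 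This is because the coproduct on $H^*$ is dual to the Pontryagin product, and $a$ primitive means $\langle -,a\rangle$ vanishes on decomposables, so the contraction is $\partial/\partial x_i$ for $a=a_i$. Transporting through the ring isomorphism $x\mapsto x\cap[G]$ then gives the derivation property directly.

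For the values on generators, I would compute $a_i\alpha_j$ with $\alpha_j=(-1)^{j-1}a_{I\smallsetminus\{j\}}$ in the exterior algebra $H_*(G;R)$. If $i\neq j$, then $a_i$ occurs in $a_{I\smallsetminus\{j\}}$, so $a_i\alpha_j=0$ because $a_i^2=0$. This needs $|a_i|$ odd and $R$ arbitrary: in an exterior algebra $a_i^2=0$ by definition. If $i=j$, we already have the identity $a_i\alpha_i=[G]$ from the discussion after Definition~\ref{def:alpha}. Hence $\delta_i(\alpha_j)=\delta_{ij}[G]$. Degrees match: $\alpha_j\in\HH_{-|a_j|}$, and $\delta_i$ raises degree by $|a_i|$.

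The main obstacle is the sign convention in the derivation property. The derivation rule in $\HH_*$ with the shifted grading must match the Koszul rule that defines $\partial/\partial\alpha_i$. A generator $\alpha_j$ has odd degree $-|a_j|$ in both gradings, so parity is unaffected by the shift by $d$ only if I use the shifted degree consistently. I would check this on a product $\alpha_j\alpha_k=\pm a_{I\smallsetminus\{j,k\}}$ directly: multiply by $a_i$ and compare with $\delta_i(\alpha_j)\alpha_k-\alpha_j\delta_i(\alpha_k)$. More generally I would verify $a_i\alpha_S=\pm\alpha_{S\smallsetminus\{i\}}$ with the sign $(-1)^{\text{position of } i\text{ in }S-1}$. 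That suffices, because both sides are then determined on the basis $\{\alpha_S\}$.
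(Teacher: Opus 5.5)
Your main line is the paper's proof: the derivation property from Proposition~\ref{prop:hepworth3}, the values $a_i\alpha_j=\delta_{ij}[G]$ on generators, and uniqueness of a derivation with prescribed values on generators. The sign check you single out as the main obstacle, however, does not come out as you predict.

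The cause is not the shift by $d$ but the cap product. With the normalizations of Definitions~\ref{rem:symbols} and~\ref{def:alpha} ($\langle y,x\cap c\rangle=\langle xy,c\rangle$, $x_S$ dual to $a_S$, $[G]=a_I$), one has $\langle y,a_i(x\cap[G])\rangle=\langle x\,\partial y/\partial x_i,[G]\rangle$. Also $\langle\partial(xy)/\partial x_i,[G]\rangle=\langle xy,a_i[G]\rangle=0$. The Leibniz rule for $\partial/\partial x_i$ then gives
\[
a_i\cdot(x\cap[G])\;=\;(-1)^{|x|+1}\Big(\frac{\partial x}{\partial x_i}\Big)\cap[G].
\]
So the $\pm$ in your self-contained check depends on $|x|$. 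Transporting the left derivation $\partial/\partial x_i$ along $x\mapsto x\cap[G]$ therefore gives a \emph{right} derivation of the product $(x\cap[G])\cdot(y\cap[G])=(xy)\cap[G]$. Concretely, for $r=2$ the paper's own formulas give $\alpha_2=-a_1$ and $\alpha_1\alpha_2=\alpha_{\{1,2\}}=a_\varnothing=1$, hence
\[
a_1(\alpha_1\alpha_2)=a_1=-\alpha_2,\qquad\text{whereas}\qquad\frac{\partial(\alpha_1\alpha_2)}{\partial\alpha_1}=\alpha_2 .
\]
In general $a_i\alpha_S=(-1)^{|S|+p}\alpha_{S\smallsetminus\{i\}}$, with $p$ the position of $i$ in $S$. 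Your predicted sign $(-1)^{p-1}$ therefore fails on every even-length $S$ containing $i$.

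Consequently, Proposition~\ref{prop:hepworth3} cannot be used verbatim with the product of Definition~\ref{def:alpha}. There, as soon as $r\geq2$, $a_1(-)$ is not a derivation in the Koszul sense that defines $\partial/\partial\alpha_1$. Quoting the proposition, as you and the paper both do, tacitly imports a differently signed intersection product. This is not one of the harmless signs in $\alpha_S=\pm a_{I\smallsetminus S}$.

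The repair is a change of convention. For the product $(x\cap[G])\bullet(y\cap[G])=(-1)^{|x||y|}(xy)\cap[G]$, the displayed formula makes $a_i(-)$ a left derivation with $a_i\alpha_j=\delta_{ij}[G]$, and the lemma holds as stated. Once the sign is tracked this way, your contraction argument is a complete, self-contained proof. It is preferable to the citation because it pins down the convention in which the statement is true.
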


\begin{proof}
The Pontryagin action $\delta_i$ of $a_i$ is a derivation of
degree $|a_i|$ (Proposition~\ref{prop:hepworth3}). On the
generators, $a_i\alpha_j=0$ for $j\neq i$, the factor $a_i$
repeating, and $a_i\alpha_i=[G]$ (Definition~\ref{def:alpha}).
These are the values of $\partial/\partial\alpha_i$, and two
derivations of equal degree agreeing on the generators are
equal.
\end{proof}

\subsection{The quotient}\label{sub:intquot}

The covering $p\colon K\to G_{ss}$ of Section~\ref{sub:cover}
carries the intersection ring along, up to the factor
$|\Gamma|$.

\begin{lemma}[The intersection ring of the quotient]\label{lem:intquot}
Let $|\Gamma|$ be invertible in $R$ and $H_*(K;R)$ free, and
orient $G_{ss}$ so that $p$ preserves orientation. Then
\[
\varphi\;:=\;|\Gamma|^{-1}\,p_*\colon
\HH_*(K;R)\longrightarrow\HH_*(G_{ss};R)
\]
is an isomorphism of intersection rings with
$\varphi[K]=[G_{ss}]$, and
$\varphi(ax)=p_*(a)\,\varphi(x)$ for
$a\in H_*(K;R)$ and $x\in\HH_*(K;R)$.
\end{lemma}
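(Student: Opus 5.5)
The plan is to work entirely on the level of ordinary homology and transport Poincaré duality through the covering, using the transfer $s_\#$ built in the proof of Proposition~\ref{prop:covering}. That proof gives $p_*s_*=|\Gamma|$ and $s_*p_*=|\Gamma|$ on $H_*(-;R)$ in all degrees, so $p_*\colon H_*(K;R)\to H_*(G_{ss};R)$ is an $R$-linear isomorphism. Hence $\varphi=|\Gamma|^{-1}p_*$ is a bijection $\HH_*(K;R)\to\HH_*(G_{ss};R)$; since $\dim K=\dim G_{ss}=d$, the degree shifts agree. The identity $\varphi(ax)=p_*(a)\varphi(x)$ is immediate, because $p$ is a homomorphism: $p_*$ is multiplicative for the Pontryagin product, so $p_*(ax)=p_*(a)p_*(x)$, and we divide by $|\Gamma|$.

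For the fundamental class, I would use that a degree-$|\Gamma|$ orientation-preserving covering of closed manifolds has $p_*[K]=|\Gamma|\,[G_{ss}]$. Over $R$ this is the orientation convention we chose. Alternatively, the transfer of a fundamental cycle of $G_{ss}$ is a fundamental cycle of $K$, which gives $s_*[G_{ss}]=[K]$, and then $p_*[K]=p_*s_*[G_{ss}]=|\Gamma|[G_{ss}]$. Either way $\varphi[K]=[G_{ss}]$.

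The main step is multiplicativity for the intersection product. I would express it through cap products and the cohomology pullback $p^*$. For $u\in H^*(G_{ss};R)$, naturality of the cap product gives
\[
p_*(p^*u\cap[K])\;=\;u\cap p_*[K]\;=\;|\Gamma|\,(u\cap[G_{ss}]),
\]
so $\varphi(p^*u\cap[K])=u\cap[G_{ss}]$. Next, $p^*\colon H^*(G_{ss};R)\to H^*(K;R)$ is an isomorphism. Its inverse is $|\Gamma|^{-1}s^*$, dual to the chain-level identities, where the deck translations act trivially by Lemma~\ref{lem:translation}; here we use that $H_*(K;R)$ is free, so cohomology is the dual of homology. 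Every $x\in\HH_*(K;R)$ is therefore $p^*u\cap[K]$ for a unique $u$. Taking $x=p^*u\cap[K]$ and $y=p^*v\cap[K]$, and using that $p^*$ is a ring map,
\[
\varphi(x\cdot y)=\varphi\bigl(p^*(uv)\cap[K]\bigr)=uv\cap[G_{ss}]=\varphi(x)\cdot\varphi(y),
\]
by the definition of the intersection product in Definition~\ref{def:alpha}. The unit $[K]$ goes to $[G_{ss}]$.

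The only delicate point is the fundamental-class relation $p_*[K]=|\Gamma|[G_{ss}]$ with $R$ coefficients. Both orientations are defined algebraically: $[K]=a_I$ on one side, and on the other side the orientation of $G_{ss}$ is chosen so that $p$ preserves it. I would first check this at the level of $\Z$ or local degrees, namely that each point has $|\Gamma|$ preimages, all of them orientation-preserving by Lemma~\ref{lem:translation}, and then base change to $R$. Invertibility of $|\Gamma|$ in $R$ is what lets us normalize.
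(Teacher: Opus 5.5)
Your proposal is correct and follows essentially the same route as the paper: bijectivity from Proposition~\ref{prop:covering}, $p_*[K]=|\Gamma|\,[G_{ss}]$ from the orientation-preserving deck translations of Lemma~\ref{lem:translation}, multiplicativity from the projection formula $p_*(p^*\zeta\cap[K])=|\Gamma|\,\zeta\cap[G_{ss}]$ applied to classes written as $p^*\xi\cap[K]$, and the Pontryagin identity from $p$ being a homomorphism. The only cosmetic difference is that you invert $p^*$ with the cochain transfer $|\Gamma|^{-1}s^*$, which does not need freeness, whereas the paper gets $p^*$ as an isomorphism by dualizing $p_*$ using that the homologies are free.
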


\begin{proof}
The map $p$ is a covering of degree $|\Gamma|$ whose deck
transformations, the translations by $\Gamma$, preserve
orientation (Lemma~\ref{lem:translation}); hence
$p_*[K]=|\Gamma|\,[G_{ss}]$ and
$\varphi[K]=[G_{ss}]$. By Proposition~\ref{prop:covering},
$p_*$ is an isomorphism on homology, so $\varphi$ is
bijective, and $p^*$ is an isomorphism of cohomology rings,
the homologies being free.

Every class of $\HH_*(K;R)$ is $p^*\xi\cap[K]$ for some
$\xi\in H^*(G_{ss};R)$. Let $x=p^*\xi\cap[K]$ and
$y=p^*\eta\cap[K]$, so that $x\cdot y=p^*(\xi\cup\eta)\cap[K]$,
the intersection product being dual to the cup product. The
projection formula gives
\[
p_*\big(p^*\zeta\cap[K]\big)=\zeta\cap p_*[K]
=|\Gamma|\,\zeta\cap[G_{ss}]
\]
for every $\zeta\in H^*(G_{ss};R)$; applied to $\xi\cup\eta$,
$\xi$ and $\eta$ it reads
$p_*(x\cdot y)=|\Gamma|^{-1}\,p_*x\cdot p_*y$, that is,
$\varphi(x\cdot y)=\varphi(x)\varphi(y)$.

The Pontryagin action is induced by the multiplication, and
$p$ is a homomorphism, so $p_*(ax)=p_*(a)\,p_*(x)$; hence
$\varphi(ax)=|\Gamma|^{-1}p_*(a)\,p_*(x)=p_*(a)\,\varphi(x)$.
\end{proof}

\section{The loop side}\label{sub:frame}

In this section we compute the Pontryagin ring
$H_*(\loops G;R)$, which is the loop factor of
Theorem~\ref{thm:hepworth1}. After splitting the torus factor from $G$, the
covering $K\to G_{ss}$ gives an isomorphism of Hopf algebras
$H_*(\loops G_{ss};R)\cong R[\Gamma]\otimes H_*(\loops K;R)$
(Proposition~\ref{prop:loopsplit}), and we use the cobar construction
to show $H_*(\loops K;R)$ is a polynomial ring
(Proposition~\ref{prop:frame}).

\subsection{The loop space of a compact group}\label{sub:loopspace}

By Theorem~\ref{thm:borel32}, $G$ is diffeomorphic to
$T^k\times G_{ss}$. Loops commute with products and
$H_*(\loops T^k;R)$ is free, so the K\"unneth map gives
\[
H_*(\loops G;R)\;\cong\;H_*(\loops T^k;R)\otimes
H_*(\loops G_{ss};R).
\]
The torus factor is immediate: $\loops T^k\simeq\pi_1(T^k)=\Z^k$,
so
\[
H_*(\loops T^k;R)\;=\;R[\Z^k],
\]
in degree zero. The semisimple factor is
$G_{ss}=K/\Gamma$, with $K$ compact simply connected and
$\Gamma=\pi_1(G_{ss})=p^{-1}(1)$ finite, for the covering
$p\colon K\to G_{ss}$ (Section~\ref{sub:cover}).

We use the covering $p$ to describe $\loops G_{ss}$ in terms of
$\loops K$ and $\Gamma$. Let $P_\Gamma K$
be the space of paths $\omega\colon[0,1]\to K$ with $\omega(0)=1$
and $\omega(1)\in\Gamma$, with the compact-open topology and the
pointwise product and inverse
\[
(\omega\cdot\omega')(t)=\omega(t)\,\omega'(t),
\qquad
\omega^{-1}(t)=\omega(t)^{-1},
\]
since $\Gamma$ is a subgroup, so $P_\Gamma K$ is a topological group.
On the loop space of a topological group the pointwise product
and the concatenation are homotopic, so both give rise to the
Pontryagin product.

Composition with $p$ is a continuous homomorphism
$P_\Gamma K\to\loops G_{ss}$, $\omega\mapsto p\circ\omega$,
bijective with continuous inverse by the uniqueness of lifts:
\[
\loops G_{ss}\;\cong\;P_\Gamma K
\]
as topological groups. For $g\in\Gamma$ let $P_gK$ be the paths
ending at $g$, so $P_1K=\loops K$. The $P_gK$ are open and closed, being the fibres of the
endpoint map to $\Gamma$, and path-connected as $K$ is simply
connected. Therefore, they are each the path components of $\loops G_{ss}$.

Choose for each $g\in\Gamma$ a path $c_g$ from $1$ to $g$, with
$c_1$ constant. Right multiplication by $c_g$ and by $c_g^{-1}$
are mutually inverse homeomorphisms $\loops K\rightleftarrows P_gK$,
so
\[
\Phi\colon\loops K\times\Gamma\longrightarrow P_\Gamma K\cong\loops G_{ss},
\qquad(\omega,g)\longmapsto\omega\cdot c_g,
\]
is a homeomorphism. The map $\Phi$ is not a homomorphism, since
$\Phi(\omega,g)\Phi(\omega',h)=\omega c_g\omega'c_h$ while
$\Phi(\omega\omega',gh)=\omega\omega'c_{gh}$, but it is one up to
homotopy: for fixed $g,h\in\Gamma$ the two are homotopic as maps
$\loops K\times\loops K\to P_{gh}K$,
\[
H(s,\omega,\omega')(t)\;=\;\omega(t)\cdot
c_g(st)\,\omega'(t)\,c_g(st)^{-1}\cdot c_g(t)\,c_h(t),
\qquad 0\leq s\leq1,
\]
is a homotopy from $\omega\omega'c_gc_h$ to
$\omega c_g\omega'c_h$, and a path in $P_{gh}K$ from $c_{gh}$ to
$c_gc_h$, multiplied on the left by $\omega\omega'$, is a
homotopy from $\omega\omega'c_{gh}$ to $\omega\omega'c_gc_h$.
Hence $\Phi_*$ is multiplicative, which gives:

\begin{proposition}[The loop space of the semisimple part]\label{prop:loopsplit}
For every commutative ring $R$, $\Phi$ establishes an isomorphism
of Hopf algebras
\[
R[\Gamma]\otimes H_*(\loops K;R)\;\xrightarrow{\ \cong\ }\;
H_*(\loops G_{ss};R),
\]
where $R[\Gamma]$ denotes the group-ring of $\Gamma$ in degree zero.
\end{proposition}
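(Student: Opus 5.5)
The plan is to show that $\Phi_*$, composed with the Künneth map, is a bijection of $R$-modules, that it is multiplicative, and that it is comultiplicative. Each of these comes almost directly from the homotopy properties of $\Phi$ established just before the statement.

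\emph{Bijection.} Since $\Phi\colon\loops K\times\Gamma\to\loops G_{ss}$ is a homeomorphism and $\Gamma$ is discrete, we get
\[
H_*(\loops K\times\Gamma;R)\cong\bigoplus_{g\in\Gamma}H_*(\loops K;R)\cong R[\Gamma]\otimes H_*(\loops K;R).
\]
This identification is just a direct sum over the components, so no Künneth torsion issue arises, and it is valid for every $R$. Concretely, $g\otimes a$ is sent to $\Phi_*(a\times[g])=(\cdot c_g)_*(a)$, the image of $a$ under right translation by $c_g$ into the component $P_gK$. Because $\Phi$ is a homeomorphism, the resulting map is an isomorphism of graded $R$-modules.

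\emph{Multiplicativity.} The product on the source is $(g\otimes a)(h\otimes b)=gh\otimes ab$, which on spaces is $((\omega,g),(\omega',h))\mapsto(\omega\omega',gh)$. The homotopy $H$ constructed before the statement, combined with a path in $P_{gh}K$ from $c_{gh}$ to $c_gc_h$ (this exists because $P_{gh}K$ is path-connected, as $K$ is simply connected), shows that for each fixed pair $(g,h)$ the two maps
\[
\Phi\circ\mu_{\loops K\times\Gamma}
\qquad\text{and}\qquad
\mu_{\loops G_{ss}}\circ(\Phi\times\Phi),
\]
restricted to $\loops K\times\{g\}\times\loops K\times\{h\}$, are homotopic. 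Homotopic maps agree in homology, and the components are disjoint, so $\Phi_*$ is multiplicative for the pointwise product. The pointwise product on $\loops G_{ss}\cong P_\Gamma K$ is homotopic to concatenation, as already remarked, so $\Phi_*$ is multiplicative for the Pontryagin product. The unit $1\otimes[\mathrm{pt}]$ goes to the class of the constant loop, since $c_1$ is constant.

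\emph{Comultiplicativity and conclusion.} $\Phi$ is a map of spaces, so it commutes strictly with diagonals. With the coproduct on the source induced by the diagonal of $\loops K\times\Gamma$, namely $g\mapsto g\otimes g$ on $R[\Gamma]$ tensored with $\psi$ on $H_*(\loops K;R)$, the map $\Phi_*$ is a coalgebra map. Here one has to check that the Künneth map $H_*(\loops G_{ss})\otimes H_*(\loops G_{ss})\to H_*(\loops G_{ss}\times\loops G_{ss})$ used to define $\psi$ is an isomorphism. It is, because $H_*(\loops K;\Z)$ is torsion-free by Bott, so $H_*(\loops K;R)$ is free, and hence so is $H_*(\loops G_{ss};R)$ componentwise. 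A multiplicative, comultiplicative bijection is an isomorphism of Hopf algebras, which proves the statement. The main point needing care is the multiplicativity step. One must make sure the homotopy $H$ actually lands in $P_{gh}K$ for all $s$: at $t=1$ the conjugation term $c_g(s)\omega'(1)c_g(s)^{-1}=1$, since $\omega'(1)=1$, so $H(s,\omega,\omega')(1)=gh$. One must also make sure it is continuous in all variables, which follows from continuity of the group operations and the compact-open topology on the path space of a locally compact $K$.
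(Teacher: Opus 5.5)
Your proposal is correct and follows the paper's argument. The paper gets the module isomorphism from the homeomorphism $\Phi$, gets multiplicativity from the homotopy $H$ together with a path from $c_{gh}$ to $c_gc_h$ in $P_{gh}K$, and treats comultiplicativity as automatic because $\Phi$ is a map of spaces; you additionally spell out the K\"unneth point (via Bott's torsion-freeness) and the endpoint check that $H$ lands in $P_{gh}K$, both of which the paper leaves implicit.
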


\noindent Putting the two factors together,
\[
H_*(\loops G;R)\;\cong\;R[\Z^k]\otimes R[\Gamma]\otimes
H_*(\loops K;R)
\]
as graded $R$-algebras, the first two factors in degree zero,
\[
R[\Z^k]\otimes R[\Gamma]\;\cong\;R[\Z^k\times\Gamma]\;\cong\;R[\pi_1(G)].
\]
\subsection{The cobar construction}\label{sub:cobar}

Now it remains to calculate $H_*(\loops K;R)$ which Adams' cobar construction will do \cite{Adams1956}.

Let $C$ be a differential graded coalgebra over $R$, free as an
$R$-module and connected, $C_0=R$, with differential $\partial$
and reduced coproduct $\bar\Delta(c)=\sum c'\otimes c''$ on
$\bar C=C_{\geq1}$. The cobar construction $\widehat\Omega C$
is the tensor algebra $T(s^{-1}\bar C)$ on the desuspension of
$\bar C$, that is, each element of $\widehat{\Omega}C$
can be written as a sum of words $\langle c_1|\cdots|c_s\rangle=s^{-1}c_1\otimes\cdots\otimes s^{-1}c_s$
of length $s$ and degree $\sum_i|c_i|-s$, its product is
concatenation, and its differential $d=d_0+d_1$ is the
derivation of the tensor algebra determined on letters by
\[
d_0\langle c\rangle=-\langle\partial c\rangle,
\qquad
d_1\langle c\rangle=\sum(-1)^{|c'|}\langle c'|c''\rangle .
\]
Both have degree $-1$; $d_0$ preserves the length of words and
$d_1$ raises it by one.

Let $X$ be simply connected and $C$ the normalized chains on the
singular simplices $\Delta^n\to X$ such that all vertices and edges of $\Delta^n$
are mapped into the basepoint, which has a coalgebra structure induced by the Alexander-Whitney diagonal. 
These normalized chains compute the homology
$H_*(X;R)$, since $X$ is simply connected \cite[Section~6.8 and the remarks following Theorem~10.16.2]{Neisendorfer}, and
$C_0=R$, $C_1=0$. Adams'
theorem is an isomorphism of graded algebras
\[
H\big(\widehat\Omega C\big)\;\cong\;H_*(\loops X;R).
\]
Every letter has degree at least one, so a word of length $s$
has degree at least $s$: the filtration by word length is
finite in each degree and yields a convergent multiplicative
spectral sequence. When $H_*(X;R)$ is free its $E_2$-page is
$\mathrm{Cotor}^{H_*(X;R)}(R,R)$, which is the homology of the cobar
construction of the homology coalgebra.

Let $(G,R)$ be an exterior pair. Then
$(K,R)$ is an exterior pair (Corollary~\ref{cor:Kexterior}) and
\[
H_*(K;R)\;=\;\Lambda_R(a_1,\dots,a_r),
\]
the $a_k$ primitive in odd degree of at least three since
$H_1(K;R)=0$. For $X=K$ the $E_2$-page of the word-length
spectral sequence above, converging to $H_*(\loops K;R)$, is
therefore the Cotor of the exterior coalgebra
$\Lambda_R(a_1,\dots,a_r)$, which is polynomial: this is the
Koszul duality between exterior and polynomial algebras
\cite[Example~2.2(2)]{Priddy}.

\begin{lemma}[Cotor of an exterior coalgebra]\label{lem:cotorext}
Let $\Lambda=\Lambda_R(a_1,\dots,a_r)$ with the $a_i$ primitive
of odd degrees $n_i\geq3$, viewed as a coalgebra. Then, under
concatenation,
\[
H\big(\widehat\Omega\Lambda\big)\;\cong\;R[y_1,\dots,y_r],
\qquad y_i=\big[\langle a_i\rangle\big],
\]
with $y_i$ of word length one and even total degree $n_i-1$.
\end{lemma}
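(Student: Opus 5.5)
We plan to reduce to the rank one case by a Künneth argument and then compute by hand. Since the $a_i$ are primitive, $\Lambda\cong\bigotimes_{i=1}^r\Lambda_R(a_i)$ as coalgebras, where $\Lambda_R(a_i)=R\oplus Ra_i$ has trivial reduced coproduct ($a_i^2=0$ and $a_i$ primitive). The cobar construction of a tensor product of connected coalgebras, each free over $R$, is quasi-isomorphic as a differential graded algebra to the tensor product of the cobar constructions. We would realize this through the Eilenberg--Zilber type map $\widehat\Omega(\Lambda_1)\otimes\widehat\Omega(\Lambda_2)\to\widehat\Omega(\Lambda_1\otimes\Lambda_2)$ induced by the two inclusions and multiplication. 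Alternatively, and more concretely, we would identify $\widehat\Omega\Lambda$ with the cobar construction computing $\mathrm{Cotor}^\Lambda(R,R)$, resolve $R$ by the tensor product of the rank-one Koszul resolutions, and use flatness, so that Künneth applies with no Tor terms.

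\textbf{Rank one.} For $\Lambda_R(a)$ with $|a|=n$ odd, $\bar\Lambda=Ra$ and $d_1\langle a\rangle=0$ since $a$ is primitive. The differential $d_0$ vanishes because $\partial=0$ on homology. So $\widehat\Omega\Lambda_R(a)=T(\langle a\rangle)=R[y]$ has zero differential and homology $R[y]$ with $|y|=n-1$ even. Because $|y|$ is even, no sign issues arise in concatenation, and the polynomial algebra is commutative.

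\textbf{Assembly.} Tensoring the rank-one answers gives $R[y_1]\otimes\cdots\otimes R[y_r]=R[y_1,\dots,y_r]$, with $y_i$ the class of the length-one word $\langle a_i\rangle$. The word-length grading is preserved.

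\textbf{Main obstacle.} The hard step is the Künneth/Eilenberg--Zilber comparison for cobar constructions over a general ring $R$. Note that the tensor algebra $T(s^{-1}(\bar\Lambda_1\oplus\bar\Lambda_2\oplus\bar\Lambda_1\otimes\bar\Lambda_2))$ is not literally the tensor product of the two tensor algebras.

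To handle this directly, we would use a homological perturbation / Koszul argument instead. Define the DGA map $R[y_1,\dots,y_r]\to\widehat\Omega\Lambda$ sending $y_i\mapsto\langle a_i\rangle$. This is well-defined since $d_1\langle a_ia_j\rangle=\pm\langle a_i|a_j\rangle\pm\langle a_j|a_i\rangle$, so $y_i$ and $y_j$ commute in homology. (Each $y_i$ has even degree, so the sign here is a genuine minus and the two words differ by a boundary.)

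We then check that this map is a quasi-isomorphism. Filter by the weight given by total number of $a$'s. The associated graded is the cobar complex of the exterior coalgebra, which is dual to the bar complex of the symmetric algebra. That bar complex is the standard Koszul complex of Priddy's Example 2.2(2), and it is acyclic beyond the Koszul diagonal over any $R$, since it is a tensor product of free rank-one Koszul complexes.
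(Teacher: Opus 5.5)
Your final argument is the paper's: map $R[y_1,\dots,y_r]$ into $H(\widehat\Omega\Lambda)$ by $y_i\mapsto[\langle a_i\rangle]$, which is well defined because $d\langle a_ia_j\rangle=\pm(\langle a_i|a_j\rangle-\langle a_j|a_i\rangle)$. Bijectivity then comes from the Koszul duality of Priddy's Example~2.2(2), which the paper cites without further argument. The K\"unneth/Eilenberg--Zilber route you open with is not needed.

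Where you try to justify the Koszul step yourself, three things go wrong.
\begin{itemize}
\item There is no DGA map $R[y]\to\widehat\Omega\Lambda$, since $\langle a_i\rangle$ and $\langle a_j\rangle$ do not commute in the tensor algebra. The map exists only into homology, so ``quasi-isomorphism'' must be replaced by bijectivity of the induced map $R[y]\to H(\widehat\Omega\Lambda)$.
\item The weight filtration is vacuous. The differential $d_1$ only splits $a_S$ into $a_{S'}\otimes a_{S''}$, so weight is a grading preserved by $d$, and the associated graded is the complex itself.
\item The tensor product of rank-one Koszul complexes is the Koszul resolution of $R$ over $\Lambda^\vee=\Lambda_R(x_1,\dots,x_r)$, not the bar complex of $\Lambda^\vee$. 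The bar complex of a tensor product of algebras is not the tensor product of the bar complexes; this is the same Eilenberg--Zilber issue you had set aside as the main obstacle.
\end{itemize}

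The repair is the remark in your first paragraph. Degreewise, $\widehat\Omega\Lambda\cong\mathrm{Hom}_{\Lambda^\vee}(B,R)$ for the normalized bar resolution $B$ of $R$ over $\Lambda^\vee$, with word length and weight preserved. Hence its homology is $\mathrm{Ext}_{\Lambda^\vee}(R,R)$. By the comparison theorem this can be computed from any weight-graded free resolution, for instance the tensor product of the periodic resolutions $\cdots\xrightarrow{x_i}\Lambda_R(x_i)\xrightarrow{x_i}\Lambda_R(x_i)\to R$. That tensor product is a resolution over any $R$ by K\"unneth, since everything is $R$-free. It shows that in weight $w$ the homology is concentrated in word length $w$.

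In that top length, the homology is $T^w(V)$, with $V$ spanned by the $\langle a_i\rangle$, modulo the image of the words having exactly one letter $\langle a_ia_j\rangle$. The weight-one letters are cycles of even degree, so this image is the weight-$w$ part of the two-sided ideal generated by the commutators $\langle a_i|a_j\rangle-\langle a_j|a_i\rangle$. The homology is therefore $S^w(V)$, and $y_i\mapsto[\langle a_i\rangle]$ is an isomorphism of algebras.
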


\begin{proof}
Every letter $\langle a_i\rangle$ is a cycle, the letters being
primitive. For $i\neq j$
\[
d\langle a_ia_j\rangle
\;=\;\pm\big(\langle a_i|a_j\rangle-\langle a_j|a_i\rangle\big).
\]
The classes $[\langle a_i\rangle]$ therefore commute, and
$y_i\mapsto[\langle a_i\rangle]$ extends to an algebra map
$R[y_1,\dots,y_r]\to H(\widehat\Omega\Lambda)$ which sends the
monomial $y_{i_1}\cdots y_{i_s}$ to the class of the word
$\langle a_{i_1}|\cdots|a_{i_s}\rangle$; it is an isomorphism
by the Koszul duality cited above.
\end{proof}

\begin{proposition}[The Pontryagin ring of $\loops K$]\label{prop:frame}
Let $K$ be a simply connected compact Lie group with $(K,R)$
exterior, $H_*(K;R)=\Lambda_R(a_1,\dots,a_r)$ with the $a_k$
primitive of odd degree at least three. Then
\[
H_*(\loops K;R)\;\cong\;R[y_1,\dots,y_r],
\qquad
|y_k|=|a_k|-1,
\]
with $y_k$ the desuspended class $\langle a_k\rangle$.
\end{proposition}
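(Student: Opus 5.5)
The plan is to run the word-length spectral sequence of Section~\ref{sub:cobar} for $X=K$ and show that it collapses with no multiplicative extension problem. Since $(K,R)$ is exterior (Corollary~\ref{cor:Kexterior}), $H_*(K;R)=\Lambda_R(a_1,\dots,a_r)$ is free over $R$. The $E_2$-page is therefore $\mathrm{Cotor}^{\Lambda}(R,R)=H(\widehat\Omega\Lambda)$, which by Lemma~\ref{lem:cotorext} is $R[y_1,\dots,y_r]$. Here $y_k$ has word length one and total degree $|a_k|-1$, which is even.

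\textbf{Collapse.} I would argue collapse by parity. The $E_2$-page is concentrated in even total degree, since every generator $y_k$ is even. Each differential $d_s$, $s\geq2$, has total degree $-1$, so it maps even total degree to odd total degree, where the page is zero. Hence all differentials vanish and $E_2=E_\infty$. Convergence is already noted in Section~\ref{sub:cobar}: the filtration is finite in each degree. Consequently $H_*(\loops K;R)$ is, in each degree, an iterated extension of free $R$-modules, hence free, with associated graded algebra $R[y_1,\dots,y_r]$.

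\textbf{Lifting the generators.} Next I would realize the generators in $H_*(\loops K;R)$. The class $y_k$ lies in filtration one, the lowest nonzero filtration in positive degree. So it is represented by a cycle $\langle c_k\rangle$ of word length one, namely the desuspension of a chain $c_k\in C$ representing $a_k$ that is a cycle modulo the $d_1$-correction. Concretely, take $\langle c_k\rangle$ plus higher-length corrections making it a $d$-cycle, which exist because $y_k$ survives. Call its class $\tilde y_k$. This is the desuspended class of the statement.

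\textbf{Commutativity and extensions.} The $\tilde y_k$ commute in $H_*(\loops K;R)$, because $\loops K$ is homotopy commutative, being the loop space of a topological group; one could alternatively use the homotopy between pointwise product and concatenation. This gives an algebra map $R[y_1,\dots,y_r]\to H_*(\loops K;R)$, $y_k\mapsto\tilde y_k$. The spectral sequence is multiplicative, so this map respects filtrations. On associated graded objects it induces the identity of $E_\infty=R[y_1,\dots,y_r]$, since a monomial of length $s$ maps to the class of a word of length $s$ with leading term $\langle a_{i_1}|\cdots|a_{i_s}\rangle$. A filtered map of finite, exhaustive filtrations inducing an isomorphism on the associated graded is itself an isomorphism, degreewise by induction on the filtration. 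This resolves the extension problem.

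The main obstacle is exactly this multiplicative extension problem. Over a general ring $R$ one cannot split filtrations by a vector-space argument, so the proof must pass through a genuine algebra map from the polynomial ring. The delicate points are that the lifted generators commute on the nose in homology, which is why I would invoke homotopy commutativity of $\loops K$, and that the filtration argument works over $R$, which uses the freeness of each $E_\infty$ term.
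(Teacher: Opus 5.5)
Your argument is correct and follows the paper's route: $E_2\cong R[y_1,\dots,y_r]$ by Lemma~\ref{lem:cotorext}, and the spectral sequence collapses because the whole page sits in even total degree. The paper's proof stops at $E_2=E_\infty\cong H_*(\loops K;R)$. Your additional steps make explicit the multiplicative extension argument that the paper leaves implicit: lifting the $y_k$, using homotopy commutativity of $\loops K$ to get an algebra map from $R[y_1,\dots,y_r]$, and comparing associated graded objects.
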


\begin{proof}
By Lemma~\ref{lem:cotorext} the $E_2$-page is
$R[y_1,\dots,y_r]$ as an algebra, concentrated in even total degrees and free, hence $E_2=E_\infty\cong H_*(\loops K;R)$.
\end{proof}

When $H_*(\loops G;R)$ is a polynomial ring, the BV operator of
Proposition~\ref{prop:hepworth3} reduces to:

\begin{proposition}\label{prop:reduce}
In the situation of Definition~\ref{def:hepworth2}, let
$H_*(\loops G;R)$ be a polynomial ring on generators
$b_1,\dots,b_r$. Then, for any basis $p_1,\dots,p_n$ of the
odd primitives of $H_*(G;R)$ and dual classes $p^i$,
\[
\Delta\;=\;\sum_{i=1}^{n}\sum_{l=1}^{r}\partial_i(b_l)\,
\frac{\partial}{\partial b_l}\otimes\delta_i.
\]
The BV operator is thus determined by the finitely many classes
$\partial_i(b_l)\in H_*(\loops G;R)$, $1\leq i\leq n$,
$1\leq l\leq r$.
\end{proposition}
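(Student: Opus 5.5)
Since each $\partial_i$ is a derivation of $H_*(\loops G;R)$ (Proposition~\ref{prop:hepworth3}), and $H_*(\loops G;R)=R[b_1,\dots,b_r]$ is a polynomial ring, the plan is to identify $\partial_i$ with the derivation $\sum_l\partial_i(b_l)\,\partial/\partial b_l$ and then substitute this into $\Delta=\sum_i\partial_i\otimes\delta_i$.

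\textbf{Step 1: agreement on generators.} The operator
\[
D_i:=\sum_{l=1}^{r}\partial_i(b_l)\,\frac{\partial}{\partial b_l}
\]
is a derivation of the polynomial ring of the same degree $1-|p_i|$ as $\partial_i$. The ring is concentrated in even degrees, so no Koszul signs arise and every derivation is an ordinary (even) derivation. On generators we have $D_i(b_l)=\partial_i(b_l)$.

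\textbf{Step 2: uniqueness.} A derivation of $R[b_1,\dots,b_r]$ is determined by its values on the generators, by the Leibniz rule and induction on monomial length. It also kills $1$, since $\partial(1)=\partial(1\cdot1)=2\partial(1)$. Hence $\partial_i=D_i$.

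\textbf{Step 3: substitution.} Inserting $\partial_i=D_i$ into Proposition~\ref{prop:hepworth3} gives the displayed formula for $\Delta$. Finiteness of the data is then immediate: there are $n\cdot r$ classes $\partial_i(b_l)$. The homogeneous generators $b_l$ lie in even degrees, and each $\partial_i(b_l)$ lies in $H_{|b_l|-|p_i|+1}(\loops G;R)$.

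\textbf{Main point of care.} The only step needing attention is confirming that $\partial_i$ is a genuine derivation and not merely a coderivation-type operator. Proposition~\ref{prop:hepworth3} supplies this, because the coproduct is an algebra map and $\sigma$ vanishes on decomposables, being valued in primitives and killing products of positive-degree classes. The statement also mixes the basis size $n$ with the number of generators $r$, but the argument does not need $n=r$.
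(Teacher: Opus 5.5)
Your proposal is correct and follows the paper's own argument: Proposition~\ref{prop:hepworth3} gives $\Delta=\sum_i\partial_i\otimes\delta_i$ with each $\partial_i$ a derivation. A derivation of $R[b_1,\dots,b_r]$ equals $\sum_l(\text{its value on }b_l)\,\partial/\partial b_l$, and substituting this into the formula for $\Delta$ gives the statement. Your extra justification that $\partial_i$ is a derivation (the multiplicative coproduct, and $\sigma$ killing products of positive-degree classes as in Lemma~\ref{lem:sigmaprops}) is a reasonable gloss, but citing Proposition~\ref{prop:hepworth3} is all that is needed.
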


\begin{proof}
By Proposition~\ref{prop:hepworth3},
$\Delta=\sum_k\partial_k\otimes\delta_k$ with each
$\partial_k$ a derivation of the polynomial ring
$H_*(\loops G;R)$, which is
$\sum_l(\text{its value on }b_l)\,\partial/\partial b_l$.
\end{proof}

\section{The homology suspension}\label{sec:suspension}\label{sub:suspension}

In this section we record some basic properties of the homology
suspension and compute it on $\loops K$ and on $\loops G_{ss}$:
for the simply connected group $K$ it carries the polynomial
generators of $H_*(\loops K;R)$ to a basis of the odd
primitives of $H_*(K;R)$ (Lemma~\ref{lem:edge}); and for
$G_{ss}=K/\Gamma$, under
$H_*(\loops G_{ss};R)\cong R[\Gamma]\otimes H_*(\loops K;R)$
of Proposition~\ref{prop:loopsplit}, it kills $g\otimes1$ and
sends $g\otimes b$ to $p_*\sigma_K(b)$, independently of
$g\in\Gamma$ (Lemma~\ref{lem:suspcomp}).

\begin{lemma}[The suspension in degree zero]\label{lem:suspzero}
For a based space $X$, $\sigma\colon H_0(\loops X;R)\to H_1(X;R)$
sends the class of the component of $\gamma\in\pi_1(X)$ to the
Hurewicz image of $\gamma$.
\end{lemma}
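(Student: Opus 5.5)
We need to prove that for $\gamma\in\pi_1(X)$, $\sigma[\gamma]=h(\gamma)$, where $\sigma(a)=\mathrm{ev}_*([S^1]\times a)$ and $[\gamma]\in H_0(\loops X;R)$ is the class of a point $\gamma\in\loops X$ in the component corresponding to $\gamma$.

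The plan is to compute directly from the definition of $\sigma$. Represent the class of the component by the $0$-simplex at a loop $\gamma\in\loops X$, so $[\gamma]=\iota_{\gamma*}[\mathrm{pt}]$ for the inclusion $\iota_\gamma\colon\mathrm{pt}\to\loops X$. By naturality of the homology cross product,
\[
[S^1]\times[\gamma]=(\id_{S^1}\times\iota_\gamma)_*\big([S^1]\times[\mathrm{pt}]\big),
\]
and $[S^1]\times[\mathrm{pt}]$ corresponds to $[S^1]$ under $S^1\times\mathrm{pt}\cong S^1$. Hence
\[
\sigma[\gamma]=\big(\mathrm{ev}\circ(\id_{S^1}\times\iota_\gamma)\big)_*[S^1].
\]

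The composite $S^1\to X$ is $t\mapsto\mathrm{ev}(t,\gamma)=\gamma(t)$, which is the loop $\gamma$ itself regarded as a map $S^1=\mathbb{R}/\Z\to X$. The Hurewicz homomorphism sends the class of a based map $f\colon S^1\to X$ to $f_*[S^1]$, with $[S^1]$ the fundamental class matching the chosen orientation of $\mathbb{R}/\Z$. So $\sigma[\gamma]=h(\gamma)$.

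Two small points need checking. First, the answer is independent of the representative: a path of loops from $\gamma$ to $\gamma'$ within one component gives a homotopy of maps $S^1\to X$, so both the class in $H_0$ and the image in $H_1$ depend only on the component. Second, the orientation convention for $[S^1]$ used in defining $\sigma$ must agree with the one used for the Hurewicz map. I would fix the same generator for both, which is the only place a sign could arise.

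There is no real obstacle here; the main care needed is the bookkeeping of conventions, namely the orientation of $S^1$ and the order of factors in $S^1\times\loops X$. In degree zero the Koszul sign of the cross product is trivial, so the order of factors causes no sign.
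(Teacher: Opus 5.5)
Your proposal is correct and follows the same route as the paper: you restrict $\mathrm{ev}$ to $S^1\times\{\gamma\}$, observe that the result is the loop $\gamma$ itself, and conclude $\sigma[\gamma]=\gamma_*[S^1]$. Your extra bookkeeping (naturality of the cross product, independence of the representative, matching the orientation of $[S^1]$ with the one in the Hurewicz map) just spells out steps the paper's one-line proof leaves implicit.
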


\begin{proof}
The evaluation map restricted to $S^1\times\{\gamma\}$ is
$\gamma$, so
$\sigma[\gamma]=\mathrm{ev}_*([S^1]\times[\gamma])=\gamma_*[S^1]$.
\end{proof}

\noindent For the torus the Hurewicz map
$\Z^k=\pi_1(T^k)\to H_1(T^k;\Z)$ is an isomorphism, so
\[
\sigma\colon H_0(\loops T^k;R)=R[\Z^k]\longrightarrow
H_1(T^k;R)=R\otimes\Z^k,
\qquad
[a]\longmapsto 1\otimes a,
\]
for $[a]$ the class of the component of $a\in\Z^k$.

\begin{lemma}[The suspension of the generators]\label{lem:edge}
Let $K$ be as in Proposition~\ref{prop:frame}. Then
$\sigma(y_k)=a_k$.
\end{lemma}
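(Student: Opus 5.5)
The plan is to identify the homology suspension with the edge map of the word-length spectral sequence of Section~\ref{sub:cobar}. The suspension $\sigma\colon H_*(\loops K;R)\to H_{*+1}(K;R)$ is induced by $\mathrm{ev}\colon S^1\times\loops K\to K$. At the chain level it is modelled on Adams' cobar construction by the map
\[
\widehat\Omega C\to sC,\qquad \langle c\rangle\mapsto sc,\qquad \langle c_1|\cdots|c_s\rangle\mapsto0\ \ (s\geq2).
\]
This map is a chain map up to sign: $d_1$ raises word length, so it lands in the kernel, and $d_0\langle c\rangle=-\langle\partial c\rangle$ matches the differential of $sC$. I would take this compatibility from Adams' construction: his equivalence $\widehat\Omega C\simeq C_*(\loops X)$ is built from maps $\Delta^{n-1}\times I\to$ paths, i.e.\ from the adjoint of an $n$-simplex. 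Evaluating the path parameter recovers the original simplex, so under $\mathrm{ev}$ the fundamental class of $S^1$ times the word-length-one part is sent to the simplex itself. The word-length-$\geq2$ parts map to products of simplices collapsed through basepoint vertices and edges, hence to degenerate or decomposable chains that vanish in homology.

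Granting this, the key steps are as follows. First, $\sigma$ factors through the quotient $H(\widehat\Omega C)\to E_\infty^{1,*}\subset E_2^{1,*}$, the classes of word length exactly one. Second, $E_2^{1,*}=\mathrm{Cotor}^{1}$ of $H_*(K;R)$ is the module of primitives of $H_*(K;R)$. Third, the edge map $E_2^{1,*}\to H_*(K;R)$, $[\langle a\rangle]\mapsto a$, is the inclusion of the primitives. By Proposition~\ref{prop:frame}, $y_k$ is represented in filtration one by $\langle \tilde a_k\rangle$ plus longer words, for a cycle $\tilde a_k\in C$ representing $a_k$. Applying the chain map then gives $\sigma(y_k)=\pm a_k$.

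The main obstacle is the sign, together with the rigorous chain-level compatibility of $\mathrm{ev}_*([S^1]\times-)$ with the projection to word length one. For the compatibility I would argue by naturality, via the map of path fibrations and the suspension $\Sigma\loops K\to K$. Adams' map is natural in $X$, so it suffices to check the statement for a sphere $S^{n}$, $n\ge 3$, mapping into $K$, since rationally and indeed integrally the generators $a_k$ are not spherical in general. Alternatively, one can use that $\sigma$ kills decomposables, hence factors through $QH_*(\loops K)$, and that the word-length-one projection is the transgression in the Serre/Eilenberg--Moore sense; the transgression is a classical identification with the edge homomorphism.

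For the sign, I would fix the sign convention of the desuspension $s^{-1}$ so that $\sigma\langle c\rangle=c$. If the computation produces $-a_k$ instead, I would redefine $y_k$ by $-y_k$. This does not affect Proposition~\ref{prop:frame}, since the generators are only specified up to the class $\langle a_k\rangle$ and its sign is conventional. Finally, the decomposable part of $y_k$ in higher word length contributes nothing, because $\sigma$ vanishes on products: the suspension annihilates decomposables in a Pontryagin ring of an $H$-space. This confirms that $\sigma(y_k)=a_k$.
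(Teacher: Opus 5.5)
Your proposal follows the paper's route: the paper's proof identifies the homology suspension with the edge map of the word-length spectral sequence (citing Neisendorfer, Proposition~10.25.4) and applies it to the length-one representative $\langle a_k\rangle$ of $y_k$, so the chain-level compatibility you single out as the main obstacle is exactly this cited fact. The detours can be dropped. The reduction to spheres mapping into $K$ cannot work, since integrally the $a_k$ need not be spherical (rationally they are, contrary to your aside). The longer chain-level words in the representative of $y_k$ are already killed by the projection to word length one, so you do not need $\sigma$ annihilating decomposables.
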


\begin{proof}
In the spectral sequence of Section~\ref{sub:frame} a class of
filtration one is represented by a word of length one, its
symbol is a desuspended primitive of $H_*(K;R)$, and the edge
map is the homology suspension
\cite[Proposition~10.25.4]{Neisendorfer}.
\end{proof}

\begin{lemma}[The suspension on the components]\label{lem:suspcomp}
Let $G_{ss}=K/\Gamma$ with $|\Gamma|$ invertible in $R$, and
let $r_g=\Phi(-,g)\colon\loops K\to\loops G_{ss}$,
$\omega\mapsto p\circ(\omega\cdot c_g)$
(Section~\ref{sub:loopspace}). Then
$\sigma_{G_{ss}}((r_g)_*1)=0$, and for
$b\in H_*(\loops K;R)$ of positive degree
\[
\sigma_{G_{ss}}\big((r_g)_*b\big)\;=\;p_*\,\sigma_K(b),
\]
independent of $g$.
\end{lemma}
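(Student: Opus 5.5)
The plan is to compute $\sigma_{G_{ss}}\circ(r_g)_*$ directly from the definition $\sigma(a)=\mathrm{ev}_*([S^1]\times a)$, by comparing the evaluation map for $G_{ss}$ on the component $P_gK$ with the evaluation map for $K$ on $\loops K$. Parametrize $S^1=[0,1]/\{0,1\}$. For $\omega\in\loops K$ and $t\in[0,1]$ we have
\[
\mathrm{ev}_{G_{ss}}\big(t,r_g(\omega)\big)=p\big(\omega(t)\,c_g(t)\big).
\]
The map $(t,\omega)\mapsto\omega(t)c_g(t)$ is not defined on $S^1\times\loops K$ with values in $K$, since $c_g(1)=g\neq1$ in general. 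After applying $p$ it is well defined, because $p(g)=1$. So I would write $\mathrm{ev}_{G_{ss}}\circ(\id\times r_g)=\mu\circ(p\circ\mathrm{ev}_K,\ \bar c_g\circ\mathrm{pr}_1)$. Here $\mu$ is the multiplication of $G_{ss}$ and $\bar c_g=p\circ c_g\colon S^1\to G_{ss}$ is a loop representing $g\in\pi_1(G_{ss})$. Both factors are well-defined maps $S^1\times\loops K\to G_{ss}$.

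With this factorization, I would compute on homology using the Pontryagin product. Writing $\Delta_*([S^1]\times b)$ via the diagonal of $S^1\times\loops K$, the pieces are $[S^1]\times b$, $[\mathrm{pt}]\times b'$ and so on. The cross product $[S^1]\times b$ diagonalizes as
\[
\big([S^1]\otimes[\mathrm{pt}]+[\mathrm{pt}]\otimes[S^1]\big)\times\psi(b),
\]
up to signs. Pushing forward gives
\[
\sigma_{G_{ss}}\big((r_g)_*b\big)=\sum p_*\mathrm{ev}_{K*}([S^1]\times b_{(1)})\cdot\bar c_{g*}[\mathrm{pt}]\,\varepsilon(b_{(2)})+p_*\mathrm{ev}_{K*}([\mathrm{pt}]\times b_{(1)})\cdot\bar c_{g*}[S^1]\,\varepsilon(b_{(2)}).
\]
Here $\bar c_g\circ\mathrm{pr}_1$ kills positive-degree classes of $\loops K$, so only the counit of $b_{(2)}$ survives. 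The first term is $p_*\sigma_K(b)\cdot 1=p_*\sigma_K(b)$. The second term is $p_*(\mathrm{ev}_{K*}([\mathrm{pt}]\times b))\cdot\bar c_{g*}[S^1]$. Evaluation at the basepoint of $\loops K$ is constant, so this vanishes when $|b|>0$. This gives the formula, independent of $g$.

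For $b=1$, the class $(r_g)_*1$ is the component of $g$. By Lemma~\ref{lem:suspzero}, its suspension is the Hurewicz image of $g$ in $H_1(G_{ss};R)$, which is $\Gamma\otimes R$. This vanishes because $|\Gamma|$ is invertible in $R$: $\Gamma$ is finite, so each element has order dividing $|\Gamma|$, and $g\otimes1=g\otimes|\Gamma|\cdot|\Gamma|^{-1}=g^{|\Gamma|}\otimes|\Gamma|^{-1}=0$.

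The main obstacle I expect is the factorization step. I have to check carefully that $(t,\omega)\mapsto p(\omega(t))\,p(c_g(t))$ agrees with $\mathrm{ev}_{G_{ss}}\circ(\id\times r_g)$ as a map from $S^1\times\loops K$. This holds pointwise since $p$ is a homomorphism, and continuity at $t=0\sim1$ uses $\omega(1)=1$ and $p(g)=1$. I also have to track the Künneth and diagonal signs, which are harmless because only one term survives. An alternative route avoids this: for $|b|>0$, the loop $\bar c_g$ only contributes through $H_0$ and $H_1$ of $S^1$, and the $H_1$-part pairs with $[\mathrm{pt}]$ on the $\loops K$ side.
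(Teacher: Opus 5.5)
Your proposal is correct and follows the paper's proof. It uses the same factorization of $\mathrm{ev}\circ(\mathrm{id}\times r_g)$ as $\mu\circ\big((p\circ\mathrm{ev}_K)\times\gamma_g\big)$, the same splitting of $[S^1]$ under the diagonal, where the $[\mathrm{pt}]\times b$ term dies because $\mathrm{ev}_K(0,\omega)=1$ and $|b|>0$, and the same Hurewicz argument for $b=1$. The one difference is minor: you diagonalize all of $S^1\times\loops K$ and then collapse $\psi(b)$ with the counit, whereas the paper duplicates only the circle coordinate via $D(t,\omega)=((t,\omega),t)$, which avoids the coproduct of $b$ altogether.
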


\begin{proof}
For $b=1$: $(r_g)_*1$ is the class of the component of $g$,
and its suspension is the Hurewicz image of $g$ in
$H_1(G_{ss};R)=\Gamma\otimes R$ (Lemma~\ref{lem:suspzero}),
which is zero since $|\Gamma|$ is invertible in $R$.

Let $|b|>0$. Since $p$ is a homomorphism with
$p(c_g(0))=p(c_g(1))=1$, the map $\gamma_g:=p\circ c_g$ is a
loop in $G_{ss}$ at $1$, and
\[
\mathrm{ev}\big(t,r_g(\omega)\big)=p(\omega(t))\,\gamma_g(t).
\]
So $\mathrm{ev}\circ(\mathrm{id}\times r_g)$ is the composite
\[
S^1\times\loops K\xrightarrow{\ D\ }
(S^1\times\loops K)\times S^1
\xrightarrow{\ (p\circ\mathrm{ev}_K)\times\gamma_g\ }
G_{ss}\times G_{ss}\xrightarrow{\ \mu\ }G_{ss},
\qquad D(t,\omega)=((t,\omega),t),
\]
with $\mathrm{ev}_K$ the evaluation of $K$ and $\mu$ the
multiplication. The diagonal of $S^1$ sends $[S^1]$ to
$[S^1]\times[\mathrm{pt}]+[\mathrm{pt}]\times[S^1]$, so
\[
D_*\big([S^1]\times b\big)
=\big([S^1]\times b\big)\times[\mathrm{pt}]
+(-1)^{|b|}\big([\mathrm{pt}]\times b\big)\times[S^1].
\]
The second term dies under $\mathrm{ev}_K$: $\mathrm{ev}_K(0,\omega)=\omega(0)=1$
is constant and $|b|>0$. The first is carried to
$p_*\sigma_K(b)\times[1]$, since
$\mathrm{ev}_{K*}([S^1]\times b)=\sigma_K(b)$ and
$\gamma_{g*}[\mathrm{pt}]=[1]$, and then to
$p_*\sigma_K(b)$ by $\mu$, as $[1]$ is the unit. Hence
$\sigma_{G_{ss}}((r_g)_*b)=\mathrm{ev}_*([S^1]\times(r_g)_*b)
=p_*\sigma_K(b)$.
\end{proof}

Lemma~\ref{lem:sigmaprops} uses a second description of
$\sigma$. In the path fibration
$\loops X\to PX\xrightarrow{\ q\ }X$, $q(\gamma)=\gamma(1)$,
the space $PX$ is contractible, so the boundary
$\partial\colon H_{n+1}(PX,\loops X)\to H_n(\loops X)$ is an
isomorphism for $n\geq1$. For $|e|\geq1$,
\[
\sigma(e)\;=\;q_*(u)\qquad\text{for the unique $u$ with }\partial u=e,
\]
$q$ read as a map of pairs $(PX,\loops X)\to(X,*)$. Indeed the
map of pairs
$c\colon([0,1]\times\loops X,\{0,1\}\times\loops X)\to(PX,\loops X)$,
$c(t,\gamma)(s)=\gamma(st)$, satisfies $q\circ c=\mathrm{ev}$
and $\partial\,c_*([0,1]\times e)=e-\varepsilon(e)[\mathrm{pt}]=e$,
so $u=c_*([0,1]\times e)$ and
$q_*(u)=\mathrm{ev}_*([S^1]\times e)=\sigma(e)$.

\begin{lemma}[The homology suspension kills decomposables]
\label{lem:sigmaprops}
Let $X$ be a based space and $e,e'\in H_*(\loops X;R)$ of
positive degree. Then $\sigma(ee')=0$.
\end{lemma}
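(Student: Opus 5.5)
We use the description of $\sigma$ through the path fibration recorded just above the statement: for $|e|\geq1$, $\sigma(e)=q_*(u)$ where $u\in H_{*+1}(PX,\loops X)$ is the unique class with $\partial u=e$. The plan is to exhibit a lift $u$ of $ee'$ whose image under $q_*$ is visibly zero. The natural candidate comes from a relative Pontryagin product. The space $PX$ of paths starting at the basepoint is a right module over $\loops X$ by concatenation, $PX\times\loops X\to PX$, $(\gamma,\omega)\mapsto\gamma*\omega$; this action restricts to the Pontryagin multiplication on $\loops X\times\loops X$.

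\textbf{Step 1 (the relative lift).} Take $u$ with $\partial u=e$. The action gives a map of pairs
\[
m\colon (PX,\loops X)\times\loops X\to(PX,\loops X),
\]
and we set $u\cdot e':=m_*(u\times e')\in H_{|ee'|+1}(PX,\loops X)$. Naturality of the boundary with respect to cross products, $\partial(u\times e')=\partial u\times e'$, together with the fact that $m$ restricts to the Pontryagin product, gives $\partial(u\cdot e')=ee'$. Since $|ee'|\geq1$ and $\partial$ is an isomorphism in that range, $u\cdot e'$ is \emph{the} lift, so $\sigma(ee')=q_*(u\cdot e')$.

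\textbf{Step 2 (computing $q_*$).} We have $q(\gamma*\omega)=\omega(1)=*$, because $\omega$ is a loop. Hence $q\circ m$ factors through the projection $(PX,\loops X)\times\loops X\to(PX,\loops X)$ followed by the constant map to $*$; equivalently, $q\circ m$ sends the whole space into the basepoint. In relative homology $H_*(X,*)$ the class $q_*m_*(u\times e')$ is therefore the image of a class under a map landing in $(*,*)$. Its degree is at least $2$, so it vanishes. Alternatively, the class factors as $\varepsilon$-type pushforward through $H_*(\mathrm{pt}\times\loops X\to\mathrm{pt})$, which kills $e'$ of positive degree. Either way $\sigma(ee')=0$.

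\textbf{Main obstacle.} The delicate point is Step 1: the pair-level formula $\partial(u\times e')=(\partial u)\times e'$ must be applied with the correct sign and pair structure. The relevant identity is $(A,B)\times C=(A\times C,B\times C)$, and the sign is $+1$ since $\partial$ is applied to the left factor. We must also make sure that concatenation $PX\times\loops X\to PX$ is continuous with the chosen parametrization; Moore paths can be used if needed. Signs do not affect the conclusion, since the final class is zero regardless of sign.
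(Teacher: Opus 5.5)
Your Step~2 contains a genuine error, and it lies in the action itself rather than in the sign bookkeeping you flagged as the main obstacle. With $PX$ the paths starting at the basepoint and $q(\gamma)=\gamma(1)$, the concatenation $\gamma*\omega$ (first $\gamma$, then $\omega$) is defined only when $\gamma(1)=*$, so $m$ is not a map on $PX\times\loops X$. Reading it the other way round (first $\omega$, then $\gamma$) gives a well-defined map, but then $q(\gamma*\omega)=\gamma(1)$, not $\omega(1)$.

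In fact no choice can make Steps~1 and~2 hold together. If $q\circ m$ were constant at $*$, then $m$ would take values in $q^{-1}(*)=\loops X$. The map of pairs would then factor through $(\loops X,\loops X)$, and $m_*(u\times e')$ would be zero in $H_*(PX,\loops X)$. That contradicts your identity $\partial(u\cdot e')=ee'$ whenever $ee'\neq0$. The action of the fibre on the path fibration preserves the endpoint; it does not collapse it.

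The repair is the paper's argument. Prepend the loop instead: $a\colon\loops X\times PX\to PX$, $a(\omega,\gamma)=\omega*\gamma$. This is a map of pairs $\loops X\times(PX,\loops X)\to(PX,\loops X)$ restricting to the Pontryagin product. The relative factor now sits on the right, so you lift $e'$ instead of $e$. For $\partial u=e'$ one has $\partial\,a_*(e\times u)=(-1)^{|e|}ee'$, hence $\sigma(ee')=(-1)^{|e|}q_*a_*(e\times u)$. Now $q\circ a=q\circ\mathrm{pr}_2$, so
\[
q_*a_*(e\times u)=q_*\mathrm{pr}_{2*}(e\times u)=\varepsilon(e)\,q_*u=0,
\]
because $|e|>0$.

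What kills the class is the augmentation of the positive-degree absolute factor under the projection, not a collapse of the whole map to the basepoint. Your ``alternatively'' sentence points toward this, but it is attached to the false factorization.

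If you prefer to keep the relative class on the left, use paths ending at the basepoint, with $q(\gamma)=\gamma(0)$. Then $\gamma*\omega$ is defined, $q\circ m=q\circ\mathrm{pr}_1$, and $q_*m_*(u\times e')=\varepsilon(e')\,q_*u=0$. You would still need to re-check that $\sigma$ is $q_*$ of the lift, up to sign, in that model.
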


\begin{proof}
Let $a\colon\loops X\times PX\to PX$, $a(\omega,\gamma)=\omega*\gamma$,
concatenate a loop in front of a path. It carries
$\loops X\times\loops X$ into $\loops X$, where it is the
concatenation inducing the Pontryagin product, and it
preserves endpoints: $q\circ a=q\circ\mathrm{pr}_2$ for the
projection $\mathrm{pr}_2\colon\loops X\times PX\to PX$. So
$a$ is a map of pairs
$(\loops X\times PX,\loops X\times\loops X)\to(PX,\loops X)$.

Let $u$ be the class with $\partial u=e'$. The cross product
$e\times u\in H_*(\loops X\times PX,\loops X\times\loops X)$
has $\partial(e\times u)=(-1)^{|e|}e\times e'$, $e$ being an
absolute class, so
$\partial\,a_*(e\times u)=(-1)^{|e|}a_*(e\times e')=(-1)^{|e|}ee'$.
By the description of $\sigma$ above,
$\sigma(ee')=(-1)^{|e|}\,q_*a_*(e\times u)$.

Now $q_*a_*=q_*\mathrm{pr}_{2*}$, and $\mathrm{pr}_2$ collapses
the first factor to a point, so
$\mathrm{pr}_{2*}(e\times u)=\varepsilon(e)\,u$. Since $|e|>0$,
$\varepsilon(e)=0$, and hence $\sigma(ee')=0$.
\end{proof}

\begin{lemma}[Distinct exponents]\label{lem:indec}
Let $(K,R)$ be an exterior pair with $K$ simply connected,
$H_*(K;R)=\Lambda_R(a_1,\dots,a_r)$, and the exponents of $K$
pairwise distinct. Then for any polynomial generators
$b_1,\dots,b_r$ of $H_*(\loops K;R)$, after reindexing,
$\sigma(b_k)=u_k\,a_k$ with $u_k\in R^\times$.
\end{lemma}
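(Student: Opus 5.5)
We first show that $\sigma$ carries the indecomposables of $H_*(\loops K;R)$ isomorphically onto the span of the odd primitives, and then use the grading to pair up the generators.

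Write $A=H_*(\loops K;R)=R[y_1,\dots,y_r]$ with $\sigma(y_k)=a_k$, as given by Proposition~\ref{prop:frame} and Lemma~\ref{lem:edge}. Let $I\subset A$ be the augmentation ideal and $Q=I/I^2$ the module of indecomposables. Since $A$ is polynomial on the $y_k$, $Q$ is free over $R$ on the classes $\bar y_k$. By Lemma~\ref{lem:sigmaprops}, $\sigma$ vanishes on $I^2$, because an element of $I^2$ is a sum of products of positive-degree classes. So $\sigma$ factors through a map $\bar\sigma\colon Q\to H_{*+1}(K;R)$ with $\bar\sigma(\bar y_k)=a_k$. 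Hence $\bar\sigma$ is an isomorphism of $Q$ onto the free submodule $P=Ra_1\oplus\cdots\oplus Ra_r$.

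Now let $b_1,\dots,b_r$ be any polynomial generators of $A$, which we may take homogeneous. Then $A=R[b_1,\dots,b_r]$, so the classes $\bar b_l$ also form an $R$-basis of $Q$. We decompose by degree. Because the exponents are pairwise distinct, each degree $2m_k$ of $Q$ is a free module of rank one, spanned by $\bar y_k$. Since the $\bar b_l$ form a homogeneous basis, each such degree contains exactly one of them. After reindexing, $\bar b_k$ lies in degree $2m_k$, and being a basis element of a rank-one free module, it satisfies $\bar b_k=u_k\bar y_k$ with $u_k\in R^\times$.

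Applying $\bar\sigma$ then gives $\sigma(b_k)=u_k a_k$, as claimed.

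The one point needing care is the claim that the generators $b_l$ induce a basis of $Q$. This holds because $A\cong R[b_1,\dots,b_r]$ gives $I/I^2\cong\bigoplus_l R\bar b_l$ directly. A second, related point is that a homogeneous basis of a graded free module, each of whose graded pieces has rank at most one, must meet each nonzero degree in exactly one element. This follows by comparing ranks degree by degree, which is legitimate because over a nonzero commutative ring the rank of a free module is well defined.
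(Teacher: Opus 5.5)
Your proof is correct and follows the paper's argument. The paper uses the distinct degrees to write $b_k=u_ky_k+{}$decomposable with $u_k$ a unit, then kills the decomposables with Lemma~\ref{lem:sigmaprops}; your passage through $Q=I/I^2$ is a more careful version of that step, including why $u_k\in R^\times$. One small point: homogeneity of the $b_k$ is built into ``polynomial generators'' in the graded setting, rather than something you are free to choose.
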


\begin{proof}
By Proposition~\ref{prop:frame} and Lemma~\ref{lem:edge},
$H_*(\loops K;R)=R[y_1,\dots,y_r]$ with $\sigma(y_k)=a_k$. The
degrees are distinct, so after reindexing $|b_k|=|y_k|$ and
$b_k=u_ky_k+{}$decomposable, with $u_k$ a unit since the $b_k$
generate. The suspension $\sigma$ kills decomposables (Lemma~\ref{lem:sigmaprops}),
so $\sigma(b_k)=u_ka_k$.
\end{proof}

\noindent Lemma~\ref{lem:indec} is used for $\Sp(n)$ in
Section~\ref{sec:hopf}. In fact the image of $\sigma$ lies in
the primitives of $H_*(X;R)$ for every based space $X$
\cite[Lemma~8]{Hepworth}.

\section{The BV algebra of \texorpdfstring{$L\Sp(n)$}{LSp(n)}}
\label{sec:hopf}

In this section we compute the BV algebra of $L\Sp(n)$ over
$\Z$ (Theorem~\ref{thmC}). We write out Hepworth's operators of
Definition~\ref{def:hepworth2} in the polynomial generators of
Kono and Kozima (Theorem~\ref{thm:kk}), with the homology
suspension supplied by Section~\ref{sec:suspension}.
Remark~\ref{rem:kupers} compares the result with Kupers'
computation \cite{Kupers}.

The Pontryagin ring $H_*(\loops\Sp(n);\Z)$, the loop factor of
Theorem~\ref{thm:hepworth1}, we take from Kono and Kozima.

\begin{theorem}[{\cite[Theorem~4.3]{KonoKozima}}]\label{thm:kk}
The Pontryagin ring
$H_*(\loops\Sp(n);\Z)\cong\Z[z_1,z_3,\dots,z_{2n-1}]$ is a
polynomial ring on generators $z_{2k-1}$ of degree $4k-2$, with
coproduct
\[
\psi(z_r)\;=\;z_r\otimes1+1\otimes z_r
+2\!\!\sum_{\substack{i+j=r\\ i,j\geq1}}\!\!z_i\otimes z_j
\qquad(r\ \text{odd}),
\]
where the classes of even index are the universal integral
polynomials in the odd generators given by the recursion
\[
z_{2m}\;=\;2\sum_{i=1}^{m-1}(-1)^{i+1}z_i\,z_{2m-i}
+(-1)^{m+1}z_m^{2},
\]
with the convention $z_{2m}=0$ for $m\leq0$.
\end{theorem}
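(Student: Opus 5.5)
Since the statement is quoted from Kono and Kozima, the plan is to reconstruct it by comparing with $\SU(2n)$, where everything is explicit. Let $c\colon\Sp(n)\to\SU(2n)$ be the standard inclusion. Then
\[
H_*(\loops\SU(2n);\Z)\cong\Z[e_1,\dots,e_{2n-1}],\qquad \psi(e_r)=\sum_{i+j=r}e_i\otimes e_j,\qquad e_0=1,
\]
the $e_i$ being the images of the cells of Bott's generating complex $\mathbb{C}P^{2n-1}\to\loops\SU(2n)$. Over $\Q$ both loop spaces are products of Eilenberg--MacLane spaces on even classes, and $\pi_*(\loops\Sp(n))\otimes\Q\to\pi_*(\loops\SU(2n))\otimes\Q$ is injective. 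Since $H_*(\loops\Sp(n);\Z)$ is torsion-free and polynomial (Proposition~\ref{prop:frame}), it follows that $(\loops c)_*$ is an injective map of Hopf algebras. The whole computation therefore happens inside $\Z[e_1,\dots,e_{2n-1}]\otimes\Q$, once the image of $(\loops c)_*$ is identified.

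\emph{Step 1: the rational image.} The image of $(\loops c)_*$ is the fixed part of the involution induced by the symplectic structure, i.e.\ by quaternionic conjugation on $\SU(2n)$, which fixes $\Sp(n)$. On the generating complex this involution acts by $e(t)\mapsto e(-t)^{-1}$, where $e(t)=\sum_i e_it^i$. Rationally, the image is then the sub-Hopf algebra cut out by $e(t)\,e(-t)=1$. Expanding the coefficient of $t^{2m}$ gives
\[
2e_{2m}+2\sum_{i=1}^{m-1}(-1)^ie_ie_{2m-i}+(-1)^me_m^2=0 ,
\]
so the odd classes $e_{2k-1}$ generate this image rationally and the even ones are determined by them.

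\emph{Step 2: normalization.} Next I would put $e_i=2z_i$ for $i\geq1$. Substituting into the identity of Step~1 and dividing by $4$ gives exactly the stated recursion for $z_{2m}$. Substituting into $\psi(e_r)=\sum e_i\otimes e_j$ and dividing by $2$ gives exactly the stated coproduct, with the factor $2$ on the mixed terms. So both formulas follow formally once the $z_r$ are realized as integral classes of $H_*(\loops\Sp(n);\Z)$ whose images are $\tfrac12 e_r$, or the image of such classes under a correction that is the identity modulo decomposables. The recursion also shows at once that the $z_{2m}$ are integral polynomials in the odd $z$'s.

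\emph{Step 3: integrality, the main obstacle.} It remains to show that the odd $z_{2k-1}$ are integral and generate $H_*(\loops\Sp(n);\Z)$ polynomially, and not merely rationally. The naive identification fails in the bottom degree: $\pi_3\Sp(n)\to\pi_3\SU(2n)$ is an isomorphism, so the bottom generator is not literally $e_1/2$. I expect the fix is to compose with a suitable conjugation, or to use the generating complex of $\loops\Sp(n)$ itself, a quaternionic quasi-projective space whose cells are in degrees $4k-2$, together with its own attaching data. First I would try to compute the map of cell structures to $\mathbb{C}P^{2n-1}$ in each degree. Then I would compare indices of lattices degree by degree using Lemma~\ref{lem:indec}: the suspension $\sigma$ carries the $z_{2k-1}$ to a basis of the primitives $a_k$, and this pins down each generator up to sign and decomposables. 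That lattice comparison is where I expect the real work, and possibly a reliance on Kono--Kozima's explicit cellular computation, to lie.
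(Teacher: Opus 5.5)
The paper gives no proof of this statement; it is quoted from Kono--Kozima. Your reconstruction therefore has to stand on its own, and it does not: Step~1 is wrong, and Step~3, which carries the integral content, is left open.

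\textbf{Step 1 misidentifies the image of $(\loops c)_*$.} Write $\log e(t)=\sum_k s_kt^k/k$. Then $\tau_*\colon e(t)\mapsto e(-t)^{-1}$ gives $\tau_*s_k=(-1)^{k+1}s_k$. The rational image is $\Q[s_1,s_3,\dots,s_{2n-1}]$, which is strictly smaller than the fixed subalgebra, since the latter contains $s_2^2$.

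More seriously, $e(t)$ is not $\tau_*$-fixed. So the $\SU(2n)$-classes do not satisfy $e(t)e(-t)=1$, and $e_{2k-1}$ is not in the image for $k\geq2$; for instance $\tau_*e_3=e_3-2e_1e_2+e_1^3$. Your identity holds only in the quotient of $H_*(\loops\SU(2n);\Q)$ by the coefficients of $e(t)e(-t)-1$. That quotient is abstractly isomorphic to the rational image, but its classes $\bar e_i$ carry no integral information about $\Sp(n)$.

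The group-like series that actually lies in the image is $E(t)=e(t)\,\tau_*e(t)=e(t)\,e(-t)^{-1}$. It is the image of the classes $w(t)$ of the map $\mathbb{C}P^{2n-1}\to\loops\Sp(n)$ sending a line $L$ to the loop acting by $e^{2\pi it}$ on $L$, by $e^{-2\pi it}$ on $jL$, and trivially on $(L+jL)^\perp$. This, not a quasi-projective space, is the generating complex you want. The theorem's formulas are exactly those of $w(t)=1+2z(t)$, so $z_1\mapsto e_1$ and $z_3\mapsto e_3-e_1e_2+e_1^3$, not $\tfrac12e_1$ and $\tfrac12e_3$. The bottom-degree discrepancy you noticed is therefore not local: it shows up in every degree, because Step~1 uses the wrong series.

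\textbf{The integral content is never established.} Even after this correction, Steps~1 and~2 are purely rational, and rationally the two formulas say nothing. Take primitive generators $f_{2k-1}$ of $H_*(\loops\Sp(n);\Q)$ and any nonzero $\lambda_k\in\Q$; the classes defined by $1+2z(t)=\exp\bigl(\sum_k\lambda_kf_{2k-1}t^{2k-1}\bigr)$ satisfy both formulas. What the theorem asserts is integral. For example, in $H_6$ no complement $y$ of $z_1^3$ can be primitive: $\bar\psi(y)=\lambda(z_1\otimes z_1^2+z_1^2\otimes z_1)$ with $\lambda\equiv\pm2\pmod 3$. Step~3 leaves exactly this open. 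Lemma~\ref{lem:indec} cannot close it, because it fixes $z_{2k-1}$ only up to a unit and decomposables, while $z_3$ and $z_3+z_1^3$ have mixed coefficients $2$ and $5$.

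What is missing is a proof that the $w_i$ are divisible by $2$ in $H_*(\loops\Sp(n);\Z)$ and that the halves of odd index generate. One way to supply it:
\begin{enumerate}
\item $c_*$ is an isomorphism on primitives in each degree $4k-1$; dually, the even Chern classes restrict to the symplectic Pontryagin classes up to sign. With naturality of $\sigma$, this shows $(\loops c)_*$ is an isomorphism on indecomposables in degree $4k-2$.
\item Since the target is polynomial on $e_1,\dots,e_{2n-1}$, step (1) makes $(\loops c)_*$ injective mod~$2$, so its cokernel has no $2$-torsion.
\item Since $E(t)\equiv1$ mod~$2$, this gives $w_i=2z_i$ with $z_i$ integral.
\item Comparing indecomposables, $w_{2k-1}$ is $\pm2$ times a generator modulo decomposables, so the $z_{2k-1}$ are polynomial generators.
\item Finally, $w(t)$ is group-like and $w(t)w(-t)=1$ by injectivity. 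Your Step~2 algebra, applied to $w$ in place of $e$, then yields the coproduct and the recursion.
\end{enumerate}
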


\noindent From the fibration $\Sp(n{-}1)\to\Sp(n)\to S^{4n-1}$
and the associated Serre spectral sequence, $H_*(\Sp(n);\Z)$ is free with
Poincar\'e polynomial $\prod_{k=1}^{n}(1+t^{4k-1})$; so by
Theorem~\ref{thm:whichpairs} $(\Sp(n),\Z)$ is an exterior pair,
with one odd primitive generator in each degree $4k-1$,
$1\leq k\leq n$. The exponents of $\Sp(n)$, being
$\{1,3,\dots,2n-1\}$, are pairwise distinct. We now write
these primitive generators of $H_*(\Sp(n);\Z)$ as suspensions
of the polynomial generators of Theorem~\ref{thm:kk}. Since the
exponents are distinct, Lemma~\ref{lem:indec} applies to the
$z_{2k-1}$; and every term of the recursion of
Theorem~\ref{thm:kk} is a product of two classes of positive
degree, so $z_{2j}$ is decomposable and $\sigma(z_{2j})=0$ by
Lemma~\ref{lem:sigmaprops}.

\begin{proposition}[The homology suspension]\label{prop:susp}
The classes $\sigma(z_1),\sigma(z_3),\dots,\sigma(z_{2n-1})$
are a basis of the odd primitives of $H_*(\Sp(n);\Z)$, and
$\sigma(z_{2j})=0$.
\end{proposition}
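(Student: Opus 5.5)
The plan is to apply Lemma~\ref{lem:indec} to $K=\Sp(n)$, $R=\Z$, with $b_k=z_{2k-1}$, and to handle the even-index classes with Lemma~\ref{lem:sigmaprops}.

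First, check the hypotheses of Lemma~\ref{lem:indec}. The group $\Sp(n)$ is simply connected. By the Poincar\'e polynomial computation above and Theorem~\ref{thm:whichpairs}, $(\Sp(n),\Z)$ is an exterior pair, $H_*(\Sp(n);\Z)=\Lambda_\Z(a_1,\dots,a_n)$ with $|a_k|=4k-1$. The exponents $1,3,\dots,2n-1$ are pairwise distinct. By Theorem~\ref{thm:kk}, the $z_{2k-1}$, $1\leq k\leq n$, are polynomial generators of $H_*(\loops\Sp(n);\Z)$.

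Lemma~\ref{lem:indec} then gives, after reindexing by degree, $\sigma(z_{2k-1})=u_k a_k$ with $u_k\in\Z^\times=\{\pm1\}$. The reindexing is forced, because $|z_{2k-1}|=4k-2=|a_k|-1$ and the degrees are distinct.

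Next, identify the odd primitives. In an exterior Hopf algebra on primitive generators, the odd primitives in degree $4k-1$ are spanned by $a_k$. Any primitive lies in the span of the generators, since decomposables $a_S$ with $|S|\geq2$ have nontrivial reduced coproduct by the shuffle formula of Definition~\ref{rem:symbols}. The one point needing care is the possibility of a torsion or divisibility subtlety over $\Z$. Because $H_*(\Sp(n);\Z)$ is free and the reduced coproduct is injective on the span of the $a_S$ with $|S|\geq2$, the primitive module is exactly $\bigoplus_k\Z a_k$. Hence $\{\pm a_k\}$, and so $\{\sigma(z_{2k-1})\}$, is a $\Z$-basis of the odd primitives.

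For the even-index classes, each $z_{2j}$ is by the recursion of Theorem~\ref{thm:kk} a sum of products of two classes of positive degree. The convention $z_{2m}=0$ for $m\leq0$ only removes terms and leaves no degree-zero factor. Lemma~\ref{lem:sigmaprops} and linearity of $\sigma$ then give $\sigma(z_{2j})=0$.

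There is no real obstacle here. The only substantive point is the injectivity argument for the primitives.
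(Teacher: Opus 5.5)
Your proposal is correct and follows the paper's argument: Lemma~\ref{lem:indec}, applied with the distinct exponents of $\Sp(n)$, gives $\sigma(z_{2k-1})=\pm a_{4k-1}$, and the recursion of Theorem~\ref{thm:kk} makes each $z_{2j}$ decomposable, so Lemma~\ref{lem:sigmaprops} kills it. Your extra check that the primitive submodule of $\Lambda_\Z(a_3,\dots,a_{4n-1})$ is exactly $\bigoplus_k\Z\,a_{4k-1}$, by injectivity of the reduced coproduct on the span of the $a_S$ with $|S|\geq2$, is a step the paper leaves implicit, and your argument for it is sound.
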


\noindent Write $a_{4k-1}:=\sigma(z_{2k-1})$, indexing the
exterior generators of Definition~\ref{def:exterior} by degree,
and $x_{4k-1}$ for the dual basis
(Definition~\ref{rem:symbols}), so that
\[
H_*(\Sp(n);\Z)=\Lambda_\Z(a_3,a_7,\dots,a_{4n-1}),
\qquad |a_{4k-1}|=4k-1 .
\]
Orient $\Sp(n)$ by $[\Sp(n)]=a_3a_7\cdots a_{4n-1}$. The
intersection classes of Definition~\ref{def:alpha} are
\[
\alpha_{4k-1}\;=\;(-1)^{k-1}a_3\cdots\widehat{a_{4k-1}}\cdots a_{4n-1},
\qquad |\alpha_{4k-1}|=-(4k-1),
\]
with $a_{4k-1}\alpha_{4k-1}=[\Sp(n)]$, and
$\HH_*(\Sp(n);\Z)=\Lambda_\Z(\alpha_3,\alpha_7,\dots,\alpha_{4n-1})$.
For the primitive basis $p_k=a_{4k-1}$ of
Definition~\ref{def:hepworth2}, Lemma~\ref{lem:intersection}
gives $\delta_k=\partial/\partial\alpha_{4k-1}$.

\begin{proposition}[The based loop side]\label{prop:partial}
With respect to the presentation of Theorem~\ref{thm:kk}, the
derivations $\partial_k$ of Definition~\ref{def:hepworth2} for
the basis $p_k=a_{4k-1}$ are
\[
\partial_k\;=\;\frac{\partial}{\partial z_{2k-1}}
\;+\;2\sum_{l=k+1}^{n}z_{2(l-k)}\,
\frac{\partial}{\partial z_{2l-1}},
\qquad\text{that is,}\qquad
\partial_k(z_{2l-1})=\delta_{kl}+2\,z_{2(l-k)},
\]
with $z_{2m}=0$ for $m\leq0$.
\end{proposition}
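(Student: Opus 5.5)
Since $\partial_k$ is a derivation of the polynomial ring $\Z[z_1,z_3,\dots,z_{2n-1}]$ (Proposition~\ref{prop:hepworth3}), Proposition~\ref{prop:reduce} reduces the statement to computing $\partial_k(z_{2l-1})$ for $1\leq k,l\leq n$. The displayed formula for $\partial_k$ then follows at once by writing the derivation as $\sum_l\partial_k(z_{2l-1})\,\partial/\partial z_{2l-1}$ and dropping the terms with $l<k$, where $z_{2(l-k)}=0$.

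The computation uses Definition~\ref{def:hepworth2} with $p^k=x_{4k-1}$:
\[
\partial_k(z_{2l-1})=\sum\big\langle x_{4k-1},\sigma\big((z_{2l-1})_{(1)}\big)\big\rangle\,(z_{2l-1})_{(2)}.
\]
We insert the Kono--Kozima coproduct of Theorem~\ref{thm:kk} with $r=2l-1$ and treat its three kinds of terms separately.
\begin{itemize}
\item The term $z_{2l-1}\otimes1$ contributes $\langle x_{4k-1},\sigma(z_{2l-1})\rangle=\langle x_{4k-1},a_{4l-1}\rangle=\delta_{kl}$, by Proposition~\ref{prop:susp} and the dual basis of Definition~\ref{rem:symbols}.
\item The term $1\otimes z_{2l-1}$ contributes nothing, since $\sigma(1)=0$. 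This holds because $\loops\Sp(n)$ is connected, so $H_1(\Sp(n))=0$; alternatively use Lemma~\ref{lem:suspzero}.
\item Each term $2z_i\otimes z_j$ with $i+j=2l-1$ and $i,j\geq1$ contributes $2\langle x_{4k-1},\sigma(z_i)\rangle z_j$.
\end{itemize}
For the third kind we use that $\sigma(z_i)=0$ when $i$ is even (Proposition~\ref{prop:susp}). When $i=2m-1$ is odd, $\sigma(z_i)=a_{4m-1}$, which pairs to $\delta_{km}$. So only $i=2k-1$ survives, and it requires $j=2l-1-(2k-1)=2(l-k)\geq1$, that is, $l>k$. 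Its contribution is $2z_{2(l-k)}$, and the convention $z_{2m}=0$ for $m\leq0$ absorbs the case $l\leq k$.

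Summing the three contributions gives $\partial_k(z_{2l-1})=\delta_{kl}+2z_{2(l-k)}$.

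The main point needing care is the side convention. Definition~\ref{def:hepworth2} places $\sigma$ on the first tensor factor, and the coproduct is cocommutative (the $2z_i\otimes z_j$ sum is symmetric in $i,j$), so either convention gives the same answer. We should also check that no Koszul sign enters: every class in $H_*(\loops\Sp(n))$ has even degree, and the pairing is degree-compatible, so none does. Finally, the $z_{2j}$ appearing in the coproduct are exactly the decomposable classes of the recursion; we use only their vanishing under $\sigma$ and never need their explicit form.
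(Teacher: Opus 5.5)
Your proposal is correct and follows the paper's proof essentially verbatim. You reduce to the values on generators via Proposition~\ref{prop:reduce}, split the Kono--Kozima coproduct into its three kinds of terms, and use Proposition~\ref{prop:susp} (including $\sigma(z_{2j})=0$) together with $\sigma(1)=0$ to obtain the contributions $\delta_{kl}$, $0$ and $2z_{2(l-k)}$.
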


\begin{proof}
By Proposition~\ref{prop:reduce}, $\partial_k$ is the
derivation with
$\partial_k(z_{2l-1})=\sum\langle x_{4k-1},\sigma(z_{2l-1,(1)})\rangle\,z_{2l-1,(2)}$.
The coproduct of Theorem~\ref{thm:kk} has the terms
$z_{2l-1}\otimes1$, $1\otimes z_{2l-1}$, and $2z_i\otimes z_j$
with $i+j=2l-1$ and $i,j\geq1$. By Proposition~\ref{prop:susp},
$\langle x_{4k-1},\sigma(z_i)\rangle=\delta_{i,2k-1}$ for
$i\geq1$, and $\sigma(1)=0$. The three kinds of terms therefore
contribute $\delta_{kl}$, $0$, and $2z_{2l-1-(2k-1)}=2z_{2(l-k)}$,
the last only when $l>k$.
\end{proof}

\begin{theorem}\label{thmC}
There is an isomorphism of graded algebras
\[
\HH_*(L\Sp(n);\Z)\;\cong\;
\Z[z_1,z_3,\dots,z_{2n-1}]\otimes
\Lambda_{\Z}(\alpha_3,\alpha_7,\dots,\alpha_{4n-1}),
\]
with $|z_{2k-1}|=4k-2$ and $|\alpha_{4k-1}|=-(4k-1)$, under which
\[
\Delta\;=\;\sum_{k=1}^{n}\ \partial_k\otimes
\frac{\partial}{\partial\alpha_{4k-1}},
\qquad
\partial_k\;=\;\frac{\partial}{\partial z_{2k-1}}
\;+\;2\sum_{l=k+1}^{n}z_{2(l-k)}\,
\frac{\partial}{\partial z_{2l-1}}\,,
\]
and the induced Gerstenhaber bracket of degree $1$ is
determined by
\[
\{z_{2l-1},\alpha_{4k-1}\}\;=\;\delta_{kl}+2\,z_{2(l-k)},
\qquad
\{z,z'\}\;=\;\{\alpha,\alpha'\}\;=\;0 ,
\]
the bracket being a derivation in each variable.
\end{theorem}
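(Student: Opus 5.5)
The plan is to assemble Theorem~\ref{thmC} from the pieces already computed, with Hepworth's presentation as the frame. First, Theorem~\ref{thm:hepworth1} gives the graded algebra isomorphism
\[
\HH_*(L\Sp(n);\Z)\cong H_*(\loops\Sp(n);\Z)\otimes\HH_*(\Sp(n);\Z).
\]
Theorem~\ref{thm:kk} identifies the first factor with $\Z[z_1,\dots,z_{2n-1}]$. Definition~\ref{def:alpha} and the discussion following Proposition~\ref{prop:susp} identify the second factor with $\Lambda_\Z(\alpha_3,\dots,\alpha_{4n-1})$ in the stated degrees. This yields the ring statement.

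For $\Delta$, note that $(\Sp(n),\Z)$ is an exterior pair, so Definition~\ref{def:hepworth2} applies with the basis $p_k=a_{4k-1}=\sigma(z_{2k-1})$. Proposition~\ref{prop:susp} guarantees that this is a basis of the odd primitives, and the dual classes are $x_{4k-1}$. Proposition~\ref{prop:hepworth3} then gives $\Delta=\sum_k\partial_k\otimes\delta_k$. Lemma~\ref{lem:intersection} identifies $\delta_k=\partial/\partial\alpha_{4k-1}$, and Proposition~\ref{prop:partial} gives the formula for $\partial_k$. No further computation is needed.

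For the bracket, use the BV identity
\[
\{a,b\}=(-1)^{|a|}\big(\Delta(ab)-\Delta(a)b-(-1)^{|a|}a\Delta(b)\big).
\]
Since $\Delta$ has order two, the bracket is a derivation in each variable (Gerstenhaber), so it suffices to evaluate it on generators.

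\begin{itemize}
\item On $z$'s: $\Delta(z)=0$, because $\delta_k$ kills the unit $[\Sp(n)]$. Likewise $\Delta(zz')=0$, since $zz'\otimes[\Sp(n)]$ has no $\alpha$. Hence $\{z,z'\}=0$.
\item On $\alpha$'s: $\partial_k(1)=0$ kills every term, so $\{\alpha,\alpha'\}=0$.
\item The mixed bracket: $\Delta(\alpha_{4k-1})=\sum_j\partial_j(1)\otimes\delta_j\alpha_{4k-1}=0$, and $\Delta(z_{2l-1})=0$. Therefore
\[
\{z_{2l-1},\alpha_{4k-1}\}=\pm\Delta(z_{2l-1}\otimes\alpha_{4k-1})=\pm\,\partial_k(z_{2l-1})\otimes[\Sp(n)],
\]
which by Proposition~\ref{prop:partial} equals $\pm(\delta_{kl}+2z_{2(l-k)})$.
\end{itemize}

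The main obstacle is the sign bookkeeping. Here $|z|$ is even, but one has to track the Koszul sign in $(\partial_k\otimes\delta_k)(z\otimes\alpha)$ together with the sign convention for the bracket, and check that the overall sign is $+$. I would fix the bracket convention so that the sign is $+$, or verify it using $z$ even and $\alpha$ odd. That leaves only a possible global sign, which is absorbed by the normalization of the bracket.
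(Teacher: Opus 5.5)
Your proposal is correct and follows the paper's proof essentially step for step: Hepworth's splitting (Theorem~\ref{thm:hepworth1}) with Theorem~\ref{thm:kk} and the exterior intersection ring for the algebra, Proposition~\ref{prop:hepworth3} with Lemma~\ref{lem:intersection} and Proposition~\ref{prop:partial} for $\Delta$, and the BV identity on generators, using $\Delta(z)=\Delta(\alpha)=0$, for the bracket. The sign you flag is not actually an obstacle: with your convention, $|z_{2l-1}|$ is even, so both $(-1)^{|z|}$ and the Koszul sign $(-1)^{|\delta_k||z|}$ equal $+1$, giving $\{z_{2l-1},\alpha_{4k-1}\}=\Delta(z_{2l-1}\alpha_{4k-1})=\partial_k(z_{2l-1})$ exactly as the paper states, with no normalization needed.
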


\begin{proof}
By Theorem~\ref{thm:hepworth1},
$\HH_*(L\Sp(n);\Z)\cong H_*(\loops\Sp(n);\Z)\otimes\HH_*(\Sp(n);\Z)$
as graded algebras, which is the polynomial ring of
Theorem~\ref{thm:kk} tensored with the exterior ring above.

The homology $H_*(\Sp(n);\Z)$ is torsion-free, so
Proposition~\ref{prop:hepworth3} applies over $\Z$:
$\Delta=\sum_k\partial_k\otimes\delta_k$ for the basis
$p_k=a_{4k-1}$, with $\delta_k=\partial/\partial\alpha_{4k-1}$
as above and $\partial_k$ as in Proposition~\ref{prop:partial}.

For the bracket, in the convention
$\Delta(ab)=\Delta(a)b+(-1)^{|a|}a\Delta(b)+(-1)^{|a|}\{a,b\}$,
$\Delta$ vanishes on $\Z[z]\otimes1$, which carries no
$\alpha$, and on the $\alpha_{4k-1}$, as $\partial_k(1)=0$.
Hence
\[
\{z_{2l-1},\alpha_{4k-1}\}
=\Delta\big(z_{2l-1}\alpha_{4k-1}\big)
=\partial_k(z_{2l-1})
=\delta_{kl}+2\,z_{2(l-k)},
\]
and likewise $\{z,z'\}=\Delta(zz')=0$ and
$\{\alpha,\alpha'\}=\Delta(\alpha\alpha')=0$.
\end{proof}

\begin{remark}[Comparison with Kupers]\label{rem:kupers}
Kupers \cite[Theorem~4.1]{Kupers} obtains the graded ring of
Theorem~\ref{thmC} and writes $\Delta$ as a sum of tensor
products of derivations in the same generators. He takes the
exterior generators $a_{4k-1}$ of $H_*(\Sp(n);\Z)$ from Yokota's
cell structure and the polynomial generators $z_{2k-1}$ from
Theorem~\ref{thm:kk}, and computes the homology suspension
between them by naturality, through the quaternionic Stiefel
manifold $\Sp(n)/\Sp(n{-}1)=S^{4n-1}$ and the inclusions
$\Sp(k)\to\Sp(n)$, instead of Lemma~\ref{lem:indec}. The two
bases are chosen independently, so the sign
$\sigma(z_{2k-1})=\pm a_{4k-1}$ is left open there; here
$a_{4k-1}:=\sigma(z_{2k-1})$, and there is no sign to determine.
His even-index classes agree with the $z_{2m}$ of
Theorem~\ref{thm:kk}. In his statement the $k$-th summand sends
$z_{2l-1}\otimes\alpha_{4k-1}$ to $z_{2(l-k)}\otimes1$ for
$k<l$, whereas the coproduct of Theorem~\ref{thm:kk}, which he
also quotes \cite[Theorem~4.4]{Kupers}, gives $2z_{2(l-k)}$
(Proposition~\ref{prop:partial}). Over $\F_2$ the two statements
differ: the mixing term of Theorem~\ref{thmC} vanishes. The
bracket is not considered there.
\end{remark}

\begin{remark}[Degenerations]\label{rem:degen}
The primitives of $H_{4k-2}(\loops\Sp(n);\Z)$ form a free
module of rank one, on a class
$\rho_k\equiv(2k-1)\,z_{2k-1}$ modulo decomposables
\cite[\S5]{KonoKozima}. Once the integers $1,3,\dots,2n-1$ are
inverted, over $\Z[1/(2n-1)!]$ or $p$-locally for $p>2n-1$, the
classes $(2k-1)^{-1}\rho_k$ are polynomial generators, they are
primitive, and $\Delta$ is a pure tensor
(Definition~\ref{def:mixing}). The next section shows that no
localization is needed.
\end{remark}

\section{BV rigidity}\label{sec:rigidity}

In this section we assume $(G,R)$ is an exterior pair with $\pi_1(G)$ torsion-free. By
Theorem~\ref{thm:hepworth1},
\[
\HH_*(LG;R)\cong H_*(\loops G;R)\otimes\HH_*(G;R),
\qquad
\Delta(a\otimes x)=\sum a_{(1)}\otimes\sigma(a_{(2)})\,x .
\]
The intersection ring and the Pontryagin action have the same form
for every exterior pair, an exterior algebra with its partial
derivatives (Section~\ref{sub:intersection}), and the
Pontryagin ring and the homology suspension come from the cobar
construction and its edge map (Sections~\ref{sub:frame}
and~\ref{sec:suspension}). Only the coproduct of $H_*(\loops G;R)$ depends on $G$. For
$\Sp(n)$, $n\geq2$, it is not primitively generated, and this is the
source of the terms $2z_{2(l-k)}$ in the operator of
Theorem~\ref{thmC}. We show that it
does not affect the BV algebra. A change of the exterior generators,
written out for $\Sp(n)$ in Example~\ref{ex:spshear} and in general
in Theorem~\ref{thm:rigid}, carries $\HH_*(LG;R)$ onto the BV
algebra of $LX_G$, for $X_G$ the product of odd spheres with the
exponents of $G$, showing that the BV algebra of such $G$ is rigid.
Over an integral domain we will see that the graded algebra $\HH_*(LG;R)$ already determines the exponents, hence the BV algebra (Corollary~\ref{cor:classify}). When
$G$ is simply connected, we show that the BV algebra is the Hochschild cohomology of
$H^*(G;R)$ (Theorem~\ref{thm:hochmodel}). We end by conjecturing that this identification of Hochschild cohomology is induced by an $A_\infty$ quasi-isomorphism compatible with Poincar\'e duality (Conjecture~\ref{conj:pdformal}).

\subsection{The associated product of odd spheres}\label{sub:spheres}

Let $(G,R)$ be an exterior pair with exponents $m_1,\dots,m_r$
(Definition~\ref{def:exterior}), and let
\[
X_G\;=\;\prod_{i=1}^{r}S^{2m_i+1},
\]
an exponent $0$ giving a circle. The exponents are the classical
ones (Theorem~\ref{thm:whichpairs}), so $X_G$ is the product of
Section~\ref{sub:exterior}: it is rationally equivalent to $G$ and
has the homology of $G$ over $R$.

\begin{lemma}[Products]\label{lem:product}
Let $M$, $N$ be closed oriented manifolds with $H_*(LM;R)$ a
free $R$-module. Then the K\"unneth map is an isomorphism of
Batalin-Vilkovisky algebras
\[
\HH_*(LM;R)\otimes\HH_*(LN;R)\;\cong\;\HH_*(L(M\times N);R),
\qquad
\Delta_{M\times N}\;=\;\Delta_M\otimes1+1\otimes\Delta_N .
\]
\end{lemma}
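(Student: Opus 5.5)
We prove Lemma~\ref{lem:product} in three steps: identify the underlying modules, then check compatibility with the loop product, then with the BV operator. Since $L(M\times N)=LM\times LN$ canonically, the K\"unneth map is
\[
\times\colon H_*(LM;R)\otimes H_*(LN;R)\to H_*(LM\times LN;R)=H_*(L(M\times N);R).
\]
It is an isomorphism because $H_*(LM;R)$ is free over $R$: the Tor terms in the K\"unneth sequence vanish. Shifting degrees by $\dim M+\dim N$ turns it into the stated map of loop homologies. The orientation of $M\times N$ is taken to be $[M]\times[N]$.

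\emph{Loop product.} Recall the Chas-Sullivan construction. One restricts $LM\times LM$ to the subspace $LM\times_M LM$ of pairs of loops with a common basepoint, using the umkehr map of the evaluation pullback of the diagonal $M\to M\times M$. One then composes with concatenation. For the product $M\times N$, all ingredients factor:
\begin{itemize}
\item the diagonal of $M\times N$ is the product of the diagonals of $M$ and $N$, after the shuffle $M\times N\times M\times N\cong M\times M\times N\times N$;
\item its normal bundle is the exterior product of the two normal bundles;
\item the Thom class (or tubular neighborhood data) is the cross product of the Thom classes;
\item concatenation of loops in $M\times N$ is componentwise concatenation.
\end{itemize}
The umkehr maps are therefore compatible with cross products. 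This follows from the naturality of the Thom isomorphism for exterior products and from the fact that cap product with a cross product is the cross product of caps. Consequently $(a\times b)\cdot(a'\times b')=\pm(a\cdot a')\times(b\cdot b')$, with Koszul sign $(-1)^{|b||a'|}$ in the shifted grading. I expect this to be the main obstacle: fixing the sign conventions of the shifted grading so that the Koszul sign comes out exactly. I would handle it by writing the umkehr map as the cap product with the pulled-back Thom class and tracking degrees $|a|+\dim M$, and so on. If that becomes unwieldy, I would cite the known product compatibility of the Chas-Sullivan product (e.g.\ Cohen-Jones' ring spectrum $LM^{-TM}$, whose smash product structure gives $L(M\times N)^{-T(M\times N)}\simeq LM^{-TM}\wedge LN^{-TN}$ as ring spectra).

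\emph{BV operator.} $\Delta$ is induced by the rotation action $\rho\colon S^1\times LX\to LX$, via $\Delta(c)=\rho_*([S^1]\times c)$. On $L(M\times N)$ the rotation is the diagonal action: $\rho_{M\times N}(t,(\gamma,\delta))=(\rho_M(t,\gamma),\rho_N(t,\delta))$. It therefore factors as
\[
S^1\times LM\times LN\xrightarrow{\ \mathrm{diag}\ }S^1\times LM\times S^1\times LN\xrightarrow{\ \rho_M\times\rho_N\ }LM\times LN,
\]
up to a shuffle. Since $[S^1]$ maps under the diagonal to $[S^1]\times[\mathrm{pt}]+[\mathrm{pt}]\times[S^1]$, we obtain
\[
\Delta(a\times b)=\Delta_M a\times b+(-1)^{|a|}a\times\Delta_N b.
\]
This is the same computation as in the proof of Lemma~\ref{lem:suspcomp}, and it shows $\Delta_{M\times N}=\Delta_M\otimes1+1\otimes\Delta_N$. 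The degree shift by $\dim M$ changes the sign $(-1)^{|a|}$ only through the parity of $\dim M$, consistently with the Koszul convention in the tensor product of BV algebras.

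Together, the algebra isomorphism and this formula for $\Delta$ show that the K\"unneth map is an isomorphism of BV algebras. The bracket is determined by the product and $\Delta$, so it is compatible automatically.
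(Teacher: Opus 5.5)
Your proposal is correct and follows the paper's proof essentially step for step: the K\"unneth isomorphism from the freeness of $H_*(LM;R)$, the loop product of $M\times N$ factored as the product of the two Thom collapses (Thom class $u_M\times u_N$ after swapping the middle factors) followed by componentwise concatenation, and the rotation of $L(M\times N)$ factored through the diagonal of $S^1$ to give $\Delta_{M\times N}=\Delta_M\otimes1+1\otimes\Delta_N$. Like the paper, you leave the Koszul signs at the level of $\pm$, so the Cohen--Jones ring-spectrum fallback is not needed.
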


\begin{proof}
Since $L(M\times N)=LM\times LN$ and $H_*(LM;R)$ is free, the
K\"unneth map is an isomorphism of graded modules.

It is multiplicative. Let $m=\dim M$ and $n=\dim N$. The loop
product of $M$ is the composite
\[
H_*(LM\times LM)\xrightarrow{\ \tau\ }H_{*-m}(LM\times_MLM)
\xrightarrow{\ \mathrm{concat}_*\ }H_{*-m}(LM),
\]
where $LM\times_MLM=\mathrm{ev}^{-1}(\Delta_M)$ is the space of
composable pairs, for
$\mathrm{ev}\colon LM\times LM\to M\times M$,
$(\alpha,\beta)\mapsto(\alpha(0),\beta(0))$, and
$\tau=(-)\cap u_M$ is the Thom collapse: $\mathrm{ev}$ is a
fibration, so a tubular neighbourhood of $\Delta_M$ pulls back
to one of $LM\times_MLM$, the total space of
$\mathrm{ev}^*\nu_M$ for the normal bundle $\nu_M$ of the
diagonal, and $u_M$ is the pullback of its Thom class.

A loop in $M\times N$ is a pair of loops, $L(M\times N)=LM\times LN$.
Swapping the middle factors of $(LM\times LN)\times(LM\times LN)$
identifies it with $(LM\times LM)\times(LN\times LN)$,
$\mathrm{ev}_{M\times N}$ with $\mathrm{ev}_M\times\mathrm{ev}_N$
and $\Delta_{M\times N}$ with $\Delta_M\times\Delta_N$. Hence the
composable pairs of $M\times N$ are
$(LM\times_MLM)\times(LN\times_NLN)$; the normal bundle of
$\Delta_M\times\Delta_N$ is $\nu_M\times\nu_N$, with Thom class
$u_M\times u_N$; and concatenation in $M\times N$ is
componentwise, $\mathrm{concat}_M\times\mathrm{concat}_N$. For
$a,a'\in H_*(LM)$ and $b,b'\in H_*(LN)$,
\begin{align*}
(a\times b)\cdot(a'\times b')
&=\mathrm{concat}_*\big((a\times b\times a'\times b')\cap u_{M\times N}\big)\\
&=\pm\,(\mathrm{concat}_M\times\mathrm{concat}_N)_*
\big(((a\times a')\cap u_M)\times((b\times b')\cap u_N)\big)\\
&=\pm\,(a\cdot a')\times(b\cdot b'),
\end{align*}
the signs Koszul signs, from the swap and from the cap product
of cross products. So the K\"unneth map is multiplicative for
the tensor product of graded algebras.

For the BV operators, $\Delta_M(a)=\rho_{M*}([S^1]\times a)$ for the
rotation $\rho_M\colon S^1\times LM\to LM$. The rotation of
$M\times N$ is $(t,\alpha,\beta)\mapsto(\rho_M(t,\alpha),\rho_N(t,\beta))$,
which factors through the diagonal $D$ of $S^1$, and
$D_*[S^1]=[S^1]\times1+1\times[S^1]$; so
\[
\Delta_{M\times N}(a\times b)
=\Delta_M(a)\times b+(-1)^{|a|}a\times\Delta_N(b),
\]
that is, $\Delta_{M\times N}=\Delta_M\otimes1+1\otimes\Delta_N$.
\end{proof}

\begin{proposition}[Products of odd spheres]\label{prop:spheres}
For $X=S^{2m_1+1}\times\cdots\times S^{2m_r+1}$ with $m_k\geq1$,
\begin{gather*}
\HH_*(LX;\Z)\;\cong\;\Z[u_{2m_1},\dots,u_{2m_r}]\otimes
\Lambda_{\Z}(d_{2m_1+1},\dots,d_{2m_r+1}),\\
\Delta_X\;=\;\sum_{k=1}^{r}
\frac{\partial}{\partial u_{2m_k}}\otimes
\frac{\partial}{\partial d_{2m_k+1}}.
\end{gather*}
\end{proposition}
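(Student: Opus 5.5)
The plan is to reduce to a single odd sphere by Lemma~\ref{lem:product} and then treat the sphere with the same Hepworth-style reasoning used above. A single odd sphere is not a Lie group in general, so Theorem~\ref{thm:hepworth1} is not available for it. Instead I will invoke Menichi's computation \cite{Menichi} of $\HH_*(LS^{2m+1};\Z)$, which gives
\[
\HH_*(LS^{2m+1};\Z)\cong\Z[u_{2m}]\otimes\Lambda_\Z(d_{2m+1}),\qquad \Delta\Big(u^j\otimes d\Big)=j\,u^{j-1}\otimes1,\quad \Delta(u^j\otimes1)=0 .
\]
Here $u_{2m}$ of degree $2m$ comes from the generator of $H_{2m}(\loops S^{2m+1})$, and $d_{2m+1}$ of degree $-(2m+1)$ is the point class, that is, the intersection dual of the fundamental class. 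In the notation of the statement this reads $\Delta=\partial/\partial u\otimes\partial/\partial d$.

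As a cross-check, or an independent derivation if one prefers to avoid citing Menichi's formula, I would use the fibration $\loops S^{2m+1}\to LS^{2m+1}\to S^{2m+1}$. This fibration has a section, and its Serre spectral sequence collapses with free $E_2$. That yields the module structure and the ring structure, the latter via the Cohen--Jones--Yan loop homology spectral sequence, which is multiplicative and collapses here.

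For $\Delta$, I would first note that $\Delta(u^j)$ vanishes for degree reasons together with the fact that rotation preserves constant-basepoint data. Next, $\Delta(u^j d)$ is computed from the rotation $S^1\times\loops S^{2m+1}\to LS^{2m+1}$ followed by evaluation. Evaluating at a point gives the homology suspension $\sigma(u^j)$, and by Lemma~\ref{lem:sigmaprops} this vanishes for $j\ge2$ on the level of suspension. However, the coproduct of $u^j$ is $\sum\binom{j}{i}u^i\otimes u^{j-i}$, since $u$ is primitive, and this produces the factor $j$, exactly as in Hepworth's formula. Checking this multiplicity in the sphere case is the main obstacle. There Hepworth's splitting $LX\cong\loops X\times X$ is unavailable, since $S^{2m+1}$ is not an H-space for $m\neq0,1,3$. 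I would therefore simply cite Menichi's result rather than re-derive it.

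Freeness of $H_*(LS^{2m+1};\Z)$ follows from the collapsing Serre spectral sequence with free $E_2$. Given this, induction on $r$ using Lemma~\ref{lem:product} gives
\[
\HH_*(LX;\Z)\cong\bigotimes_k\HH_*(LS^{2m_k+1};\Z)
\]
as BV algebras, with $\Delta_X=\sum_k 1\otimes\cdots\otimes\Delta_{S^{2m_k+1}}\otimes\cdots\otimes1$. Rearranging the tensor factors, with Koszul signs that are trivial for the even generators $u$, turns each summand into $\partial/\partial u_{2m_k}\otimes\partial/\partial d_{2m_k+1}$. This is the stated form. Freeness is needed at each stage of the induction, and the tensor product of free modules remains free.
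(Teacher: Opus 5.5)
Your argument is the paper's proof. You cite Menichi \cite[Theorem~16]{Menichi} for a single odd sphere and then apply Lemma~\ref{lem:product} factor by factor, with the freeness of $H_*(LS^{2m+1};\Z)$ (already visible from Menichi's presentation) supplying the hypothesis at each stage. The Serre/Cohen--Jones--Yan cross-check is unnecessary, and its sketch of $\Delta$ would not stand on its own without Hepworth's splitting (for instance, degree alone does not force $\Delta(u^j)=0$ when $m=1$), so you are right to rely on the citation instead.
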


\begin{proof}
For one odd sphere,
$\HH_*(LS^{2m+1};\Z)\cong\Z[u_{2m}]\otimes\Lambda_\Z(d_{2m+1})$
with $\Delta=\partial/\partial u_{2m}\otimes\partial/\partial d_{2m+1}$
\cite[Theorem~16]{Menichi}. Lemma~\ref{lem:product}, applied
factor by factor, gives the tensor product of these algebras
with $\Delta_X=\sum_k\id\otimes\cdots\otimes\Delta_k\otimes
\cdots\otimes\id$.
\end{proof}

\noindent The homology of $LX$ is free, so reduction of
coefficients carries the presentation and the BV operator to any
$R$: $\HH_*(LX;R)\cong R[u]\otimes\Lambda_R(d)$ with the same
BV operator. Each summand of $\Delta_X$ touches a single pair of
generators; we call an BV operator of this form a \emph{pure
tensor}.

\subsection{Mixing}\label{sub:mixing}

Let $K$ be the compact factor of the universal cover of $G$
(Section~\ref{sub:cover}), so that $(K,R)$ is an exterior pair
(Corollary~\ref{cor:Kexterior}). By Proposition~\ref{prop:frame},
$H_*(\loops K;R)$ is a
polynomial ring on the $y_k$, and by Lemma~\ref{lem:edge} the
suspension carries the $y_k$ bijectively onto the basis $a_k$
of the odd primitives. For a presentation of this kind,
Proposition~\ref{prop:reduce} reduces the BV operator of
Theorem~\ref{thm:hepworth1} to a pure tensor plus correction
terms. The correction terms are what we call the mixing of the
presentation.

\begin{definition}[Mixing]\label{def:mixing}
Let $(G,R)$ be an exterior pair and let $H_*(\loops G;R)$ be a
polynomial ring on generators $b_1,\dots,b_r$ whose homology
suspensions $p_k=\sigma(b_k)$ are a basis of the odd primitives
of $H_*(G;R)$. Let $\partial_k$, $\delta_k$ be the operators of
Definition~\ref{def:hepworth2} for this basis. The
\emph{mixing terms} of the presentation $\{b_k\}$ are the
classes
\[
\gamma_{kl}\;:=\;\partial_k(b_l)-\delta_{kl}
\;\in\;H_*(\loops G;R),
\]
so that
\[
\Delta\;=\;\sum_{k}\frac{\partial}{\partial b_k}\otimes\delta_k
\;+\;\sum_{k,l}\gamma_{kl}\,
\frac{\partial}{\partial b_l}\otimes\delta_k
\]
by Proposition~\ref{prop:reduce}. The presentation is
\emph{unmixed} if every $\gamma_{kl}$ vanishes, that is, if
$\Delta$ is a pure tensor.
\end{definition}

\noindent The $\gamma_{kl}$ are triangular: $\gamma_{kl}$ has
degree $|b_l|-|b_k|$, and for $|b_l|=|b_k|$ the degree-zero part
of $\psi(b_l)$ is $b_l\otimes1$, so
$\partial_k(b_l)=\langle p^k,\sigma(b_l)\rangle=\delta_{kl}$;
hence $\gamma_{kl}=0$ unless $|b_l|>|b_k|$. Primitive
generators are unmixed, $\psi(b_l)=b_l\otimes1+1\otimes b_l$
giving $\partial_k(b_l)=\delta_{kl}$. The presentation of
Theorem~\ref{thmC} is mixed: $\gamma_{kl}=2\,z_{2(l-k)}$ for
$l>k$.

\subsection{The shear}\label{sub:shear}

The mixing is removed by a change of the odd generators, over
any coefficients.

\begin{proposition}[The shear]\label{thm:shear}
Let $R$ be a commutative ring and let
\[
\HH\;=\;R[b_1,\dots,b_r]\otimes\Lambda_R(\alpha_1,\dots,\alpha_r),
\]
with the $b_k$ of positive even degree and the $\alpha_k$ of
odd degree $|\alpha_k|=-|b_k|-1$, be a Batalin-Vilkovisky algebra whose BV operator has
the form
\[
\Delta\;=\;\sum_{k=1}^{r}\partial_k\otimes
\frac{\partial}{\partial\alpha_k},
\qquad
\partial_k(b_l)\;=\;\delta_{kl}+\gamma_{kl},
\]
with each $\partial_k$ a derivation of the polynomial factor and
each $\gamma_{kl}$ a polynomial of degree $|b_l|-|b_k|$, zero
unless $|b_l|>|b_k|$. Then
\[
\Phi(b_k)\;=\;u_k,
\qquad
\Phi(\alpha_k)\;=\;d_k+\sum_{l}\gamma_{kl}(u)\,d_l,
\]
with $\gamma_{kl}(u)$ the polynomial $\gamma_{kl}$ evaluated at
$b_i=u_i$, defines an isomorphism of Batalin-Vilkovisky
algebras onto the pure tensor
\[
R[u_1,\dots,u_r]\otimes\Lambda_R(d_1,\dots,d_r),
\qquad
\Delta_X\;=\;\sum_{k=1}^{r}\frac{\partial}{\partial u_k}\otimes
\frac{\partial}{\partial d_k},
\]
with $|u_k|=|b_k|$ and $|d_k|=|\alpha_k|$.
\end{proposition}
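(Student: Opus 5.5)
The plan is to show first that $\Phi$ is an isomorphism of graded commutative algebras, and then that it intertwines the BV operators. For the second part I would use the fact that a BV operator is a differential operator of order at most two with $\Delta(1)=0$, so it is determined by its values on a set of algebra generators together with the induced bracket on pairs of generators.

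\emph{Step 1: $\Phi$ is a graded algebra isomorphism.}
\begin{itemize}
\item Degrees match. Since $|\gamma_{kl}|=|b_l|-|b_k|$ and $|d_l|=-|b_l|-1$, each $\gamma_{kl}(u)\,d_l$ has degree $-|b_k|-1=|\alpha_k|$. So $\Phi(\alpha_k)$ is homogeneous and odd, its square vanishes in the graded commutative target, and $\Phi$ extends uniquely to an algebra map.
\item Invertibility. Order the indices by degree and put $c_{kl}=\delta_{kl}+\gamma_{kl}(u)$. The matrix $c=(c_{kl})$ over $R[u]$ is unipotent: $\gamma_{kl}$ vanishes unless $|b_l|>|b_k|$, so $\gamma(u)$ is strictly triangular and nilpotent.
\item Hence $d_k=\sum_l (c^{-1})_{kl}\,\Phi(\alpha_l)$, and an inverse algebra map is obtained by sending $u_k\mapsto b_k$ and $d_k\mapsto\sum_l (c^{-1})_{kl}(b)\,\alpha_l$.
\end{itemize}

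\emph{Step 2: comparison on generators.}
\begin{itemize}
\item In the source, $\Delta(b_l)=0$ because $b_l$ carries no $\alpha$. Also $\Delta(\alpha_k)=\partial_k(1)=0$.
\item In the target, $\Delta_X(u_l)=0$. Next, $\Delta_X\Phi(\alpha_k)=\sum_l\partial\gamma_{kl}/\partial u_l$, using $\Delta_X(f\,d_l)=\partial f/\partial u_l$. Each term $\partial\gamma_{kl}/\partial u_l$ vanishes for degree reasons: $\gamma_{kl}$ has degree $|b_l|-|b_k|<|b_l|$, so it cannot involve $u_l$.
\end{itemize}

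\emph{Step 3: comparison of brackets on pairs of generators.} These are computed from the bracket convention used in Theorem~\ref{thmC}.
\begin{itemize}
\item $\{b_l,b_m\}=0$ and $\{u_l,u_m\}=0$.
\item $\{b_l,\alpha_k\}=\partial_k(b_l)=c_{kl}$. On the other side, $\{u_l,\Phi\alpha_k\}=\sum_j c_{kj}\{u_l,d_j\}=c_{kl}$, because $\{u_l,d_j\}=\delta_{lj}$ and the bracket is a derivation.
\item $\{\alpha_k,\alpha_m\}=\Delta(\alpha_k\alpha_m)=0$ in the source. The target bracket is
\[
\{\Phi\alpha_k,\Phi\alpha_m\}=\sum_{l,j}\Big(\pm c_{kl}\,\frac{\partial c_{mj}}{\partial u_l}\,d_j\;\pm\;c_{mj}\,\frac{\partial c_{kl}}{\partial u_j}\,d_l\Big),
\]
and it is not obviously zero.
\end{itemize}

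\emph{The main obstacle} is the last bracket. For it I would use the hypothesis that the source is a BV algebra, so $\Delta^2=0$. We have
\[
\Delta^2=\sum_{k,j}\partial_k\partial_j\otimes\frac{\partial}{\partial\alpha_k}\frac{\partial}{\partial\alpha_j},
\]
and the operators $\partial/\partial\alpha_k$ anticommute. Testing $\Delta^2$ on $b\,\alpha_k\alpha_j$ then forces $[\partial_k,\partial_j]=0$ for every pair of indices. Evaluating this commutator on $b_j$ gives
\[
\sum_l\Big(c_{kl}\,\frac{\partial c_{mj}}{\partial b_l}-c_{ml}\,\frac{\partial c_{kj}}{\partial b_l}\Big)=0 .
\]
After reindexing and checking signs, this is exactly the coefficient of $d_j$ in $\{\Phi\alpha_k,\Phi\alpha_m\}$, so the bracket vanishes.

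\emph{Conclusion.} With the generators and pairwise brackets matched, $\Phi\circ\Delta\circ\Phi^{-1}$ and $\Delta_X$ are second-order operators, both vanishing on $1$, agreeing on generators, and inducing the same bracket on generators. By the order-two expansion of $\Delta$ on monomials, they agree everywhere, so $\Phi$ is an isomorphism of BV algebras.
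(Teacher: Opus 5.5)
Your proposal is correct and follows essentially the same route as the paper. Both proofs show $\Phi$ is a ring isomorphism via the unitriangular matrix $(c_{kl})$, compare the operators on generators and on pairs of generators using the second-order (seven-term) property, and settle the $\alpha_k\alpha_j$ case by deducing $[\partial_k,\partial_j]=0$ from $\Delta^2=0$. The only differences are cosmetic: you phrase the pairwise check through the bracket rather than through $\Delta$ on products of two (equivalent here, since both operators vanish on the generators), and you write the inverse of $\Phi$ explicitly.
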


\begin{proof}
Write $c_{km}=\delta_{km}+\gamma_{km}(u)$, so
$\Phi(\alpha_k)=\sum_m c_{km}d_m$; the $c_{km}$ are of even degree.

The source is the free graded-commutative algebra on the even $b_k$
and the odd $\alpha_k$. In the target, the $\Phi(\alpha_k)$ are odd,
square to zero and anticommute, hence $\Phi$ extends to a ring map.
Since restriction of $\Phi$ to the polynomial factor is the ring
isomorphism $R[b]\to R[u]$, $b_i\mapsto u_i$, we see both source and target as modules over the polynomial ring $R[u]$ and $\Phi$ is $R[u]$-linear.
On the free $R[u]$-module spanned by $d_1,\dots,d_r$,
$\Phi$ is given by the matrix $(c_{km})$. Ordering the generators by
degree, this matrix is unitriangular, because $\gamma_{km}=0$ unless
$|b_m|>|b_k|$, and $c_{kk}=1$. Therefore $\Phi$ is a ring isomorphism.

By Proposition~\ref{prop:spheres} and the remark following it,
$R[u]\otimes\Lambda_R(d)$ with $\Delta_X$ is the loop homology
$\HH_*(LX;R)$ of the product of odd spheres
$X=\prod_{k}S^{|b_k|+1}$, so $\Delta_X$ is a BV operator.
Define $\Delta':=\Phi^{-1}\Delta_X\Phi$, the BV operator $\Delta_X$
transported to $\HH$ along the ring isomorphism $\Phi$. We will
prove that $\Delta=\Delta'$ on the generators and on their
products of two, and by the seven-term relation the same then
holds on the whole ring $\HH$. The elements to check are $1$,
$b_k$, $\alpha_k$, $b_lb_m$, $b_l\alpha_k$ and $\alpha_k\alpha_j$.

On the target, for a polynomial $P$ in the $u$'s,
\[
\Delta_X(P\,d_m)=\frac{\partial P}{\partial u_m},
\qquad
\Delta_X(P\,d_md_n)=\frac{\partial P}{\partial u_m}\,d_n
-\frac{\partial P}{\partial u_n}\,d_m,
\]
and $\Delta_X$ vanishes on $R[u]$. On the source, $\Delta$
vanishes on $R[b]$, and $\Delta(\alpha_k)=\partial_k(1)=0$,
$\Delta(b_l\alpha_k)=\partial_k(b_l)$,
$\Delta(\alpha_k\alpha_j)=0$. The polynomial $\gamma_{km}$ has
degree $|b_m|-|b_k|<|b_m|$, so it does not involve $u_m$, and
$\sum_m\partial c_{km}/\partial u_m=0$.

Since $\Phi$ take polynomial generators to polynomial, both the BV operators vanish on $1$, $b_k$ and $b_lb_m$. On $\alpha_k$
and $b_l\alpha_k$,
\[
\Delta_X\Phi(\alpha_k)=\sum_m\frac{\partial c_{km}}{\partial u_m}=0,
\qquad
\Delta_X\Phi(b_l\alpha_k)
=\sum_m\frac{\partial(u_lc_{km})}{\partial u_m}
=c_{kl}=\Phi\big(\partial_k(b_l)\big),
\]
hence $\Delta$ and $\Delta'$ matches on the generators $\alpha_k$ and the product $b_l\alpha_k$. Finally, for $\alpha_k\alpha_j,\ k\neq j$, we apply $\Delta$ twice to
$b_n\alpha_k\alpha_j$ giving us $[\partial_j,\partial_k](b_n)$. So
$\Delta^2=0$ says the $\partial_k$ commute:
$\partial_k(\gamma_{jn})=\partial_j(\gamma_{kn})$.
Since $\Phi(\alpha_k)\Phi(\alpha_j)=\sum_{m,n}c_{km}c_{jn}\,d_md_n$,
the formula for $\Delta_X(P\,d_md_n)$ gives
\[
\Delta_X\big(\Phi(\alpha_k)\Phi(\alpha_j)\big)
=\sum_{m,n}\frac{\partial(c_{km}c_{jn})}{\partial u_m}\,d_n
-\sum_{m,n}\frac{\partial(c_{km}c_{jn})}{\partial u_n}\,d_m .
\]
We expand the two sums by the Leibniz rule. In the first sum,
\begin{align*}
\sum_{m,n}\frac{\partial c_{km}}{\partial u_m}\,c_{jn}\,d_n
&=\Big(\sum_m\frac{\partial c_{km}}{\partial u_m}\Big)\Phi(\alpha_j)=0,\\
\sum_{m,n}c_{km}\frac{\partial c_{jn}}{\partial u_m}\,d_n
&=\sum_n\big(\partial_k\gamma_{jn}\big)(u)\,d_n .
\end{align*}
The first equality is $\sum_m\partial c_{km}/\partial u_m=0$. The
second is the chain rule
$\partial_k f=\sum_m\partial_k(b_m)\,\partial f/\partial b_m$ for
$f=\gamma_{jn}$, with $\Phi(\partial_k b_m)=c_{km}$. The second sum
equals $\sum_m\partial_j(\gamma_{km})(u)\,d_m$ in the same way.
Hence
\[
\Delta_X\big(\Phi(\alpha_k)\Phi(\alpha_j)\big)
=\sum_n\big(\partial_k(\gamma_{jn})-\partial_j(\gamma_{kn})\big)(u)\,d_n ,
\]
and each coefficient is zero because the $\partial_k$ commute.
Hence $\Delta'(\alpha_k\alpha_j)=0=\Delta(\alpha_k\alpha_j)$.

The two operators agree on all the generators and their products, so
$\Delta'=\Delta$, and $\Phi$ is an isomorphism of BV algebras.
\end{proof}

\begin{example}[The symplectic group]\label{ex:spshear}
For the presentation of Theorem~\ref{thmC}, $\partial_k$ is a
derivation with $\partial_k(z_{2l-1})=\delta_{kl}+2z_{2(l-k)}$
(Proposition~\ref{prop:partial}), and $z_{2(l-k)}$ is a polynomial
of degree $4(l-k)=|z_{2l-1}|-|z_{2k-1}|$, zero unless $l>k$. So
Proposition~\ref{thm:shear} applies with $b_k=z_{2k-1}$,
$\alpha_k=\alpha_{4k-1}$ and $\gamma_{kl}=2z_{2(l-k)}$, and gives
an isomorphism of Batalin-Vilkovisky algebras
\[
\Phi\colon\HH_*(L\Sp(n);\Z)\longrightarrow
\HH_*\big(L(S^3\times S^7\times\cdots\times S^{4n-1});\Z\big),
\]
\[
\Phi(z_{2k-1})=u_{4k-2},
\qquad
\Phi(\alpha_{4k-1})=d_{4k-1}+2\sum_{l=k+1}^{n}z_{2(l-k)}(u)\,d_{4l-1},
\]
with $z_{2(l-k)}(u)$ the polynomial $z_{2(l-k)}$ of
Theorem~\ref{thm:kk} evaluated at $z_{2i-1}=u_{4i-2}$, onto the
pure tensor of Proposition~\ref{prop:spheres}. For $n=2$,
$\Phi(\alpha_3)=d_3+2u_2^2\,d_7$ and $\Phi(\alpha_7)=d_7$. So the
mixing term of Theorem~\ref{thmC} is removed over $\Z$, without
the localization of Remark~\ref{rem:degen}.
\end{example}

\subsection{Rigidity}\label{sub:rigid}

Write $G\cong T^k\times K$ with $K$ simply connected
(Theorem~\ref{thm:borel32}). The torus factor splits off from the loop homology of $G$ by Lemma~\ref{lem:product}, and for $K$ the argument of
Example~\ref{ex:spshear} applies verbatim: the mixing terms are
triangular (Section~\ref{sub:mixing}), so the shear of
Proposition~\ref{thm:shear} carries $\HH_*(LK;R)$ onto
$\HH_*(LX_K;R)$.

\begin{theorem}[BV rigidity for exterior pairs]\label{thm:rigid}
Let $(G,R)$ be an exterior pair with $\pi_1(G)$ torsion-free,
that is, with $G_{ss}$ simply connected. Then there is an
isomorphism of Batalin-Vilkovisky algebras
\[
\HH_*(LG;R)\;\cong\;\HH_*(LX_G;R).
\]
\end{theorem}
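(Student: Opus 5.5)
The plan is to split off the torus, handle the simply connected factor $K$ by the shear of Proposition~\ref{thm:shear}, and recombine with Lemma~\ref{lem:product}. Since $\pi_1(G)$ is torsion-free, $\Gamma=1$ and $G\cong T^k\times K$ with $K$ simply connected (Theorem~\ref{thm:borel32}), and $(K,R)$ is exterior by Corollary~\ref{cor:Kexterior}. Write $m_1,\dots,m_r$ for the exponents of $K$. The exponents of $G$ are then $k$ zeros together with the $m_i$, so $X_G=T^k\times X_K$. Because $LT^k\simeq\Z^k\times T^k$ has free homology, Lemma~\ref{lem:product} gives
\[
\HH_*(LG;R)\cong\HH_*(LT^k;R)\otimes\HH_*(LK;R)
\]
as BV algebras. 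Similarly $\HH_*(LX_G;R)\cong\HH_*(LT^k;R)\otimes\HH_*(LX_K;R)$, here using that $H_*(LX_K;R)$ is free. It therefore suffices to prove $\HH_*(LK;R)\cong\HH_*(LX_K;R)$ as BV algebras and tensor the result with the identity on the torus factor.

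For $K$ I would put Hepworth's presentation into the form required by Proposition~\ref{thm:shear}. Proposition~\ref{prop:frame} gives $H_*(\loops K;R)=R[y_1,\dots,y_r]$. Lemma~\ref{lem:edge} gives $\sigma(y_k)=a_k$, so the $a_k$ form a basis of the odd primitives with dual classes $x_k$. Lemma~\ref{lem:intersection} gives $\HH_*(K;R)=\Lambda_R(\alpha_1,\dots,\alpha_r)$ with $\delta_k=\partial/\partial\alpha_k$, and $|\alpha_k|=-|a_k|=-|y_k|-1$. Theorem~\ref{thm:hepworth1}, Proposition~\ref{prop:hepworth3} and Proposition~\ref{prop:reduce} then yield
\[
\HH_*(LK;R)\cong R[y]\otimes\Lambda_R(\alpha),\qquad \Delta=\sum_k\partial_k\otimes\partial/\partial\alpha_k,\qquad \partial_k(y_l)=\delta_{kl}+\gamma_{kl}.
\]
These formulas require the exterior-pair hypothesis, so that $H_*(K;R)$ is a Hopf algebra with free odd primitives. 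The $y_k$ have positive even degree because every $|a_k|\ge3$.

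The remaining input for Proposition~\ref{thm:shear} is triangularity: $\gamma_{kl}$ must be a polynomial of degree $|y_l|-|y_k|$ that vanishes unless $|y_l|>|y_k|$. This is the step I expect to need the most care, because exponents may repeat (for example, $\mathrm{Spin}(4m)$ over $\Z[1/2]$ has two generators of the same degree). The argument runs as follows. Since $\sigma$ vanishes in degree zero on the reduced part ($H_0(\loops K)=R$ and $\sigma(1)=0$), write
\[
\psi(y_l)=y_l\otimes1+1\otimes y_l+\sum y'\otimes y'',
\]
where the sum runs over terms with both factors of positive degree. Only the first-tensor factor of each term is paired with $x_k$. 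The term $y_l\otimes1$ contributes $\langle x_k,a_l\rangle=\delta_{kl}$. The term $1\otimes y_l$ contributes $\sigma(1)=0$. Each remaining term contributes an element of degree $|y_l|-|y'|-\ldots$, and this is nonzero only when $|y'|+1=|a_k|$, that is, $|y'|=|y_k|$, so that $|y''|=|y_l|-|y_k|>0$. Hence $\gamma_{kl}$ is homogeneous of degree $|y_l|-|y_k|$, and it vanishes unless $|y_l|>|y_k|$, as observed after Definition~\ref{def:mixing}. The proof of Proposition~\ref{thm:shear} only uses the fact that $\gamma_{km}$ does not involve $u_m$, and that fact follows from the degree bound in every case, with or without repeated degrees. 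Applying Proposition~\ref{thm:shear} gives a BV isomorphism onto $R[u]\otimes\Lambda_R(d)$ with the pure-tensor operator. Proposition~\ref{prop:spheres}, together with the remark after it on changing coefficients, identifies that pure tensor with $\HH_*(LX_K;R)$.

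Finally, I would combine this with the torus factor. On the $K$-side the isomorphism is $\Phi$, and on $\HH_*(LT^k;R)$ it is the identity. Since the BV operators on both sides are of the form $\Delta_{T}\otimes1+1\otimes\Delta$ (Lemma~\ref{lem:product}), the tensor product $\id\otimes\Phi$ is again an isomorphism of BV algebras, which proves the theorem.
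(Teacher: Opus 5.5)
Your proposal is correct and follows the paper's proof essentially step for step. You split off $\HH_*(LT^k;R)$ by Lemma~\ref{lem:product}, put $\HH_*(LK;R)$ into Hepworth's form via Proposition~\ref{prop:frame}, Lemma~\ref{lem:edge}, Lemma~\ref{lem:intersection} and Proposition~\ref{prop:reduce}, check triangularity of the mixing terms by the same degree argument as in Section~\ref{sub:mixing}, and apply the shear of Proposition~\ref{thm:shear}. One small correction to your aside: the shear does not use only the fact that $\gamma_{km}$ omits $u_m$, since bijectivity of $\Phi$ also needs the (block) unitriangularity of $(\delta_{km}+\gamma_{km})$. You have verified exactly that hypothesis, though, so nothing is missing.
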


\begin{proof}
By Theorem~\ref{thm:borel32}, $G\cong T^k\times K$ with $K=G_{ss}$
simply connected and $(K,R)$ exterior
(Corollary~\ref{cor:Kexterior}), and $X_G=T^k\times X_K$. By
Lemma~\ref{lem:product} both sides have $\HH_*(LT^k;R)$ tensored
with the $K$-side, with the same BV operator on the torus factor,
so it suffices to treat $K$.

Let $H_*(K;R)=\Lambda_R(a_1,\dots,a_r)$ with $|a_k|=2m_k+1$,
$m_k\geq1$. Then $H_*(\loops K;R)=R[y_1,\dots,y_r]$ with
$|y_k|=2m_k$ and $\sigma(y_k)=a_k$ (Proposition~\ref{prop:frame},
Lemma~\ref{lem:edge}), $\HH_*(K;R)=\Lambda_R(\alpha_1,\dots,\alpha_r)$
with $\delta_k=\partial/\partial\alpha_k$
(Lemma~\ref{lem:intersection}), and
$\Delta_K=\sum_k\partial_k\otimes\partial/\partial\alpha_k$
(Proposition~\ref{prop:reduce}). The mixing terms are
triangular (Section~\ref{sub:mixing}), so
Proposition~\ref{thm:shear} applies, and
its target, with $|u_k|=2m_k$ and $|d_k|=-(2m_k+1)$, is
$\HH_*(LX_K;R)$ (Proposition~\ref{prop:spheres}).
\end{proof}

\begin{corollary}[Classification]\label{cor:classify}
Let $R$ be an integral domain and $(G,R)$, $(G',R)$ exterior
pairs with $\pi_1(G)$ and $\pi_1(G')$ torsion-free. The
following are equivalent:
\begin{enumerate}
\item $\HH_*(LG;R)\cong\HH_*(LG';R)$ as Batalin-Vilkovisky
algebras;
\item $\HH_*(LG;R)\cong\HH_*(LG';R)$ as graded algebras;
\item $G$ and $G'$ have the same multiset of $R$-exponents;
\item $G$ and $G'$ are rationally homotopy equivalent.
\end{enumerate}
\end{corollary}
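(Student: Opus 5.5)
We prove the cycle of implications (3)$\Rightarrow$(1)$\Rightarrow$(2)$\Rightarrow$(3), and then (3)$\Leftrightarrow$(4) separately. For (3)$\Rightarrow$(1), equal multisets of exponents give $X_G=X_{G'}$. Theorem~\ref{thm:rigid} then gives $\HH_*(LG;R)\cong\HH_*(LX_G;R)=\HH_*(LX_{G'};R)\cong\HH_*(LG';R)$ as BV algebras. The implication (1)$\Rightarrow$(2) is immediate.

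For (2)$\Rightarrow$(3), we first pass through Theorem~\ref{thm:rigid} and Proposition~\ref{prop:spheres}, which identify the graded algebra $\HH_*(LG;R)$ with
\[
A\;=\;R[\Z^k]\otimes R[u_{2m_i}:m_i\geq1]\otimes\Lambda_R(d_{2m_i+1}:1\le i\le r).
\]
Here $k$ is the number of zero exponents, and we use $\HH_*(LS^1;R)=R[t,t^{-1}]\otimes\Lambda_R(d_1)$. So it suffices to recover the multiset $\{m_i\}$ from $A$ as a graded ring. We use two invariants.
\begin{enumerate}
\item The total number $r$ is the number of odd generators. It can be read off as the largest $j$ such that $A_{\mathrm{odd}}^{\,j}\neq0$ in the negatively graded part. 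More concretely, $d_1\cdots d_r$ spans the lowest nonzero degree $-\dim G$ over $A_0$. Since $R$ is a domain, $A$ has no zero divisors beyond nilpotents.
\item We take the quotient of $A$ by its nilradical $N$, which is the ideal generated by the odd elements. Because $R$ is a domain, $A/N\cong R[\Z^k]\otimes R[u]$, a graded integral domain.
\end{enumerate}
From $A/N$ we read off the degree-zero part $R[\Z^k]$. Its group of units, modulo $R^\times$, is $\Z^k$ (Laurent monomials are the only units over a domain), and this recovers $k$. The degrees of the polynomial generators of the positively graded part are recovered from the module of indecomposables $Q=\bar B/\bar B^2$, where $B=(A/N)/(\text{augmentation of }R[\Z^k])\cong R[u]$. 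The $R$-rank of $Q$ in degree $2m$ counts the exponents equal to $m\ge1$. This rank is well defined since $Q_{2m}$ is free over the domain $R$; alternatively one can tensor with the fraction field.

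The main obstacle is making this intrinsic to the graded algebra without choosing a presentation. The step to watch is that the augmentation of $R[\Z^k]$ is not canonical. To handle it, we instead take $B=(A/N)\otimes_{A_0}F$ for a residue map $A_0\to F$ to the fraction field of $R$, obtained by sending the units of $\Z^k$ to $1$; any ring map $A_0\to F$ works. This gives $B\cong F[u]$ with positive grading, and its indecomposables have dimension $\#\{i:m_i=m\}$ in degree $2m$. These are isomorphism invariants, so (2) implies equal multisets.

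Finally, for (3)$\Leftrightarrow$(4), the rational homotopy type of $G$ is that of $X_G$ (Section~\ref{sub:exterior}), and the exponents are the classical ones (Theorem~\ref{thm:whichpairs}). Hence equal exponents give $G\simeq_\Q X_G=X_{G'}\simeq_\Q G'$. Conversely, a rational equivalence preserves $H_*(-;\Q)=\Lambda_\Q(a_i)$ and hence the degrees of its primitive generators, which are the exponents.
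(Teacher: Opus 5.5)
Your proof is correct and follows the paper's argument. You prove (3)$\Rightarrow$(1) by Theorem~\ref{thm:rigid} applied to the common $X_G=X_{G'}$, (2)$\Rightarrow$(3) by passing to $A/N$, with $N$ the nilradical generated by the odd classes, and reading $k$ and the positive exponents off that reduced quotient, and (3)$\Leftrightarrow$(4) through the classical exponents and $X_G\simeq_\Q G$. The differences are minor: you detect $k$ as the rank of the unit group of $(A/N)_0$ modulo $R^\times$ rather than as the Krull dimension of $(A/N)_0\otimes_RF$; you base-change along a ring map $(A/N)_0\to F$, whereas the paper takes indecomposables of $A/N$ relative to $(A/N)_0$, which already avoids the non-canonical augmentation; and your first invariant (recovering $r$) is never used.
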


\begin{proof}
(3)$\Rightarrow$(1): both algebras are BV-isomorphic to
$\HH_*(LX;R)$ for the common $X=X_G=X_{G'}$
(Theorem~\ref{thm:rigid}). (1)$\Rightarrow$(2) is trivial.

(2)$\Rightarrow$(3). It is enough to recover the exponents of
$G$ from the graded algebra $A=\HH_*(LG;R)$. Let $k$ be the
multiplicity of the exponent $0$ and $\{m_1,\dots,m_r\}$ the
positive exponents. By Theorem~\ref{thm:rigid},
Lemma~\ref{lem:product} and Proposition~\ref{prop:spheres},
\[
A\;\cong\;R[t_1^{\pm1},\dots,t_k^{\pm1}]\otimes
\Lambda_R(\theta_1,\dots,\theta_k)\otimes
R[b_1,\dots,b_r]\otimes\Lambda_R(\alpha_1,\dots,\alpha_r),
\]
the first two factors being $\HH_*(LT^k;R)$, with
\[
|t_i|=0,\qquad|\theta_i|=-1,\qquad
|b_j|=2m_j,\qquad|\alpha_j|=-(2m_j+1).
\]
The odd generators $\theta_i$, $\alpha_j$ anticommute and
square to zero, so the ideal $N$ they generate is nilpotent.
The quotient $A/N=R[t_1^{\pm1},\dots,t_k^{\pm1}][b_1,\dots,b_r]$
is an integral domain, $R$ being one. Hence $N$ is the
nilradical of $A$, and $\bar A:=A/N$ depends only on the ring
$A$. The degree-zero part of $\bar A$ is the
Laurent polynomial ring $R[t_1^{\pm1},\dots,t_k^{\pm1}]$, whose
base change to the fraction field $F$ of $R$ is
$F[t_1^{\pm1},\dots,t_k^{\pm1}]$, of Krull dimension $k$; this
gives $k$. In positive degrees
$\bar A$ is a polynomial ring over its degree-zero part, whose
indecomposables are free with basis in degrees
$2m_1,\dots,2m_r$; this gives the positive exponents.

(3)$\Leftrightarrow$(4): the $R$-exponents are the classical
exponents (Theorem~\ref{thm:whichpairs}), and the map
$X_G\to G$ of Section~\ref{sub:exterior} is a rational
homotopy equivalence, so the exponents and the rational
homotopy type determine one another.
\end{proof}

\begin{remark}[Not a map of spaces]\label{rem:sharp}
Consider an exterior pair $(K,\Z)$ with $K$ simply connected,
and suppose there is a based map $f$ between $X_K$ and $K$, in
either direction, such that $Lf_*$ is an isomorphism on integral
homology. As the constant loops $c$ split the evaluation, we have
$\mathrm{ev}\circ c=\mathrm{id}$, and $Lf\circ c=c\circ f$, so
likewise $\mathrm{ev}\circ Lf=f\circ\mathrm{ev}$. Therefore $f_*$
is a retract of $Lf_*$, which means it is an isomorphism, and
thus $f$ is a homotopy equivalence of two simply connected finite
complexes. Thus $\loops f$ is an equivalence of $H$-spaces, and
$H_*(\loops X_K)\cong H_*(\loops K)$ as Hopf algebras. However,
$H_*(\loops X_K)$ is primitively generated, while $H_*(\loops K)$
is not: for $\Sp(n)$, $n\geq2$, its primitive in degree $6$ is
$3z_3-2z_1^3$ (Remark~\ref{rem:degen}), and for $\SU(n)$,
$n\geq3$, its primitive in degree $4$ is $2e_4-e_2^2$, by
\cite[Theorem~2-1]{Tamanoi}. Furthermore, already
$\pi_6(S^3\times S^7)$ and $\pi_6(S^3\times S^5)$ contain
$\pi_6(S^3)$, of order $12$ \cite[Chapitre~IV, Proposition~10]{Serre53},
whereas $\pi_6(\Sp(2))=0$ and $\pi_6(\SU(3))$ has order $6$
\cite[Chapter~IV, Table~4.2]{MimuraToda}. Hence for these groups
the isomorphism of Theorem~\ref{thm:rigid} cannot be given by a
map of spaces.
\end{remark}

\begin{remark}[Rational coefficients]\label{rem:rational}
Over $\Q$, the isomorphism of Theorem~\ref{thm:rigid} is given by
formality. Let $G\cong T^k\times K$, where $K$ is simply
connected. Both $K$ and $X_K$ are rationally formal, since their
cohomology is an exterior algebra on odd-dimensional generators,
and F\'elix and Thomas construct the rational BV algebra of the
free loop space of a simply connected closed manifold using a
Poincar\'e duality model of its cochain complex
\cite[Theorem~1, Propositions~1 and~2]{FelixThomas}. The common
cohomology of $K$ and $X_K$, together with its pairing,
constitutes such a model for each.
\end{remark}

\subsection{The Hochschild cohomology model}\label{sub:hochmodel}

Let $A$ be a graded commutative algebra, free of finite type
over $R$, that admits a linear form $\int\colon A\to R$ for which
$\Theta\colon A\to A^\vee$, $\Theta(a)(b)=\int ab$, is an
isomorphism. Provide $A^\vee$ with the graded bimodule structure
defined as follows:
\[
(\lambda c)(b)=\lambda(cb)
\qquad\text{and}\qquad
(c\lambda)(b)=(-1)^{|c|(|\lambda|+|b|)}\lambda(bc).
\]
Then $\Theta(ac)=\Theta(a)c$, and
\[
\Theta(ca)(b)=\int cab=(-1)^{|c|(|a|+|b|)}\int abc=(c\,\Theta(a))(b)
\]
by graded commutativity, so $\Theta$ is an isomorphism of
$A$-bimodules. The Hochschild cohomology $HH^*(A;A)$ with the cup
product and the BV operator $\Theta_*^{-1}B^\vee\Theta_*$, where
$B$ is Connes' operator on Hochschild chains, is a
Batalin-Vilkovisky algebra \cite{TradlerBV},
\cite[Section~5]{Menichi}.

\begin{theorem}[Hochschild cohomology model]\label{thm:hochmodel}
Let $(G,R)$ be an exterior pair with $G$ simply connected. Then
\[
\HH_*(LG;R)\;\cong\;HH^*\big(H^*(G;R);H^*(G;R)\big)
\]
as Batalin-Vilkovisky algebras, the right-hand side carrying the
cup product and the BV operator induced by Connes' operator through
Poincar\'e duality $H^*(G;R)\cong H^*(G;R)^\vee$.
\end{theorem}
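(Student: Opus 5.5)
The plan is to compute the right-hand side directly for the exterior algebra $A=H^*(G;R)=\Lambda_R(x_1,\dots,x_r)$ and match it with the pure tensor of Proposition~\ref{prop:spheres}. Then we compose with Theorem~\ref{thm:rigid}, which for simply connected $G$ gives $\HH_*(LG;R)\cong\HH_*(LX_G;R)\cong R[u_1,\dots,u_r]\otimes\Lambda_R(d_1,\dots,d_r)$ with $\Delta_X=\sum_k\partial/\partial u_k\otimes\partial/\partial d_k$. So the whole proof reduces to showing that $HH^*(A;A)$, with cup product and $\Theta^{-1}B^\vee\Theta$, is isomorphic to this pure tensor. Note that $A$ depends only on the degrees $|x_k|=2m_k+1$. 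It is therefore also the cohomology of $X_G$, and everything is a statement about $X_G$-type data.

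\textbf{Step 1: the Gerstenhaber algebra.} Since $A=\bigotimes_k\Lambda_R(x_k)$ is a tensor product of algebras, each free over $R$, the Koszul resolution of $A$ as an $A^e$-module is the tensor product of the small resolutions for each $\Lambda_R(x_k)$. Those resolutions are free of finite type, so a Künneth argument gives
\[
HH^*(A;A)\;\cong\;\bigotimes_k HH^*\big(\Lambda_R(x_k);\Lambda_R(x_k)\big)
\]
as algebras. For a single odd exterior generator, the Koszul complex computes $HH^*(\Lambda(x);\Lambda(x))\cong\Lambda_R(x)\otimes R[u]$. Here $u$ is dual to the periodicity class, the differential vanishes because $x$ is odd, and the grading places $u$ in degree $|x|-1$ up to the regrading convention. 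This matches $\Lambda_R(d)\otimes R[u]$ with $d\leftrightarrow x$. Everything is free over $R$, so no Tor terms intervene.

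\textbf{Step 2: the BV operator.} Here I would use Menichi's computation \cite[Section~5]{Menichi}. For a single odd sphere he identifies $HH^*(H^*(S^{2m+1});H^*(S^{2m+1}))$, with the Poincaré-duality BV operator, with $\HH_*(LS^{2m+1};\Z)$, namely $\Delta=\partial/\partial u\otimes\partial/\partial d$. This works over $\Z$ and hence over any $R$ by base change, since everything is free.

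Passing to the tensor product requires a product formula on the Hochschild side, analogous to Lemma~\ref{lem:product}. The argument runs as follows. Connes' $B$ on $C_*(A\otimes A')$ corresponds under the shuffle (Eilenberg–Zilber) map to $B\otimes1+1\otimes B$ up to homotopy; this is the standard fact that the shuffle map is a map of mixed complexes. The form $\int=\int\otimes\int'$ gives $\Theta_{A\otimes A'}=\Theta_A\otimes\Theta_{A'}$. Transporting through these identifications yields $\Delta_{A\otimes A'}=\Delta_A\otimes1+1\otimes\Delta_{A'}$, together with compatibility with the cup product.

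Chaining, $HH^*(A;A)$ is the pure tensor, which by Proposition~\ref{prop:spheres} is $\HH_*(LX_G;R)$. Composing with Theorem~\ref{thm:rigid} proves Theorem~\ref{thm:hochmodel}.

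\textbf{Main obstacle.} The delicate step is the product formula for the BV operator on Hochschild cohomology with duality coefficients over a general ring $R$. That $B$ commutes with the shuffle map only up to homotopy is the classical mixed-complex statement, but one must check that the Künneth isomorphism for $\mathrm{Ext}_{A^e}(A,A^\vee)$ is compatible with the dual of the shuffle map, and that the signs in the bimodule structure on $A^\vee$ agree. I would first try to avoid this entirely by doing the computation in one go with the Koszul resolution of $A$ itself. That resolution is $A\otimes\Gamma_R[s x_1,\dots,s x_r]^\vee\otimes A$; dually one gets polynomials, and because the $x_k$ are odd, the divided powers dualize to an honest polynomial ring over $R$. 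One then writes $B$ explicitly on the dual Koszul complex. Connes' operator is there the de Rham-type operator sending $x_k\mapsto s x_k$, and after dualizing through $\Theta$ it should visibly become $\sum_k\partial/\partial u_k\otimes\partial/\partial d_k$, with the duality $\Theta$ converting multiplication by $x_k$ into $\partial/\partial d_k$ exactly as in Lemma~\ref{lem:intersection}.
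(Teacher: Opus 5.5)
Your proposal is correct and follows essentially the paper's proof. You reduce to $X_G$ via Theorem~\ref{thm:rigid}, treat a single odd sphere by Menichi's computation, and pass to products by a K\"unneth theorem for the Hochschild BV structure; the paper cites that K\"unneth theorem (\cite{DuarteAngel}, adapted with Koszul signs), whereas you sketch it through the shuffle map as a map of mixed complexes, which is the right argument and correctly identified as the step needing care. One citation detail: the paper treats Menichi's BV computation of $HH^*$ as stated over $\F_2$ and extracts the integral odd-sphere case from his chain-level Example~12, which is valid over any ring and whose signs kill the differential in odd degree, so you should argue the same way rather than quote an integral statement.
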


\begin{proof}
By Theorem~\ref{thm:rigid}, $\HH_*(LG;R)\cong\HH_*(LX_G;R)$ as
BV algebras, and $H^*(G;R)\cong H^*(X_G;R)$ as graded algebras;
an algebra isomorphism changes $\Theta$ by a unit, which
cancels in $\Theta_*^{-1}B^\vee\Theta_*$. So it suffices to
treat $X_G$.

For one sphere, $A=\Lambda_R(x)$ with $|x|=n$ odd, Menichi
computes $HH^*(A;A)$ with its BV operator over $\F_2$
\cite[Proposition~20]{Menichi} from Connes' operator on the
Hochschild chains \cite[Example~12]{Menichi}; the example is
stated over any commutative ring, and for $n$ odd its signs make
the differential vanish and the BV operator integral, so
\[
HH^*(A;A)=\Lambda_R(g)\otimes R[f],\qquad
\Delta(gf^k)=k\,f^{k-1},\qquad \Delta(f^k)=0 ,
\]
which is $\HH_*(LS^n;R)$ \cite[Theorem~16]{Menichi}.

For products, the K\"unneth map
$HH^*(A;A)\otimes HH^*(B;B)\to HH^*(A\otimes B;A\otimes B)$ of
symmetric algebras is a map of BV algebras, an isomorphism when
one factor has projective Hochschild cohomology
\cite[Theorems~3.5 and~3.8, Corollary~3.9]{DuarteAngel}. Their
algebras are ungraded and their ring hereditary, but the
argument applies with Koszul signs to $\Lambda_R(x)$ over any
$R$. With Lemma~\ref{lem:product} on the loop side,
$\HH_*(LX_G;R)\cong HH^*\big(H^*(X_G;R);H^*(X_G;R)\big)$ as BV
algebras.
\end{proof}

\noindent The isomorphism of Theorem~\ref{thm:hochmodel} is
abstract. We suggest that there is a natural choice for such an
isomorphism. The
natural candidate requires a duality of the cochains, a homotopy
version of $\Theta$. To describe it, let $M$ be a closed oriented
manifold, simply connected, and $C=C^*(M;R)$. As mentioned in the
introduction, Jones' isomorphism
\[
J\colon H_*(LM;R)\longrightarrow HH^*(C;C^\vee)
\]
\cite{Jones} sends $c_*[M]$, the image of the fundamental class
under the inclusion of the constant loops $c\colon M\to LM$, to a
class $\theta:=J(c_*[M])\in HH^*(C;C^\vee)$. This class is a
morphism $\theta\colon C\to C^\vee$ in the derived category of
$C$-bimodules, and its restriction along the unit is Poincar\'e
duality. Indeed, restriction along the unit $R\to C$ is the map
$HH^*(C;C^\vee)\to H^*(C^\vee)=H_*(M;R)$, which under $J$ is the
evaluation $\mathrm{ev}_*\colon H_*(LM;R)\to H_*(M;R)$ at the
basepoint of the circle. Since $\mathrm{ev}\circ c=\mathrm{id}$,
the image of $\theta$ is $[M]$, so on cohomology $\theta$ is the
bimodule map $a\mapsto\langle a\,\cdot\,,[M]\rangle$, which is
$\Theta$. Over a field, Menichi \cite[Theorem~22]{MenichiBV} has
shown that the cap product with $\theta$ is an isomorphism
\[
HH^*(C;C)\cong HH^*(C;C^\vee),
\]
and that Connes' operator transported through it makes
$HH^*(C;C)$ a Batalin-Vilkovisky algebra. Tradler
\cite{TradlerBV} reaches the same result using an
$\infty$-inner product on $C$.

Whether the Cohen-Jones ring isomorphism
\[
\HH_*(LM;R)\cong HH^*(C;C)
\]
\cite[Theorem~3]{CohenJones} is an isomorphism of
Batalin-Vilkovisky algebras for this structure is a theorem over
a field of characteristic zero \cite{FelixThomas} and a conjecture in
general \cite{TradlerBV}, \cite[Conjecture~4]{MenichiBV}.

\begin{conjecture}[Formality with duality]\label{conj:pdformal}
Let $(G,R)$ be an exterior pair with $G$ simply connected,
$\Theta$ the Poincar\'e duality isomorphism of $H^*(G;R)$ and
$\theta$ the class of the constant loops for $C^*(G;R)$. There
is an $A_\infty$ quasi-isomorphism
$f\colon H^*(G;R)\to C^*(G;R)$ carrying $\Theta$ to $\theta$.
\end{conjecture}

\noindent Such an $f$ induces an isomorphism
$f^\#\colon HH^*(C^*(G;R);C^*(G;R))\to HH^*(H^*(G;R);H^*(G;R))$
of Gerstenhaber algebras, and since $f$ carries $\theta$ to
$\Theta$ it intertwines the two transported Connes operators:
$f^\#$ is an isomorphism of Batalin-Vilkovisky algebras.
If the Cohen-Jones isomorphism is one of Batalin-Vilkovisky
algebras, the composite
\[
\HH_*(LG;R)\;\cong\;HH^*\big(C^*(G;R);C^*(G;R)\big)
\;\xrightarrow{\ f^\#\ }\;HH^*\big(H^*(G;R);H^*(G;R)\big)
\]
is an isomorphism of Batalin-Vilkovisky algebras realizing
Theorem~\ref{thm:hochmodel}. The theorem says that the target
is the right BV algebra; the conjecture asks that the natural
map reach it.

The conjecture is true over $\Q$: $G$ is formal, and by
Remark~\ref{rem:rational} its cohomology $H^*(G;\Q)$, with its
pairing, is a Poincar\'e duality model of $C^*(G;\Q)$ in the
sense of F\'elix and Thomas \cite{FelixThomas}, for which the map
$\theta$ is strict. Thus we can find a map of algebras $f$ that
carries $\Theta$ to $\theta$. However,
formality alone is not enough. For example, the sphere $S^2$ is
formal over $\F_2$, but the duality that $H^*(S^2;\F_2)$ inherits
from the cochains has nonzero higher terms \cite{PoirierTradler}.
Therefore, with the strict $\Theta$ above, the Hochschild BV
algebra differs from $\HH_*(LS^2;\F_2)$
\cite[Corollary~30]{Menichi}. What the conjecture requires is a
formality map which at the same time is a map of the dualities.
Note that $(S^2,\F_2)$ is not an exterior pair: $S^2$ is not a
Lie group, and the generator of $H^*(S^2;\F_2)$ has even degree.
So $(S^2,\F_2)$ is not a counterexample.

\section{Torsion in the fundamental group}\label{sub:torsion}

Let $(G,R)$ be an exterior pair, and write
$G\cong T^k\times K/\Gamma$ with $\Gamma$ the torsion subgroup of
$\pi_1(G)$ (Section~\ref{sub:census}). The loop side splits,
$H_*(\loops(K/\Gamma);R)\cong R[\Gamma]\otimes H_*(\loops K;R)$
(Proposition~\ref{prop:loopsplit}). In this section we show that
the BV algebra splits the same way (Theorem~\ref{thm:torsionss}),
which extends Theorem~\ref{thm:rigid} to
$\HH_*(LG;R)\cong\HH_*(LX_G;R)\otimes R[\Gamma]$
(Theorem~\ref{thm:rigidtorsion}). We then show that over an integral domain
the graded algebra $\HH_*(LG;R)$ already determines the exponents
and $R[\Gamma]$, hence the BV algebra
(Corollary~\ref{cor:torsionclass}), and over a subring of $\Q$
also $\pi_1(G)$ (Corollary~\ref{cor:rationalpi1}).

The intersection ring of $G_{ss}$ is that of $K$
(Lemma~\ref{lem:intquot}), and so is the homology suspension on
each component of $\loops G_{ss}$ (Lemma~\ref{lem:suspcomp});
with Proposition~\ref{prop:loopsplit} this gives:

\begin{theorem}[The semisimple part]\label{thm:torsionss}
Let $K$ be a compact simply connected Lie group with $(K,R)$
exterior, $\Gamma\subset K$ a finite central subgroup with
$|\Gamma|$ a unit in $R$, and $G_{ss}=K/\Gamma$. Then
\[
\Xi:=\Phi_*\otimes\varphi\colon
\HH_*(LK;R)\otimes R[\Gamma]\;\longrightarrow\;
\HH_*(LG_{ss};R),
\qquad
(b\otimes x)\otimes g\;\longmapsto\;(r_g)_*b\otimes\varphi(x),
\]
is an isomorphism of BV algebras, with $R[\Gamma]$ in degree zero
and zero BV operator.
\end{theorem}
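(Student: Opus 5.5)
We work throughout in Hepworth's presentation (Theorem~\ref{thm:hepworth1}) on both sides. This writes $\HH_*(LK;R)\cong H_*(\loops K;R)\otimes\HH_*(K;R)$ and $\HH_*(LG_{ss};R)\cong H_*(\loops G_{ss};R)\otimes\HH_*(G_{ss};R)$. Here $\Xi$ is the tensor product of two maps. The first is the isomorphism $\Phi_*\colon R[\Gamma]\otimes H_*(\loops K;R)\to H_*(\loops G_{ss};R)$ of Proposition~\ref{prop:loopsplit}, $g\otimes b\mapsto(r_g)_*b$. The second is $\varphi=|\Gamma|^{-1}p_*$ of Lemma~\ref{lem:intquot}. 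Both hypotheses of that lemma hold: $|\Gamma|$ is a unit, and $H_*(K;R)$ is free, being exterior.

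\emph{Bijectivity and multiplicativity.} Bijectivity is immediate, since each factor is a bijection. For multiplicativity, we use that the loop product is the tensor product of the Pontryagin ring and the intersection ring. Then $\Xi$ is a ring map because $\Phi_*$ is multiplicative, with $R[\Gamma]$ commuting past the even degree-zero factor, and $\varphi$ is a ring isomorphism. Units also match: $1\otimes[K]\otimes 1$ goes to $[c_1]\otimes[G_{ss}]$, since $c_1$ is constant and $\varphi[K]=[G_{ss}]$.

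\emph{The BV operator.} We must show that $\Delta_{G_{ss}}\circ\Xi=\Xi\circ(\Delta_K\otimes1)$. For this we use the formula $\Delta(a\otimes x)=\sum a_{(1)}\otimes\sigma(a_{(2)})x$ and compute on an element $(r_g)_*b\otimes\varphi(x)$.
\begin{enumerate}
\item Since $\Phi_*$ is a Hopf algebra map and $g$ is grouplike in $R[\Gamma]$, the coproduct is $\psi((r_g)_*b)=\sum(r_g)_*b_{(1)}\otimes(r_1)_*b_{(2)}$. One may equally put $g$ on the second factor, by cocommutativity, as the paper notes.
\item By Lemma~\ref{lem:suspcomp}, $\sigma_{G_{ss}}((r_h)_*b_{(2)})$ equals $p_*\sigma_K(b_{(2)})$ when $|b_{(2)}|>0$. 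It is $0$ when $b_{(2)}$ is a scalar, and this agrees with $\sigma_K(1)=0$.
\item By Lemma~\ref{lem:intquot}, $p_*\sigma_K(b_{(2)})\,\varphi(x)=\varphi(\sigma_K(b_{(2)})x)$.
\end{enumerate}
Hence $\Delta_{G_{ss}}\Xi(b\otimes x\otimes g)=\sum(r_g)_*b_{(1)}\otimes\varphi(\sigma_K(b_{(2)})x)=\Xi(\Delta_K(b\otimes x)\otimes g)$. This is exactly the statement that $R[\Gamma]$ carries zero BV operator. A BV algebra is determined by its product and $\Delta$, so $\Xi$ is an isomorphism of BV algebras.

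The main obstacle is step~1. The map $\Phi$ is only an $H$-map up to homotopy, and we need $\Phi_*$ to respect coproducts and to send $g\otimes b$ to a coproduct with $g$ on exactly one side. Coproducts come from diagonals, so any continuous $\Phi$ preserves them. Using $\Phi(\omega,g)=\Phi(\omega,g)$ together with the diagonal of the discrete set $\Gamma$ gives $\psi((r_g)_*b)=\sum(r_g)_*b_{(1)}\otimes(r_g)_*b_{(2)}$, with $g$ on both sides rather than one. This is still fine for the computation above. In step~2 the suspension of $(r_g)_*b_{(2)}$ is independent of $g$ for positive-degree $b_{(2)}$ and zero in degree zero, so the factor $g$ survives only on the left, as required. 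A secondary point is the sign convention in Hepworth's formula: $\sigma$ sits on the first factor in Definition~\ref{def:hepworth2} but on the second in Theorem~\ref{thm:hepworth1}. This is harmless here because everything on the loop side has even degree.
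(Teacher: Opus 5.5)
Your proof is correct and is essentially the paper's: $\Xi$ is a ring isomorphism factor by factor (Proposition~\ref{prop:loopsplit} and Lemma~\ref{lem:intquot}), and the BV check uses that $\Phi_*$ is a coalgebra map with $g$ group-like, followed by Lemmas~\ref{lem:suspcomp} and~\ref{lem:intquot}. Discard the formula with $(r_1)_*b_{(2)}$ in your step~1, together with the appeal to cocommutativity there: the correct coproduct is the one you reach in your last paragraph, $\psi((r_g)_*b)=\sum(r_g)_*b_{(1)}\otimes(r_g)_*b_{(2)}$, which is exactly what the paper uses.
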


\begin{proof}
By Theorem~\ref{thm:hepworth1} for $K$ and for $G_{ss}$,
$\Xi$ is $\Phi_*$ on the Pontryagin factor and $\varphi$ on
the intersection factor, both ring isomorphisms
(Proposition~\ref{prop:loopsplit}, Lemma~\ref{lem:intquot}),
and $R[\Gamma]$ is in degree zero. So $\Xi$ is a ring
isomorphism.

For the BV operators let $b\in H_*(\loops K;R)$ with
$\psi(b)=\sum b_{(1)}\otimes b_{(2)}$. The isomorphism $\Phi_*$ is
a map of coalgebras and $g$ is group-like, so
$\psi((r_g)_*b)=\sum(r_g)_*b_{(1)}\otimes(r_g)_*b_{(2)}$, and
Theorem~\ref{thm:hepworth1} for $G_{ss}$ gives
\[
\Delta_{G_{ss}}\Xi\big((b\otimes x)\otimes g\big)
=\sum(r_g)_*b_{(1)}\otimes
\sigma_{G_{ss}}\big((r_g)_*b_{(2)}\big)\,\varphi(x).
\]
By Lemma~\ref{lem:suspcomp},
$\sigma_{G_{ss}}((r_g)_*b_{(2)})$ is $0$ for $b_{(2)}=1$ and
$p_*\sigma_K(b_{(2)})$ otherwise, and by
Lemma~\ref{lem:intquot},
$p_*\sigma_K(b_{(2)})\,\varphi(x)=\varphi(\sigma_K(b_{(2)})\,x)$.
Hence
\[
\Delta_{G_{ss}}\Xi\big((b\otimes x)\otimes g\big)
=\sum(r_g)_*b_{(1)}\otimes\varphi\big(\sigma_K(b_{(2)})\,x\big)
=\Xi\big(\Delta_K(b\otimes x)\otimes g\big),
\]
the last equality being Theorem~\ref{thm:hepworth1} for $K$,
with $\sigma_K(1)=0$.
\end{proof}

\begin{theorem}[BV rigidity for all exterior pairs]\label{thm:rigidtorsion}
Let $(G,R)$ be an exterior pair and $\Gamma$ the torsion
subgroup of $\pi_1(G)$. Then there is an isomorphism of BV
algebras
\[
\HH_*(LG;R)\;\cong\;\HH_*(LX_G;R)\otimes R[\Gamma],
\]
with $R[\Gamma]$ the group ring of $\Gamma$, placed in degree
zero with zero BV operator, so that
$\Delta_G=\Delta_{X_G}\otimes1$. In particular
$\HH_*(LG;\Q)\cong\HH_*(LX_G;\Q)\otimes\Q[\Gamma]$ for every
connected compact Lie group $G$.
\end{theorem}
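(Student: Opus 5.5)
The proof assembles Theorem~\ref{thm:torsionss}, Theorem~\ref{thm:rigid} and Lemma~\ref{lem:product}. First I will split off the torus. By Theorem~\ref{thm:borel32}, $G$ is diffeomorphic to $T^k\times G_{ss}$ with $G_{ss}=K/\Gamma$. The homology $H_*(LT^k;R)=R[\Z^k]\otimes\Lambda_R(\theta_1,\dots,\theta_k)$ is free, so Lemma~\ref{lem:product} gives an isomorphism of BV algebras
\[
\HH_*(LG;R)\;\cong\;\HH_*(LT^k;R)\otimes\HH_*(LG_{ss};R),
\qquad
\Delta_G=\Delta_{T^k}\otimes1+1\otimes\Delta_{G_{ss}}.
\]
By Corollary~\ref{cor:Kexterior} and Lemma~\ref{lem:gammaunit}, the pair $(K,R)$ is exterior and $|\Gamma|$ is a unit in $R$. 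So Theorem~\ref{thm:torsionss} applies and identifies $\HH_*(LG_{ss};R)$ with $\HH_*(LK;R)\otimes R[\Gamma]$ as BV algebras, with zero BV operator on $R[\Gamma]$.

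Next I will apply Theorem~\ref{thm:rigid} to $K$, which is simply connected and exterior; this gives a BV isomorphism $\HH_*(LK;R)\cong\HH_*(LX_K;R)$. Tensoring with the identity of $R[\Gamma]$ is again a BV isomorphism. The operator on the tensor product is $\Delta\otimes1$, and a map of the form $f\otimes\id$ intertwines such operators, since the second factor is in degree zero and no Koszul signs arise. I will then reassemble using $X_G=T^k\times X_K$. Lemma~\ref{lem:product}, applied to $T^k$ and $X_K$, identifies $\HH_*(LT^k;R)\otimes\HH_*(LX_K;R)$ with $\HH_*(LX_G;R)$ as BV algebras. Composing these isomorphisms gives
\[
\HH_*(LG;R)\cong\HH_*(LX_G;R)\otimes R[\Gamma],
\qquad
\Delta_G=\Delta_{X_G}\otimes1 .
\]

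The point needing most care is that the torus splitting is compatible with the factor $R[\Gamma]$. The isomorphism $\Xi$ of Theorem~\ref{thm:torsionss} lives only on the $G_{ss}$-factor. I must check that reordering $\HH_*(LT^k)\otimes(\HH_*(LX_K)\otimes R[\Gamma])$ into $(\HH_*(LT^k)\otimes\HH_*(LX_K))\otimes R[\Gamma]$ is a BV isomorphism. This holds because $R[\Gamma]$ is concentrated in degree zero, so the symmetry involves no signs, and its BV operator vanishes.

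The rational statement follows because $(G,\Q)$ is always an exterior pair, by Theorem~\ref{thm:whichpairs}(1).
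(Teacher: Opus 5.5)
Your proof is correct and follows the paper's own argument: split off the torus with Lemma~\ref{lem:product}, apply Theorem~\ref{thm:torsionss} to $G_{ss}=K/\Gamma$ and Theorem~\ref{thm:rigid} to $K$, then reassemble using $X_G=T^k\times X_K$. Your additional checks spell out what the paper leaves implicit: the freeness of $H_*(LT^k;R)$, the hypotheses of Theorem~\ref{thm:torsionss}, and the sign-free reordering of the degree-zero factor $R[\Gamma]$.
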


\begin{proof}
The exponents of $G$ are those of $K$ together with $k$
zeros, so $X_G=T^k\times X_K$. By Lemma~\ref{lem:product},
Theorem~\ref{thm:torsionss} and Theorem~\ref{thm:rigid} for
$(K,R)$,
\begin{align*}
\HH_*(LG;R)
&\cong\HH_*(LT^k;R)\otimes\HH_*(LK;R)\otimes R[\Gamma]\\
&\cong\HH_*(LT^k;R)\otimes\HH_*(LX_K;R)\otimes R[\Gamma]
\cong\HH_*(LX_G;R)\otimes R[\Gamma]
\end{align*}
as BV algebras. Over $\Q$ every
pair is exterior (Theorem~\ref{thm:whichpairs}).
\end{proof}

\begin{corollary}[Classification over a domain]\label{cor:torsionclass}
Let $R$ be a domain, and let $(G,R)$, $(G',R)$ be exterior pairs
with torsion $\Gamma$, $\Gamma'$. Then the following three
statements are equivalent:
\begin{enumerate}
\item $\HH_*(LG;R)\cong\HH_*(LG';R)$ as BV algebras;
\item $\HH_*(LG;R)\cong\HH_*(LG';R)$ as graded $R$-algebras;
\item $G$ and $G'$ have the same $R$-exponents, and
$R[\Gamma]\cong R[\Gamma']$ as $R$-algebras.
\end{enumerate}
\end{corollary}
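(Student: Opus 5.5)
The implications (3)$\Rightarrow$(1)$\Rightarrow$(2) are formal. For (3)$\Rightarrow$(1) I would use Theorem~\ref{thm:rigidtorsion}: $\HH_*(LG;R)\cong\HH_*(LX_G;R)\otimes R[\Gamma]$ as BV algebras, with $\Delta_G=\Delta_{X_G}\otimes1$. Equal exponents give $X_G=X_{G'}$. An $R$-algebra isomorphism $R[\Gamma]\cong R[\Gamma']$ in degree zero, tensored with the identity on $\HH_*(LX_G;R)$, commutes with $\Delta_{X_G}\otimes1$ on both sides. It is therefore a BV isomorphism. The implication (1)$\Rightarrow$(2) is trivial.

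The substance is (2)$\Rightarrow$(3), which I would prove along the lines of Corollary~\ref{cor:classify}. Write $A=\HH_*(LG;R)$. By Theorem~\ref{thm:rigidtorsion}, Lemma~\ref{lem:product} and Proposition~\ref{prop:spheres},
\[
A\cong R[\Gamma]\otimes R[t_1^{\pm1},\dots,t_k^{\pm1}]\otimes R[b_1,\dots,b_r]\otimes\Lambda_R(\theta_1,\dots,\theta_k,\alpha_1,\dots,\alpha_r),
\]
with the same degrees as before. Let $N$ be the ideal generated by the odd generators. It is nilpotent, and $A/N=R[\Gamma][t^{\pm1}][b]$. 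The difficulty is that $R[\Gamma]$ need not be a domain, so $N$ is no longer recognisably the nilradical. In fact $R[\Gamma]$ is reduced, because $|\Gamma|$ is a unit (Lemma~\ref{lem:gammaunit}): over the fraction field $F$, Maschke's theorem makes $F[\Gamma]$ semisimple commutative, hence a product of fields, and $R[\Gamma]\subset F[\Gamma]$. So $A/N$ is reduced, and $N$ equals the nilradical of $A$. Hence $\bar A=A/N$ and its degree-zero part $\bar A_0=R[\Gamma]\otimes R[t^{\pm1}]$ are invariants of the graded ring.

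From $\bar A_0$ I need to recover both $k$ and $R[\Gamma]$, and I expect this to be the main obstacle. For $k$: after base change to $F$, $F\otimes\bar A_0$ is a finite product of rings $L_j[t^{\pm1}]$ with $L_j$ finite field extensions of $F$. Every component therefore has Krull dimension $k$, which gives $k$.

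For $R[\Gamma]$: I would recover it as the subring of $\bar A_0$ of elements that are integral over $R$, or equivalently as the $R$-span of the torsion units. The first thing to try is the algebraic closure of $R$ inside $\bar A_0$. Elements of $R[\Gamma]$ are integral over $R$, since $\Gamma$ is finite. Conversely, an element of $R[\Gamma][t^{\pm1}]$ involving a nonconstant monomial $t^\nu$ cannot be integral over $R$. To see this, map to each factor $L_j[t^{\pm1}]$: there a nonconstant Laurent polynomial is transcendental over $F$. Hence the integral closure of $R$ in $\bar A_0$ is contained in $\bar A_0\cap\prod_j L_j=R[\Gamma]$, and the two are equal. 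Since this subring is intrinsic to the ring $A$, a graded-algebra isomorphism $A\cong A'$ induces $R[\Gamma]\cong R[\Gamma']$.

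In positive degrees, $\bar A$ is a polynomial ring over $\bar A_0$ on generators of degrees $2m_j$. The indecomposables $\bar A_+/\bar A_+^2$ are then a free $\bar A_0$-module with basis in those degrees. Its rank in each degree is well defined, because $\bar A_0\ne0$ is commutative. This recovers the positive exponents, and together with $k$ it gives the whole multiset of exponents.
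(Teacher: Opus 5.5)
Your proposal is correct and follows essentially the same route as the paper. The odd classes generate the nilradical because $R[\Gamma]\subset F[\Gamma]\cong\prod_i L_i$ is reduced; $k$ is the Krull dimension of $\bar A_0\otimes_R F$; $R[\Gamma]$ is the integral closure of $R$ in $\bar A_0$; and the positive exponents come from the free module of indecomposables of $A/N$ over $\bar A_0$. Your extra step in (3)$\Rightarrow$(1), tensoring the group-ring isomorphism with the identity on $\HH_*(LX_G;R)$, only spells out what the paper leaves to Theorem~\ref{thm:rigidtorsion}.
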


\begin{proof}
(3)$\Rightarrow$(1) follows from Theorem~\ref{thm:rigidtorsion}, while
(1)$\Rightarrow$(2) is immediate. To prove (2)$\Rightarrow$(3),
let $k$ be the number of times the exponent $0$ appears in the
list of the $r$ exponents of $G$, and let
$B=R[\Gamma][t_1^{\pm1},\dots,t_k^{\pm1}]$. Theorem~\ref{thm:rigidtorsion}
combined with Proposition~\ref{prop:spheres} then gives
\[
A:=\HH_*(LG;R)\;\cong\;B[u_{k+1},\dots,u_r]\otimes
\Lambda_R(\theta_1,\dots,\theta_k,d_{k+1},\dots,d_r)
\]
as graded algebras, where $|u_j|=2m_j$. Let $F$ denote the
quotient field of $R$. Since $|\Gamma|$ is a unit in $F$, we have
$F[\Gamma]\cong\prod_iL_i$, where each $L_i$ is a finite
extension of $F$. Therefore
$B\otimes_RF\cong\prod_iL_i[t_1^{\pm1},\dots,t_k^{\pm1}]$ is
reduced with Krull dimension $k$, and $B$, being contained in
$B\otimes_RF$, is also reduced. Hence the odd generators generate
the nilradical $N$ of $A$, and $A/N\cong B[u_{k+1},\dots,u_r]$.
Therefore an isomorphism of graded $R$-algebras $A\cong A'$
induces isomorphisms of the degree-zero components $B\cong B'$ of
the reduced rings, of the extended rings $B\otimes_RF$, and of
the indecomposables of $A/N$ over $B$, which are free on the
$u_j$. Thus we see that $k$ is the Krull dimension of
$B\otimes_RF$, the positive exponents are the degrees of the
$u_j$, and $R[\Gamma]$ is the integral closure of $R$ in $B$:
$R[\Gamma]$ is finitely generated over $R$, and an element of $B$
that is integral over $R$ is algebraic over $F$ in
$\prod_iL_i[t^{\pm1}]$, thus constant, and thus lies in
$B\cap F[\Gamma]=R[\Gamma]$.
\end{proof}

\begin{remark}[Sharpness of Theorem~\ref{thm:rigid}]
Let $R$ be a domain and $\Gamma\neq1$. Then $\HH_*(LG;R)$ and
$\HH_*(LX_G;R)$ are not isomorphic even as rings. The
degree-zero parts of their reduced rings are
$R[\Gamma][t_1^{\pm1},\dots,t_k^{\pm1}]$ and
$R[t_1^{\pm1},\dots,t_k^{\pm1}]$, and the first has zero
divisors: $(g-1)(1+g+\dots+g^{n-1})=0$ for $g$ of order $n>1$.
\end{remark}

For finite abelian $\Gamma$, $\Gamma'$, Perlis and Walker
show that $\Q[\Gamma]\cong\Q[\Gamma']$ only if
$\Gamma\cong\Gamma'$ \cite[Theorem~3 with Corollary~1]{PerlisWalker}.
For a subring $R\subseteq\Q$ the same follows on tensoring
$R[\Gamma]\cong R[\Gamma']$ with $\Q$. As
$\pi_1(G)\cong\Z^k\times\Gamma$ with $k$ the multiplicity of
the exponent $0$, Corollary~\ref{cor:torsionclass} becomes:

\begin{corollary}[The BV algebra determines $\pi_1$]\label{cor:rationalpi1}
Let $R$ be a subring of $\Q$, for instance $\Q$, $\Z[1/n]$ or
$\Z_{(p)}$, and $(G,R)$, $(G',R)$ exterior pairs. The following
are equivalent:
\begin{enumerate}
\item $\HH_*(LG;R)\cong\HH_*(LG';R)$ as BV algebras;
\item $\HH_*(LG;R)\cong\HH_*(LG';R)$ as graded algebras;
\item $G$ and $G'$ have the same $R$-exponents and
$\pi_1(G)\cong\pi_1(G')$.
\end{enumerate}
In particular this holds over $\Q$ for all connected compact
Lie groups $G$, $G'$.
\end{corollary}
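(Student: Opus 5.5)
The plan is to reduce directly to Corollary~\ref{cor:torsionclass}, since every subring $R\subseteq\Q$ is an integral domain. (1)$\Rightarrow$(2) is immediate. For (3)$\Rightarrow$(1), write $\pi_1(G)\cong\Z^k\times\Gamma$ as in Section~\ref{sub:census}; an isomorphism $\pi_1(G)\cong\pi_1(G')$ carries the torsion subgroup $\Gamma$ onto $\Gamma'$. Hence $R[\Gamma]\cong R[\Gamma']$ as $R$-algebras, and together with equal exponents this is condition (3) of Corollary~\ref{cor:torsionclass}. That corollary then gives a BV isomorphism.

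For (2)$\Rightarrow$(3), I apply Corollary~\ref{cor:torsionclass} in the direction (2)$\Rightarrow$(3). This gives equal $R$-exponents and an isomorphism $R[\Gamma]\cong R[\Gamma']$ of $R$-algebras. Tensoring over $R$ with $\Q$ gives $\Q[\Gamma]\cong\Q[\Gamma']$, because $R[\Gamma]\otimes_R\Q=\Q[\Gamma]$. By Perlis and Walker \cite{PerlisWalker}, quoted just before the statement, this forces $\Gamma\cong\Gamma'$, since both are finite abelian ($\Gamma$ is central in $K$, or alternatively a subgroup of the abelian group $\pi_1(G)$). The free rank $k$ of $\pi_1(G)$ is the multiplicity of the exponent $0$ (Section~\ref{sub:census}: $\pi_1(G)=\Z^k\times\Gamma$ with $T^k$ contributing the $k$ circle factors of $X_G$). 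Equal exponents therefore give equal $k$, so $\pi_1(G)\cong\Z^k\times\Gamma\cong\pi_1(G')$.

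For the final sentence: over $\Q$ every pair $(G,\Q)$ is exterior by Theorem~\ref{thm:whichpairs}, and $\Q$ is a subring of $\Q$, so the equivalence holds for all connected compact Lie groups.

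The only step needing care is the passage from $R$ to $\Q$ and the use of Perlis--Walker. One must check that an $R$-algebra isomorphism base-changes to a $\Q$-algebra isomorphism, which is clear from $\Q\otimes_R R[\Gamma]=\Q[\Gamma]$. One must also check that Perlis--Walker applies to finite abelian groups, which it does. Everything else is bookkeeping from earlier results.
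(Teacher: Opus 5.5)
Your proposal is correct and follows the paper's own argument. You reduce to Corollary~\ref{cor:torsionclass}, using that every subring of $\Q$ is a domain; you base-change $R[\Gamma]\cong R[\Gamma']$ to $\Q$ and apply Perlis--Walker to the finite abelian torsion subgroups; and you recover the free rank of $\pi_1(G)\cong\Z^k\times\Gamma$ as the multiplicity of the exponent $0$. You also spell out that an isomorphism $\pi_1(G)\cong\pi_1(G')$ carries torsion subgroups onto each other, a step the paper leaves implicit.
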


\noindent The covering $K\to G_{ss}$ is an isomorphism on
rational homology (Proposition~\ref{prop:covering}) and on
$\pi_n$ for $n\geq2$, and $\pi_1(G)\otimes\Q=\Q^k$: the finite
group $\Gamma$ is invisible to the rational invariants of the
space $G$. It is not invisible to the rational BV algebra of
$LG$, where it sits in $H_0(LG;\Q)=\Q[\pi_1(G)]$.

\begin{remark}[Over $\F_p$]
The BV algebra sees $\pi_1(G)$ only through the group ring
$R[\pi_1(G)]$, and in the case of $\F_p$ this carries less
information than the group itself. In fact,
$\F_p[\Z/4]\cong\F_p^4\cong\F_p[\Z/2\times\Z/2]$ when
$p\equiv1 \pmod 4$, so there exist two exterior pairs over $\F_p$
with the same exponents and non-isomorphic torsion whose BV
algebras are isomorphic. The question does not occur over $\Z$,
because then $\Gamma=1$.
\end{remark}

\begin{example}[Which groups]\label{ex:torsioncensus}
As shown by Corollary~\ref{thm:borel}, $(G,R)$ is an exterior
pair if and only if $|\Gamma|$ and the torsion primes of the
simple factors of $K$ (Proposition~\ref{prop:census}) are units
in $R$. This obviously holds when $R=\Q$. The groups with
$\Gamma\neq1$ are the quotients of $K$ by nontrivial subgroups
of its centre. When $K$ is simple, the centres are listed in
\cite[\S0.4]{Borel61}, and $(G_{ss},R)$ is an exterior pair if
and only if the primes in the rightmost column are units in
$R$:
\begin{center}
\begin{tabular}{lll}
$G_{ss}$ & $\Gamma$ & units in $R$\\[2pt]
\hline
$\SU(n)/(\Z/d)$, $d\mid n$, $d>1$; $d=n$ is $\mathrm{PU}(n)$ & $\Z/d$ & primes dividing $d$\\
$\mathrm{PSp}(n)=\Sp(n)/(\Z/2)$ & $\Z/2$ & $2$\\
$\mathrm{SO}(n)$, $n\geq3$ & $\Z/2$ & $2$\\
$\mathrm{Spin}(4m{+}2)/(\Z/4)$ & $\Z/4$ & $2$\\
$\mathrm{Spin}(4m)/H$, $1\neq H\subset(\Z/2)^2$ & $H$ & $2$\\
$E_6/(\Z/3)$ & $\Z/3$ & $2,3$\\
$E_7/(\Z/2)$ & $\Z/2$ & $2,3$
\end{tabular}
\end{center}
For $K=K_1\times\cdots\times K_s$, $\Gamma$ is any subgroup of
$\prod_iZ(K_i)$, as for $\mathrm{SO}(4)=(\SU(2)\times\SU(2))/(\Z/2)$.
A central quotient of $T^k\times K$ need not have torsion in
$\pi_1$: $\U(n)=(S^1\times\SU(n))/(\Z/n)$ has $\pi_1=\Z$.
\end{example}

\begin{remark}[Hepworth's $\mathrm{SO}(n)$]\label{rem:hepworthSO}
Hepworth proves that \cite[Theorem~4]{Hepworth}
\[
\HH_*(L\,\mathrm{SO}(2m{+}1);\Q)
=\Q[\alpha_0,\dots,\alpha_{2m-1}]/U_m\otimes
\Lambda_\Q[\beta_3,\dots,\beta_{4m-1}],
\qquad
\Delta=\sum_{i=1}^m\partial_i\otimes\delta_i,
\]
where
\[
\partial_i(\alpha_j)=
\begin{cases}
\alpha_{j-2i+1}, & j\geq 2i-1,\\
0, & \text{otherwise.}
\end{cases}
\]
In the proof of his Proposition~18, he writes
$\alpha_j=\omega\otimes\sigma_j$, with $\omega$ the generator of
$\pi_1=\Z/2$ and $\sigma_j$ Bott's generators of
$H_*(\loops_0\mathrm{SO}(2m{+}1);\Q)$. Also note that $U_m$
consists of the relation $\omega^2=1$ together with Bott's
relations times $\omega^2$. Since $\partial_i(\alpha_0)=0$, each
$\partial_i$ acts only on the $\sigma_j$. The BV operator is thus
equal to
\[
1\otimes\Delta_{\mathrm{Spin}(2m+1)},
\]
as predicted by Theorem~\ref{thm:torsionss}, while the shear of
Proposition~\ref{thm:shear} makes it a pure tensor over the
product of spheres $S^3\times\cdots\times S^{4m-1}$. Similarly,
for $\mathrm{SO}(2m{+}2)$ there is one additional generator
$\varepsilon_m=1\otimes\varepsilon$, carrying no $\omega$.
\end{remark}

\begin{remark}[Torsion primes]
For $p\mid|\Gamma|$ the pair $(G,\F_p)$ is not an exterior pair
(Remark~\ref{rem:pi1tors}): then $p_*$ is not an isomorphism.
Furthermore, in Lemma~\ref{lem:suspcomp} there exists $g$ in
$H_1(G_{ss};\F_p)=\Gamma\otimes\F_p$ whose Hurewicz image is
nonzero; therefore the BV operator acts on the group ring, as in
Hepworth's computations modulo $2$ for $\mathrm{SO}(n)$
\cite{Hepworth}.
\end{remark}

\bibliographystyle{amsplain}
\bibliography{lspbv}

\end{document}